\newtheorem{theorem}{Theorem}[section]
\newtheorem{prop}[theorem]{Proposition}
\newtheorem{lemma}[theorem]{Lemma}
\newtheorem{cor}[theorem]{Corollary}
\newtheorem{definition}[theorem]{Definition}
\theoremstyle{definition}
\newtheorem{remark}[theorem]{Remark}
\newcounter{tenumerate}
\def\P{\mathbb{P}}
\newcommand*\circled[1]{\tikz[baseline=(char.base)]{
            \node[shape=circle,draw,inner sep=2pt](char){#1};}}
\newcommand{\one}{\1}
\newcommand{\deq}{\stackrel{\scriptscriptstyle\triangle}{=}}
\renewcommand{\epsilon}{\varepsilon}
\newcommand{\1}{\mathbf{1}}
\DeclareMathOperator{\var}{Var}
\newcommand{\E}{{\mathbb E}}
\newcommand{\remove}[1]{}
\renewcommand{\leq}{\leqslant}
\renewcommand{\geq}{\geqslant}
\def\XXint#1#2#3{{\setbox0=\hbox{$#1{#2#3}{\int}$}
\vcenter{\hbox{$#2#3$}}\kern-.5\wd0}}
\def\plus{\mathsf{+}}
\def\minus{\mathsf{-}}
\def\zero{\mathsf{0}}
\title{Exponential decay of correlations in the two-dimensional random field Ising model}
\author{Jian Ding\thanks{Partially supported by NSF grant DMS-1757479 and an Alfred Sloan fellowship.}  \\ University of Pennsylvania \and Jiaming Xia\footnotemark[1]  \\ University of Pennsylvania}
\begin{document}

\maketitle

\begin{abstract}
We study random field Ising model on $\mathbb Z^2$ where the external field is given by i.i.d.\ Gaussian variables with mean zero and positive variance.  We show that  the effect of boundary conditions on the magnetization in a finite box decays exponentially in the distance to the boundary.
\end{abstract}

\section{Introduction}

For $v\in \mathbb Z^2$, let $h_v$ be i.i.d.\ Gaussian variables with mean zero and variance $\epsilon^2>0$.
We consider random field Ising model (RFIM) with external field $\{h_v: v\in \mathbb Z^2\}$ at temperature $T = 1/\beta \in [0, \infty)$.
For $N \geq 1$, let $\Lambda_N = \{v\in \mathbb Z^2: |v|_\infty \leq N\}$ be a box in $\mathbb Z^2$ centered at the origin $o$ and of side length $2N$. For any set $A\subset \mathbb Z^2$, define $\partial A = \{v\in \mathbb Z^2\setminus A: u\sim v\mbox{ for some } u\in  A\}$ (where $u\sim v$ if $|u-v|_1 =1$). The RFIM Hamiltonian $H^{\Lambda_N, \pm}$ on the configuration space $\{-1, 1\}^{\Lambda_N}$ with plus (respectively, minus) boundary condition and external field $\{h_v: v\in \Lambda_N\}$ is defined to be
\begin{equation}\label{eq-def-H}
H^{\Lambda_N,\pm}(\sigma) = - \big(\sum_{u\sim v, u, v\in \Lambda_N} \sigma_u \sigma_v  \pm \sum_{u\sim v, u\in  \Lambda_N, v\in \partial \Lambda_N} \sigma_u + \sum_{u\in \Lambda_N} \sigma_u h_u\big) \mbox{ for } \sigma \in \{-1, 1\}^{\Lambda_N}\,.
\end{equation}
(In the preceding summation, each unordered pair $u\sim v$ only appears once.)
Quenched on the external field $\{h_v\}$, the Ising measure with plus boundary condition (respectively minus boundary condition) is defined such that for all $\sigma\in \{-1, 1\}^{\Lambda_N}$ (throughout the paper the temperature is fixed, and thus we suppress the dependence on $\beta$ in all notations)
\begin{equation}
\label{eq-def-mu}
\mu^{\Lambda_N, \pm}(\sigma)=\frac{e^{-\beta H^{\Lambda_N, \pm}(\sigma)}}{Z^{\Lambda_N, \pm}}, \mbox{ where } Z^{\Lambda_N,\pm}=\sum_{\sigma'\in\{-1,1\}^{\Lambda_N}}e^{-\beta H^{\Lambda_N, \pm}(\sigma')}.
\end{equation}
Note that $\mu^{\Lambda_N, \pm}$ is a random measure which itself depends on $\{h_v\}$. To be clear of the two different sources of randomness, we use $\P$ and $\E$ to refer to the probability measure with respect to the external field $\{h_v\}$; and we use $\mu^{\Lambda_N, \pm}$ for the Ising measures and use $\langle \cdot \rangle_{\mu^{\Lambda_N, \pm}}$ to denote the expectations with respect to the Ising measures.

\begin{theorem}\label{thm-main}
For any $\epsilon >0,  T\in [0, \infty)$, there exists $c = c(\epsilon, T)>0$ such that
$$\E(\langle \sigma_o \rangle_{\mu^{\Lambda_N, +}} - \langle \sigma_o \rangle_{\mu^{\Lambda_N,-}}) \leq c^{-1} e^{-c N} \mbox{ for all } N\geq 1\,.$$
\end{theorem}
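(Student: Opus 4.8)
The plan is to work quenched with the (nonnegative) \emph{influence} $\phi_v^{(N)}:=\langle\sigma_v\rangle_{\mu^{\Lambda_N,+}}-\langle\sigma_v\rangle_{\mu^{\Lambda_N,-}}\in[0,2]$, so that it suffices to prove $\mathbb E[\phi_o^{(N)}]\le c^{-1}e^{-cN}$. By the FKG inequality $\mu^{\Lambda_N,+}\succeq\mu^{\Lambda_N,-}$, so there is a monotone coupling $(\sigma^+,\sigma^-)$ with $\sigma^+\ge\sigma^-$ pointwise and $\phi_o^{(N)}=2\,\mathbb P_{\mathrm{coup}}(\sigma^+_o\ne\sigma^-_o)$. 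The event that $o$ lies in the disagreement region forces a long ``disagreement interface'' separating $o$ from $\partial\Lambda_N$ (using planarity of $\mathbb Z^2$), and one would like a Peierls-type estimate that an interface of length $\ell$ costs $e^{-c\ell}$. The obstruction -- the reason the two-dimensional RFIM is delicate -- is that relocating the spins across such an interface changes the disorder energy $\sum_v\sigma_v h_v$ by a centred Gaussian of standard deviation of order $\epsilon\sqrt{\text{enclosed area}}$, hence of order $\epsilon\ell$ for an interface enclosing area $\asymp\ell^2$, which is comparable to the surface cost; a naive union bound over the $e^{O(\ell)}$ interfaces of a given length therefore fails, and the disorder genuinely has to be exploited rather than merely survived (indeed we expect $c(\epsilon,T)\to0$ as $\epsilon\to0$, since at $\epsilon=0$ the low-temperature Ising model has long-range order).

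The probabilistic engine, following Aizenman--Wehr and Aizenman--Peled, is that the relevant quantities are Lipschitz functions of the Gaussian field. Writing $Y_N:=\log Z^{\Lambda_N,-}-\log Z^{\Lambda_N,+}$ one has $\partial_{h_v}Y_N=-\beta\,\phi_v^{(N)}\in[-2\beta,0]$, so $Y_N$ (and, crucially, analogous log-ratios localized to sub-boxes or conditioned on a disagreement interface being present) is a coordinatewise monotone, Lipschitz function of $\{h_v\}$ with $\|\nabla Y_N\|_2^2\le 2\beta^2\sum_v\phi_v^{(N)}$. The Gaussian Poincaré inequality gives $\operatorname{Var}(Y_N)\le 2\epsilon^2\beta^2\sum_v\mathbb E[\phi_v^{(N)}]$, and Gaussian concentration upgrades this to sub-Gaussian tails. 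Morally this says that although the disorder weight of any single interface is a large random variable, the total weight of all interfaces at once is concentrated -- the combinatorial entropy is not paid twice -- so \emph{on average over the disorder} the interfaces behave as if their net cost were still $\asymp e^{-c\ell}$. I would isolate this as a quenched inequality controlling, on a box, the expected influence by the variance/mean of an appropriate free-energy difference, and conversely.

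The heart of the matter is to turn this into a recursion over scales that decays \emph{geometrically} rather than polynomially (the latter being essentially what Aizenman--Peled obtained). Fix a geometric sequence of scales $L_1<L_2<\dots<L_m$ with $L_{k+1}=2L_k$ and $L_m\asymp N$, and let $u_k:=\sup_v\mathbb E[\phi_v^{(L_k)}]$, or a suitable variant allowing mixed boundary data on $\partial\Lambda_{L_k}$. For the origin to be influenced across scale $L_{k+1}$, the disagreement must reach from $\partial\Lambda_{L_{k+1}}$ into $\Lambda_{L_k}$ and then influence $o$; decomposing on the random configuration the disagreement leaves on $\partial\Lambda_{L_k}$, and using the concentration estimate above to bound the disorder dependence of the crossing factor across the annulus $\Lambda_{L_{k+1}}\setminus\Lambda_{L_k}$, one aims at an approximate sub-multiplicativity $u_{k+1}\le C\,L_k^{O(1)}\,u_k\cdot r_k$ with $r_k<1$ for $L_k$ large -- equivalently $\mathbb E[\phi_o^{(L_1+L_2)}]\lesssim \mathrm{poly}\cdot\mathbb E[\phi^{(L_1)}]\,\mathbb E[\phi^{(L_2)}]$. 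Combined with an \emph{a priori} input that the influence tends to zero (the Aizenman--Wehr uniqueness theorem, or a crude bound proved en route), this closes the induction: once some $u_{k_0}$ is below the relevant threshold, the per-scale contraction compounds over the $\asymp N$ scales to give $\mathbb E[\phi_o^{(N)}]\le c^{-1}e^{-cN}$, uniformly in $T\in[0,\infty)$ with the $T=0$ ground-state case handled as a $\beta\to\infty$ limit (or directly).

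I expect the main obstacle to be precisely the recursion step: controlling the mixed, disorder-dependent effective boundary condition that the disagreement region imposes on $\Lambda_{L_k}$ after one annulus crossing, and extracting from the Lipschitz/concentration input a crossing penalty whose disorder-averaged value is a constant strictly below $1$ -- not merely $1+o(1)$, which would only yield a polynomial rate. Getting the planar interface geometry and the Gaussian concentration to cooperate at this point, so that the gain compounds multiplicatively over the $\Theta(N)$ scales, is where the improvement over the earlier power-law bound lies.
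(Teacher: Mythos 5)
Your reduction to a monotone coupling and your diagnosis of why naive Peierls fails are both correct, and the Lipschitz/concentration observations about $\log Z^{\Lambda_N,-}-\log Z^{\Lambda_N,+}$ are true. But the proposal has two genuine gaps. First, the arithmetic of your recursion does not produce exponential decay: with $L_{k+1}=2L_k$ and $L_m\asymp N$ there are only $\asymp\log N$ scales, not ``$\asymp N$ scales'' as you write, so a per-scale contraction factor $r_k<1$ bounded away from $1$ compounds to $r^{\log N}=N^{-c}$, i.e.\ exactly the power-law bound of Aizenman--Peled that you are trying to beat. To get $e^{-cN}$ one needs either $\Theta(N)$ genuinely independent chances or a per-annulus cost that is itself exponentially small in the annulus width; the paper gets the former by first proving polynomial decay with a \emph{large} power ($m_N\le CN^{-3}$), then coarse-graining at a fixed scale $N_0$ and running a lattice-animal/Peierls count over the $\asymp N/N_0$ boxes that a disagreement path from $o$ to $\partial\Lambda_N$ must cross (Lemma~\ref{lem-zero-enhance}); this last step also needs the percolation property that a disagreement at $o$ forces a disagreement path to the boundary, which at $T>0$ is false for a generic monotone coupling and requires constructing a specific adaptive coupling (Lemma~\ref{lem-percolate-property}).

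Second, and more fundamentally, you never identify the mechanism that makes the crossing penalty strictly less than $1$ rather than $1+o(1)$ --- you name this as ``the main obstacle'' but propose no way past it. The paper's key new input is geometric: via the Aizenman--Burchard criterion \cite{AB99} (crossing probabilities of well-separated rectangles uniformly below $1$, itself proved by a perturbative first-moment argument with field shift $\Delta\asymp 1/N$), the \emph{intrinsic} distance across an annulus of width $M$ inside the disagreement cluster is at least $M^{\alpha}$ with $\alpha>1$ (Propositions~\ref{prop-zero-crossing-dimension} and \ref{prop-crossing-dimension}). This superlinear length exponent is exactly what lets a much smaller perturbation $\Delta\asymp N^{-\alpha(\alpha')^2}\ll 1/N$ --- whose total effect on $h_{\Lambda_N}$ is $\ll N$, hence well inside the natural Gaussian fluctuation $\epsilon N$ of the field sum --- wipe out the disagreements (Lemma~\ref{lem-zero-perturbation} at $T=0$, Corollary~\ref{cor-C-*-bound} at $T>0$), which is how each dyadic scale contributes a factor $4^{-\ell(\alpha(\alpha')^2-1)}$ and the power in the polynomial bound is pushed above the threshold needed for the final coarse-graining. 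Without some substitute for this fractal-geometry input, the concentration machinery you invoke is precisely the toolkit of \cite{AP18} and cannot by itself do better than a power law.
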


This result lies under the umbrella of the general Imry--Ma \cite{IM75} phenomenon, which states that in two-dimensional systems any first order transition is rounded off upon the introduction of arbitrarily weak static, or quenched,
disorder in the parameter conjugate to the corresponding extensive quantity. In the particular case of RFIM, it was shown in \cite{AW89, AW90} that \emph{for all non-negative temperatures} the effect on the local quenched magnetization of the boundary conditions
at distance $N$ away decays to 0 as $N\to \infty$, which also implies the uniqueness of the Gibbs state. The decay rate was then improved to $1/\log\log N$ in \cite{Chatterjee18} and to $1/N^\gamma$ (for some $\gamma>0$) in \cite{AP18}. All these results apply for arbitrarily weak disorder.  In the presence of high disorder it has been shown that there is an exponential decay \cite{Ber85, FI84, CJN18} (see also \cite[Appendix A]{AP18}). The main remaining challenge is to decide whether the decay rate is exponential when the disorder is weak. In fact,  there have been debates even among physicists on whether there exists a regime where the decay rate is polynomial, and weak supporting arguments have been made in both directions \cite{GMS82, BK87, DS84} --- in particular in \cite{DS84} an argument was made for polynomial decay \emph{at zero temperature} for a certain choice of disorder. Theorem~\ref{thm-main} provides a complete answer to this question when the random field consists of i.i.d.\ Gaussian variables.

The two-dimensional behavior of RFIM is drastically different from that for dimensions three and higher: it was shown in \cite{Imbrie85} that at zero temperature
the effect on the local quenched magnetization of the boundary conditions at distance $N$ does not vanish in $N$ in the presence of weak disorder, and later an analogous result was proved  in \cite{BK88} at low temperatures. A heuristic explanation behind the different behaviors is as follows: in dimension two the fluctuation of the random field in a box is of the same order as the size of the boundary, while in dimensions three and above the fluctuation of the random field is substantially smaller than the size of the boundary.

Our proof method is different from all of \cite{AW90, Chatterjee18, AP18} (and different from \cite{Ber85, FI84, CJN18}), except that in the heuristic level our proof seems to be related to the Mandelbrot percolation analogy presented in \cite[Appendix B]{AP18}. The works \cite{AW89, AW90} treated a wide class of distributions for disorder, while \cite{Chatterjee18, AP18} and this paper work with Gaussian disorder. The main features of Gaussian distributions used in this paper are the simple formula for the change of measure (see \eqref{eq-zero-change-of-measure}) and linear decompositions for Gaussian process (see \eqref{eq-zero-Gaussian-conditioning}). In addition, we remark that the analysis in \cite{AW90, AP18} extends to the case with finite-range interactions. While we expect our framework to be useful in analyzing the finite-range case, the lack of planar duality seems to present some non-trivial obstacle.

The rest of the paper consists of two sections. In Section~\ref{sec:zero-temperature}, we prove Theorem~\ref{thm-main} in the special case of $T = 0$. In our opinion, this is a significant simplification of the general case but still captures the core challenge of the problem. We hope that some of the key ideas (e.g., the crucial application of \cite{AB99}) can be more transparent by first presenting the proof in this simplified case. In Section~\ref{sec:positive-temperature}, we then present the proof for the case of $T>0$. While the proof naturally shares the key insights with the case for $T = 0$, it seems to us that there are significant additional obstacles. As a result, the proof is not presented as an extension of the zero-temperature case. Instead, we present an almost self-contained proof, but omit details at times when they are merely adaption of arguments in Section~\ref{sec:zero-temperature}.

Our (shared) notations in Sections~\ref{sec:zero-temperature} and \ref{sec:positive-temperature} are consistent with each other, and a few notations in Section~\ref{sec:positive-temperature} are natural extensions of those in Section~\ref{sec:zero-temperature}. However, for clarity of exposition, we will recall or re-explain all notations in Section~\ref{sec:positive-temperature}.

\noindent {\bf Concurrent work.} During the submission of this paper, a paper \cite{AHP19} which proved the same result was completed. The proof of \cite{AHP19} was inspired by the proof at zero temperature in this paper (for the crucial application of \cite{AB99}). Both proofs share the basic intuition of ``using the fluctuation of the sum of the random field in a box to fight the influence of the boundary condition'' (which went back to \cite{AW89, AW90}) and both apply \cite{AB99} to disagreement percolation in a crucial manner. However, the two approaches seem to be rather different in at least the following two important aspects: (1) This paper employs first moment analysis via various perturbations of the random field, and the paper \cite{AHP19} (similar to \cite{AP18}) relies on concentration/anti-concentration type of analysis (which in particular uses second-moment computations); (2) At positive temperatures, this paper employs a certain monotone coupling (adaptive admissible coupling as in Definition~\ref{def-adaptive-admissible-coupling}) between Ising measures with different boundary conditions, and the paper \cite{AHP19} considers a continuous extension of the Ising model into the metric graph which allows to study spin correlations via disagreement percolation for two \emph{independent} samples (inspired by \cite{Sheffield05, vdBerg93}).

\section{Exponential decay at zero temperature}\label{sec:zero-temperature}

At  zero temperature, $\mu^{\Lambda_N, +}$ (and respectively $\mu^{\Lambda_N, -}$) is supported on the minimizer of \eqref{eq-def-H}, which is  known as the \emph{ground state} and is unique with probability 1. We denote by $\sigma^{\Lambda_N, +}$ the ground state with respect to the plus-boundary condition and by $\sigma^{\Lambda_N,-}$ the ground state with respect to the minus-boundary condition. Therefore, for $T=0$ we have the simplification that the only randomness is from the $\P$-measure. Thus, Theorem~\ref{thm-main} for $T=0$ can then be simplified as follows.
\begin{theorem}\label{thm-main-zero}
For any $\epsilon>0$, there exists $c = c(\epsilon)>0$ such that  $\P(\sigma^{\Lambda_N, +}_o \neq \sigma^{\Lambda_N, -}_o) \leq c^{-1} e^{-c N}$ for all $N\geq 1$.
\end{theorem}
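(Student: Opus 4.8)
The plan is to exploit planarity and the Fortuin--Kasteleyn/Griffiths-type monotonicity of ground states. Since the plus and minus boundary conditions are the two extremal choices, the ground states are ordered: $\sigma^{\Lambda_N,-}_v \le \sigma^{\Lambda_N,+}_v$ for every $v$. Hence the disagreement set $\mathcal D = \{v : \sigma^{\Lambda_N,+}_v \neq \sigma^{\Lambda_N,-}_v\}$ is exactly the set where $\sigma^{\Lambda_N,+}_v = +1$ and $\sigma^{\Lambda_N,-}_v = -1$, and the event $\{\sigma_o^{\Lambda_N,+}\neq \sigma_o^{\Lambda_N,-}\}$ is the event that the origin is connected to $\partial\Lambda_N$ through $\mathcal D$ (standard contour/duality reasoning shows disagreement is ``monotone from the boundary''). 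So it suffices to show that with probability $1 - O(e^{-cN})$ there is a ``circuit of agreement'' surrounding the origin, i.e. an annular region at some scale in which the two ground states coincide.

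First I would set up a multiscale scheme: fix a geometrically growing sequence of scales $L_k$, consider the concentric annuli $A_k = \Lambda_{L_{k+1}}\setminus \Lambda_{L_k}$, and declare the annulus $A_k$ \emph{good} if the two ground states agree on a circuit inside $A_k$. The key point is that if $A_k$ is good for \emph{any} $k$ with $L_{k+1}\le N$, then the disagreement cluster of the origin cannot reach $\partial\Lambda_N$, so $\sigma_o^{\Lambda_N,+}=\sigma_o^{\Lambda_N,-}$. Thus the whole problem reduces to showing that $\P(A_k \text{ is bad})$ is bounded by some constant $p<1$ \emph{uniformly in $k$ and in the conditioning coming from neighbouring scales}, after which independence-of-disjoint-annuli (or a suitable domination argument) yields the exponential bound $\P(\text{all }A_k\text{ bad}) \le p^{\Theta(N)}$.

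The heart of the matter is the per-scale estimate: why does a disagreement contour crossing an annulus of scale $L$ cost probability bounded away from $1$? This is exactly the ``use the fluctuation of the random field in a box to fight the boundary'' mechanism. A long disagreement interface separates a region $R$ on which the two ground states take opposite values; flipping $\sigma^{\Lambda_N,+}$ to agree with $\sigma^{\Lambda_N,-}$ inside $R$ changes the field-energy by $\pm 2\sum_{v\in R} h_v$, and changes the interaction/boundary energy by an amount governed by $|\partial R|$, which is at least of order $L$ (the interface length). By planar duality the competition is really between the interface \emph{length} and the Gaussian sum $\sum_{v\in R}h_v$, whose fluctuations are of order $\sqrt{|R|}$; crucially in $d=2$ an interface enclosing area $|R|$ has length at least $c\sqrt{|R|}$, so these two quantities are of the same order and one can hope to ``pay'' for the interface using field fluctuations with only constant-order probability cost. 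Concretely I would use the Gaussian change-of-measure formula \eqref{eq-zero-change-of-measure} together with a switching/resampling argument on the field in $R$, and — following the paper's stated strategy — invoke \cite{AB99} to control disagreement percolation and turn ``costly interface'' into ``bounded probability of a crossing contour''.

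The main obstacle I anticipate is precisely making this per-scale bound \emph{uniform} and \emph{conditionally robust}: the disagreement region $R$ is not fixed in advance but is itself determined by the random field, so one cannot naively apply a union bound over all candidate contours (there are exponentially many of length $L$, which would swamp the $e^{-cL}$ gain). Overcoming this is where the application of \cite{AB99} (a lower bound on the probability that two coupled configurations agree on a large cluster, in the spirit of the randomized-algorithm / OSSS or Russo-type inequalities used for sharp thresholds) becomes essential, and where the ``first-moment analysis via perturbations of the random field'' advertised in the introduction does the work — instead of bounding the probability of a specific bad contour, one bounds the \emph{expected influence} of local field perturbations on the event $\{\sigma_o^{\Lambda_N,+}\neq\sigma_o^{\Lambda_N,-}\}$ and converts a large influence into a large decay rate. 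Handling the geometry so that the annuli at different scales furnish genuinely independent (or negatively associated) randomness — so the $p^{\Theta(N)}$ product bound is legitimate — is the remaining technical point.
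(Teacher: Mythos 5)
Your overall architecture has a quantitative gap that makes it fail to reach the stated exponential bound. You take geometrically growing scales $L_k$ and argue that if each annulus $A_k=\Lambda_{L_{k+1}}\setminus\Lambda_{L_k}$ is ``bad'' with probability at most $p<1$, independently across scales, then $\P(\text{all bad})\le p^{\Theta(N)}$. But with geometric scales there are only $O(\log N)$ annuli inside $\Lambda_N$, so this yields at best $p^{O(\log N)}=N^{-c}$, i.e.\ \emph{polynomial} decay. This is in fact exactly the intermediate result the paper proves (Lemma~\ref{lem-zero-m-N-bound}: $m_N\le CN^{-3}$), and to get there one needs the per-scale conditional probability to be not merely a constant $p<1$ but polynomially small in the scale, $\asymp 4^{-\ell(\alpha(\alpha')^2-1)}$. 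Your energy heuristic cannot deliver that: an interface of length $\asymp\sqrt{|R|}$ balanced against a Gaussian sum of fluctuation $\asymp\sqrt{|R|}$ gives only a constant-order probability. The extra polynomial gain in the paper comes from Proposition~\ref{prop-zero-crossing-dimension}: the \emph{intrinsic} (chemical) distance across an annulus inside the disagreement set is superlinear, $\ge N^{\alpha}$ with $\alpha>1$, so by the perturbation Lemma~\ref{lem-zero-perturbation} a field shift of size $N^{-\alpha(\alpha')^2}\ll 1/N$ already destroys the disagreement, and such a shift costs only $o(1)$ in the Gaussian measure of the annulus sum (this is the content of Lemmas~\ref{lem-zero-m-star}--\ref{lem-zero-m-N-bound}, with independence across scales obtained by the explicit decomposition \eqref{eq-zero-Gaussian-conditioning}, not by disjointness of annuli alone --- the events at different scales are nested, not independent). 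Relatedly, you have misidentified what \cite{AB99} does: it is not an OSSS/influence-type inequality, but the Aizenman--Burchard theorem converting a uniform crossing bound for well-separated rectangles (Lemma~\ref{lem-zero-assumption}) into the superlinear length exponent for crossing curves; it is precisely the source of the exponent $\alpha>1$ that your sketch is missing.

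Finally, even after polynomial decay is established, exponential decay requires a separate step that your proposal omits: a coarse-graining at a \emph{fixed} large scale $N_0$ (Lemma~\ref{lem-zero-enhance}), where each $N_0$-box is ``open'' with small probability $CN_0^{-1}$ by the polynomial bound, and a disagreement path from $o$ to $\partial\Lambda_N$ forces an open lattice animal of $\Theta(N/N_0)$ boxes, giving $e^{-cN}$. Without first proving polynomial decay with a sufficiently large power, and without this renormalization step, the multiscale scheme as you describe it cannot produce an exponential rate.
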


\subsection{Outline of the proof}

We first reformulate Theorem~\ref{thm-main-zero}.
For $v\in  \Lambda_N$, we define
\begin{equation}\label{eq-zero-def-xi}
\xi_v^{\Lambda_N} =
\begin{cases}
\plus, & \mbox{ if } \sigma^{\Lambda_N, +}_v = \sigma^{\Lambda_N, -}_v = 1\,,\\
\minus, & \mbox{ if } \sigma^{\Lambda_N, +}_v = \sigma^{\Lambda_N, -}_v = -1\,,\\
\zero, & \mbox{ if }   \sigma^{\Lambda_N, +}_v = 1 \mbox{ and } \sigma^{\Lambda_N, -}_v = -1\,.
\end{cases}
\end{equation}
By monotonicity (c.f. \cite[Section 2.2]{AP18}), the case of  $\sigma^{\Lambda_N, +}_v = -1 \mbox{ and } \sigma^{\Lambda_N, -}_v = 1$ cannot occur, so $\xi_v^{\Lambda_N}$ is well-defined for all $v\in \Lambda_N$.  Theorem~\ref{thm-main-zero} can be restated as
\begin{equation}\label{eq-zero-main-result}
m_N \leq c^{-1} e^{-c N} \mbox{ for } c = c(\epsilon)>0, \mbox{ where } m_N \deq \P(\xi_{o}^{\Lambda_N}=\zero) \,.
\end{equation}
For any $A\subset \mathbb Z^2$,  we can analogously define $\xi^A$ by replacing $\Lambda_N$ with $A$ in \eqref{eq-def-H} and \eqref{eq-zero-def-xi}. Let $\mathcal C^{A} = \{v\in A: \xi_v^{A} = \zero\}$ (that is, $\mathcal C^A$ is the collection of \emph{disagreements}).
Monotonicity (see \cite[(2.7)]{AP18}) implies that
\begin{equation}\label{eq-zero-monotonicity}
\mathcal C^{B} \cap B' \subset \mathcal C^{B'} \mbox{ provided that } B' \subset B\,.
\end{equation}
In particular, this implies that $m_N$ is decreasing in $N$, so we need only consider $N=2^n$ for $n\geq 1$.
Clearly, for any $v\in \mathcal C^A$, there exists a path in $\mathcal C^A$ joining $v$ and $\partial A$. This suggests consideration of percolation properties of $\mathcal C^A$. Indeed, a key step in our proof for \eqref{eq-zero-main-result} is the following proposition on the lower bound on the length exponent for geodesics (i.e., shortest paths) in $\mathcal C^{\Lambda_N}$. For any $A\subset \mathbb Z^2$, we denote by $d_A (\cdot, \cdot)$ the intrinsic distance on $A$, i.e., the graph distance on the induced subgraph on $A$. Let $d_A (A_1, A_2) = \min_{x\in A_1 \cap A, y\in A_2 \cap A} d_A(x, y)$ (with the convention that $\min \emptyset = \infty$).
\begin{prop}\label{prop-zero-crossing-dimension}
There exist $\alpha = \alpha(\epsilon) > 1$, $\kappa = \kappa(\epsilon)>0$ such that for all $N\geq 1$
\begin{equation}\label{eq-zero-box-counting}
\P(d_{\mathcal C^{\Lambda_N}} (\partial \Lambda_{N/4}, \partial \Lambda_{N/2}) \leq N^{\alpha}) \leq \kappa^{-1} e^{- N^{\kappa}}\,.
\end{equation}
\end{prop}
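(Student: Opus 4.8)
The plan is to use the polynomial decay $m_\ell\le C\ell^{-\gamma}$ of \cite{AP18} to drive the multiscale machinery of \cite{AB99}, obtaining that \emph{disagreement crossings are long} (any disagreement path crossing an annulus uses a number of vertices super-linear in the aspect ratio), and then to boost the resulting polynomial estimate to the stated stretched-exponential one by a renormalization. Two structural facts make ``disagreement percolation'' at $T=0$ tractable here. First, monotonicity \eqref{eq-zero-monotonicity}: $\mathcal C^{\Lambda_N}\cap A\subset\mathcal C^{A}$ for every box $A\subset\Lambda_N$. Second, \emph{locality}: $\mathcal C^{A}$ is a deterministic function of $\{h_v:v\in A\}$, so $\mathcal C^{A_1},\dots,\mathcal C^{A_k}$ are independent whenever $A_1,\dots,A_k$ are pairwise disjoint. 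With translation invariance, these reduce any statement about crossings of $\mathcal C^{\Lambda_N}$ inside a sub-box $A$ to one about the local set $\mathcal C^{A}$, and render events anchored in well-separated boxes independent. Since $\mathcal C$ is connected, the event in \eqref{eq-zero-box-counting} is exactly that some disagreement path joins $\partial\Lambda_{N/4}$ and $\partial\Lambda_{N/2}$ using at most $N^\alpha$ vertices, and it is this probability I will bound.

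The first ingredient is a single-scale arm estimate. Writing $\Lambda_r(x)=\{v:|v-x|_\infty\le r\}$, a disagreement crossing of the annulus $\Lambda_L(x)\setminus\Lambda_\ell(x)$ forces, by monotonicity and translation, a disagreement at graph distance $\gtrsim L$ from the boundary of an ambient box of side $\asymp L$, so by \cite{AP18} its probability is at most $CL^{-\gamma}\le C(\ell/L)^{\gamma}$ (using $\ell\ge1$). Upgrading this to a bound of the form $C^k(\ell/L)^{\rho k}$ on the probability of $k$ disjoint such crossings — via a planarity/sectoring argument that cuts the annulus into $\asymp k$ angular sectors, applies the single-sector estimate, uses independence of the local disagreement sets on mildly separated sectors, and handles ``bunched'' configurations by recursion on $k$ — supplies the input required by \cite{AB99}. (Alternatively, this $k$-arm bound can be read off from an Aizenman--Wehr type first-moment argument: $k$ disjoint disagreement crossings demand $k$ nearly independent favorable fluctuations of the random field.) Invoking \cite{AB99} then yields $\delta=\delta(\epsilon)>0$ and $\eta=\eta(\epsilon)>0$ such that, outside an event of probability at most $C(\ell/L)^{\eta}$, every disagreement crossing of $\Lambda_L(x)\setminus\Lambda_\ell(x)$ uses at least $(L/\ell)^{1+\delta}$ vertices.

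To reach a stretched-exponential bound I would renormalize. Fix a small $\kappa_0=\kappa_0(\epsilon)>0$, put $\ell=N^{1-\kappa_0}$, and look at the concentric shells $\partial\Lambda_{N/4+j\ell}$ for $0\le j\le M$ with $M\asymp N^{\kappa_0}$. A path $P$ crossing $\Lambda_{N/2}\setminus\Lambda_{N/4}$ contains, for each $j$, a sub-path $\gamma_j$ lying in the $j$-th annular strip and joining its two bounding shells; the $\gamma_j$ are edge-disjoint, so $\sum_j|\gamma_j|\le|P|$, and each $\gamma_j$, having $|\cdot|_\infty$-diameter $\ge\ell$, induces a macroscopic crossing (connecting two sides) of some axis-parallel square of side $\ell/2$ inside its strip — of which there are only $O(N^{\kappa_0})$. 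If $|P|\le N^{\alpha}$ for a suitable $\alpha\in(1,\,1+(1-\kappa_0)\delta/2)$, an averaging argument forces at least half of the $\gamma_j$ to have $|\gamma_j|<\ell^{1+\delta/2}$; for each such $j$, the local disagreement set of the enlarged square then makes a macroscopic crossing of that square with fewer than $(\ell/2)^{1+\delta}$ vertices, an event of probability at most $O(N^{\kappa_0})\cdot\ell^{-\eta}=N^{-\theta}$ for some $\theta=\theta(\epsilon)>0$ once $\kappa_0$ is small. Passing to a subfamily of $\gtrsim M$ strips with pairwise disjoint enlarged squares (possible as the strips are radially ordered), these events become independent, so a union bound over which $\gtrsim N^{\kappa_0}$ strips are ``short'' gives a bound of order $2^{M}(N^{-\theta})^{\Omega(M)}\le e^{-N^{\kappa_0}}$ for large $N$, which is \eqref{eq-zero-box-counting} with $\kappa=\kappa_0$ (small $N$ being absorbed into $\kappa^{-1}$).

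The hard part is the second step, and within it the passage from the single arm of \cite{AP18} to a genuine $k$-arm bound with a $k$-linear exponent: because the ground state is a global object, $k$ disjoint disagreement crossings do not split into independent pieces for free, and one must exploit monotonicity and the locality of $\mathcal C^{A}$ on disjoint regions with care — in particular, configurations in which many disjoint crossings are squeezed into a narrow angular window require a separate recursive treatment. Extracting the precise quantitative ``crossings are long'' statement from \cite{AB99} is a secondary but non-routine point; by contrast the geometric bookkeeping in the renormalization step is standard.
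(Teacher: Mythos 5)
Your overall architecture matches the paper's: verify a crossing hypothesis for the disagreement set, feed it into \cite[Theorem 1.3]{AB99} to obtain \eqref{eq-zero-box-counting-weak}, then renormalize at scale $N^{1-\kappa_0}$ to boost the $o(1)$ bound to a stretched exponential. Your renormalization step is essentially the paper's Lemma~\ref{lem-zero-enhance} together with the concluding argument of Section~\ref{sec-zero-crossing-dimension}, and it is fine.

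The gap is in your first two ingredients, which is exactly where the paper's real work lies. First, the single-scale arm estimate you extract from \cite{AP18} does not follow as stated: $m_L\le CL^{-\gamma}$ bounds the probability that a \emph{fixed} vertex at depth $L$ is a disagreement, whereas the event that $\Lambda_L(x)\setminus\Lambda_\ell(x)$ is crossed requires a union bound over the $\asymp\ell$ vertices of the inner boundary (or over a cross-section of length $\asymp\ell_A$, for a rectangle). This costs a factor $\ell$, and since the exponent $\gamma$ from \cite{AP18} is an unspecified small constant (certainly not known to exceed $1$), the resulting bound $C\,\ell\,L^{-\gamma}$ is not even $o(1)$ once $\ell$ is a power of $L$, let alone of the form $C(\ell/L)^{\rho}$. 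In particular, \cite{AP18} does not yield that the crossing probability of a rectangle of aspect ratio $100$ is bounded away from $1$ uniformly in the scale. Second, the upgrade to a $k$-arm bound with exponent linear in $k$ is precisely the step you flag as the hard part and leave as a sketch (``sectoring'', ``recursion on $k$'', an ``Aizenman--Wehr type first-moment argument''); there is no BK inequality for the ground-state disagreement set, and nothing in the outline substitutes for it. Note also that the input \cite[Theorem 1.3]{AB99} actually needs is not a $k$-arm bound for disjoint crossings of a single annulus but the product bound for crossings of $k$ rectangles with disjoint enlargements (the paper's Lemma~\ref{lem-zero-assumption}), and that bound follows for free from a \emph{single}-rectangle estimate because $\mathcal C^{A^{\mathrm{Large}}}$ is a local function of the field, so disjoint enlargements give independence. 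The entire difficulty therefore collapses to the single-rectangle estimate \eqref{eq-zero-crossing-prob}, and the paper proves it by a mechanism absent from your proposal: the perturbative first-moment argument of Lemmas~\ref{lem-zero-perturbation}--\ref{lem-zero-bound-hard-crossing}, which shifts the field upward by $\Delta=\gamma/N$, compares the ``enhanced'' disagreements in $\Lambda_{N/32}$ with the disagreements in $\mathcal A_{N/2}$, and derives a contradiction with Lemma~\ref{lem-zero-perturbation} unless the hard or easy crossing probability of $\Lambda_{N/8}\setminus\Lambda_{N/32}$ is bounded away from $1$. You need either this argument or a genuine replacement for it; as written, the proposal does not close.
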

The proof of Proposition~\ref{prop-zero-crossing-dimension} will rely on \cite{AB99}, which takes  the next lemma as input. For any rectangle $A\subset \mathbb R^2$ (whose sides are not necessarily parallel to the axes), let $\ell_A$ be the length of the longer side and let $ A^{\mathrm{Large}}$ be (the lattice points of) the square box concentric with $A$, of side length $32\ell_A$ and with sides parallel to axes. In addition, define the aspect ratio of $A$ to be the ratio between the lengths of the longer and shorter sides. For a (random) set $\mathcal C \subset \mathbb Z^2$, we use $\mathrm{Cross}(A, \mathcal C)$ to denote the event that there exists a path $v_0, \ldots, v_k \in A \cap \mathcal C$ connecting the two shorter sides of $A$ (that is,  $v_0, v_k$ are of $\ell_\infty$-distances less than 1 respectively from the two shorter sides of $A$).
\begin{lemma}\label{lem-zero-assumption}
Write $a = 100$. There exists $\ell_0 = \ell_0(\epsilon)$ and $\delta = \delta(\epsilon) >0$ such that the following holds for any $N\geq 1$. For any $k\geq 1$ and any rectangles $A_1, \ldots, A_k \subseteq \{v\in \mathbb R^2: |v|_\infty \leq N/2\}$ with aspect ratios at least $a$ such that  (a) $\ell_0\leq \ell_{A_i} \leq N/32$ for all $1\leq i\leq k$ and (b) $A^{\mathrm{Large}}_1, \ldots, A^{\mathrm{Large}}_k$ are disjoint, we have
$$\P(\cap_{i=1}^k \mathrm{Cross}(A_i, \mathcal C^{\Lambda_N})) \leq (1-\delta)^k\,.$$
\end{lemma}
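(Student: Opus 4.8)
The plan is to localize each crossing event inside the corresponding enlarged box, reduce to a single rectangle, and then prove a one-rectangle bound by a Gaussian-fluctuation argument.

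\emph{Reduction to one rectangle.} By \eqref{eq-zero-monotonicity} applied with $B'=A_i^{\mathrm{Large}}\subseteq B=\Lambda_N$ we have $\mathcal C^{\Lambda_N}\cap A_i^{\mathrm{Large}}\subseteq\mathcal C^{A_i^{\mathrm{Large}}}$; the inclusion $A_i^{\mathrm{Large}}\subseteq\Lambda_N$ holds because $A_i\subseteq\{|v|_\infty\leq N/2\}$ and $\ell_{A_i}\leq N/32$, so every point of $A_i^{\mathrm{Large}}$ is within $\ell_\infty$-distance $16\ell_{A_i}\leq N/2$ of the center of $A_i$, which itself has $\ell_\infty$-norm at most $N/2$. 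Since a crossing path witnessing $\mathrm{Cross}(A_i,\mathcal C^{\Lambda_N})$ lies in $A_i\subseteq A_i^{\mathrm{Large}}$, it also witnesses $\mathrm{Cross}(A_i,\mathcal C^{A_i^{\mathrm{Large}}})$, so $\mathrm{Cross}(A_i,\mathcal C^{\Lambda_N})\subseteq\mathrm{Cross}(A_i,\mathcal C^{A_i^{\mathrm{Large}}})$. The latter event depends only on $\{h_v:v\in A_i^{\mathrm{Large}}\}$ (the two ground states on $A_i^{\mathrm{Large}}$ are determined by the field inside it), and the $A_i^{\mathrm{Large}}$ are disjoint by hypothesis (b), so these events are independent and
$$\P\Big(\bigcap_{i=1}^k\mathrm{Cross}(A_i,\mathcal C^{\Lambda_N})\Big)\leq\prod_{i=1}^k\P\big(\mathrm{Cross}(A_i,\mathcal C^{A_i^{\mathrm{Large}}})\big).$$
It therefore suffices to produce $\ell_0,\delta>0$ such that $\P(\mathrm{Cross}(A,\mathcal C^{A^{\mathrm{Large}}}))\leq 1-\delta$ for every rectangle $A$ of aspect ratio at least $100$ with longer side $\ell_A\geq\ell_0$; the lemma then follows with this $\delta$ and $\ell_0$.

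\emph{The single-rectangle bound.} Put $B=A^{\mathrm{Large}}$, a square of side $32\ell_A$ concentric with $A$. At zero temperature the minus-region of $\sigma^{B,+}$ and the plus-region of $\sigma^{B,-}$ are the minimizers over subsets $R\subseteq B$ of the submodular functionals $R\mapsto|\partial R|\pm\sum_{v\in R}h_v$ (with $|\partial R|$ counting also the edges from $R$ to $\partial B$), and $\mathcal C^B$ is the complement of the union of these two regions. On $\mathrm{Cross}(A,\mathcal C^B)$ this complement contains a connected cluster $W$ joining the two short sides of $A$, and, since every cluster of $\mathcal C^B$ reaches $\partial B$, the cluster $W$ is a spatially extended object confined to $B$ whose edge boundary has size at least comparable to $\ell_A$. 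Comparing the energy of each ground state with that of the configuration obtained by flipping $W$ (this changes the minus- or plus-region by $W$, up to a perimeter correction, so it is a legitimate competitor) squeezes $\sum_{v\in W}h_v$ into an interval whose width is controlled by $|\partial W|$. The quantitative input is a Gaussian perturbation: one shifts $\{h_v\}$ by a small amount $t$ of order $1/(\text{shorter side of }A)$ on a bounded-aspect-ratio square sub-block $S$ of $A$ that every hard-way crossing of $A$ must traverse. Because $t^2|S|$ and $t|S|$ times the field average over $S$ are both $O_\epsilon(1)$, the Radon--Nikodym derivative between the shifted and original field laws has $O_\epsilon(1)$ fluctuations — this is where the explicit Gaussian change-of-measure formula and the Gaussian conditioning/linear-decomposition identities enter. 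Under the shifted measure the plus-region of $\sigma^{B,-}$ is pushed so as to contain a barrier across $A$ inside $S$ with probability bounded below, which destroys every hard-way crossing; feeding this through the usual change-of-measure inequality converts an order-one lower bound (under the shifted measure) for ``no hard-way crossing'' into an order-$\delta(\epsilon)$ lower bound for $\P(\text{no hard-way crossing})$.

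\emph{Main obstacle.} The delicate point is the last step: a modest perturbation on an $O(1)$-aspect block does not pin a barrier deterministically, so one must show that it genuinely raises the probability of a blocking barrier by a fixed amount, i.e.\ one must control the competition between the entropy of the possible disagreement interfaces and the energy/fluctuation cost of such a configuration (a naive union bound over interfaces is far too lossy). I expect this to require decomposing the hard-way crossing into crossings of mesoscopic sub-squares, and/or conditioning on the field outside a mesoscopic region and resampling inside — using \eqref{eq-zero-monotonicity} once more to localize the crossing — together with careful Gaussian estimates. Since \cite{AB99} is the downstream consumer of this lemma rather than an available tool here, this part must be carried out by hand and is where the main work lies.
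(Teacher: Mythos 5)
Your reduction to a single rectangle is correct and is exactly what the paper does: $\mathrm{Cross}(A_i,\mathcal C^{\Lambda_N})\subseteq\mathrm{Cross}(A_i,\mathcal C^{A_i^{\mathrm{Large}}})$ by \eqref{eq-zero-monotonicity}, the latter events are measurable with respect to the fields on the disjoint boxes $A_i^{\mathrm{Large}}\subseteq\Lambda_N$ and hence independent, so everything rests on the single-rectangle bound \eqref{eq-zero-crossing-prob}. The problem is that you do not actually prove that bound. Your second paragraph sketches a mechanism --- perturb the field on one bounded-aspect sub-block $S$ of $A$ so that a blocking barrier appears inside $S$ with probability bounded below --- and your closing paragraph concedes that you cannot establish the key step (that an $O(1/\ell_A)$ shift on an $O(\ell_A)\times O(\ell_A)$ block raises the probability of a barrier by a fixed amount), correctly observing that a union bound over interfaces is far too lossy. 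That step is the entire content of the lemma, so as written the proposal has a genuine gap.

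It is worth noting that the paper avoids the barrier problem altogether; it never tries to force a specific blocking interface. Instead it runs a first-moment counting argument. One perturbs the field on the \emph{whole} box by $\Delta=\gamma/N$ and uses the deterministic dichotomy of Lemma~\ref{lem-zero-perturbation}: if $d_{\mathcal C_*^{\Lambda_N}}(\partial\Lambda_{N/4},\partial\Lambda_{N/2})\geq K$ (which holds trivially with $K=N/4$ since intrinsic distance dominates $\ell_1$-distance), then $|\mathcal C_*^{\Lambda_N}\cap\Lambda_{N/4}|\cdot\Delta\leq\frac{8}{K}|\mathcal C_*^{\Lambda_N}\cap\mathcal A_{N/2}|$. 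A covering/independence argument (the annulus $\mathcal A_{N/2}$ is covered by $r$ translates of $\Lambda_{N/32}$ whose $4$-fold enlargements are disjoint from $\Lambda_{N/8}$) shows that with probability at least $1/4r$ the number of ``enhanced'' disagreements in $\Lambda_{N/32}$ (with boundary conditions on $\partial\Lambda_{N/8}$) exceeds $r^{-1}$ times the disagreement count in $\mathcal A_{N/2}$; choosing $\gamma=100r$ makes this contradict the dichotomy unless some enhanced disagreement fails to be a disagreement at scale $N$, and planar duality then rules out a hard crossing of the annulus $\Lambda_{N/8}\setminus\Lambda_{N/32}$ in $\mathcal C^{\Lambda_N}$ (or in its perturbed version, handled by the change-of-measure Lemma~\ref{lem-zero-perturbation-contiguous}). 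Finally, \eqref{eq-zero-crossing-prob} follows by arranging four rotated copies of $A$ whose crossings would force such a hard annulus crossing (and two copies for the easy crossing, to handle the degenerate case $\theta=0$). If you want to salvage your write-up, you should replace your barrier heuristic with this counting scheme, or supply a genuinely new proof of the step you flagged as the main obstacle.
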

(Actually, the authors of \cite{AB99} treated random curves in $\mathbb R^2$. However, the main capacity analysis can be copied in the discrete case, and the connection between the capacity and the box-counting dimension is straightforward (c.f. \cite[Lemma 2.3]{DT16}).) Armed with Lemma~\ref{lem-zero-assumption}, we can apply \cite[Theorem 1.3]{AB99} to deduce that for some $\alpha = \alpha(\epsilon)>1$,
\begin{equation}\label{eq-zero-box-counting-weak}
\P(d_{\mathcal C^{\Lambda_N}} (\partial \Lambda_{N/4}, \partial \Lambda_{N/2}) \leq N^{\alpha}) \to 0 \mbox{ as } N \to \infty\,.
\end{equation}
By a standard percolation argument (Lemma~\ref{lem-zero-enhance}) which we will explain later, we can enhance the probability decay in \eqref{eq-zero-box-counting-weak} and prove \eqref{eq-zero-box-counting}.

By \eqref{eq-zero-monotonicity},
the random set
$\mathcal C^{\Lambda_N} \cap A$ is stochastically dominated by  $\mathcal C^{A^{\mathrm{Large}}} \cap A$ as long as $A^{\mathrm{Large}}\subset \Lambda_N$. Moreover, it is obvious that $\mathcal C^{A^{\mathrm{Large}}_i}$ for $1\leq i\leq k$ are mutually independent, as long as the sets $A_i^{\mathrm{Large}}$ for $1\leq i\leq k$ are disjoint. Therefore, in order to prove Lemma~\ref{lem-zero-assumption}, it suffices to show that for any rectangle $A$ with aspect ratio at least $a = 100$ we have

\begin{equation}\label{eq-zero-crossing-prob}
\P(\mathrm{Cross}(A, \mathcal C^{A^{\mathrm{Large}}})) \leq 1- \delta \mbox{ where } \delta=  \delta(\epsilon) >0\,.
\end{equation}

Both the proof of \eqref{eq-zero-crossing-prob} and the application of \eqref{eq-zero-box-counting} rely on a perturbative analysis, which is another key feature of our proof. Roughly speaking, the logic is as follows:
\begin{itemize}
\item We first consider the perturbation by increasing the field by an amount of order $1/N$, and use this to show that the probability for a $\zero$-valued contour surrounding an annulus is strictly bounded away from 1.
\item Based on this property, we prove  \eqref{eq-zero-crossing-prob}, which then implies \eqref{eq-zero-box-counting}.
\item Given \eqref{eq-zero-box-counting}, we then show that increasing the field by an amount of order $1/N^\alpha$ (recall that $\alpha>1$ is from Proposition~\ref{prop-zero-crossing-dimension} and thus the perturbation here is $1/N^\alpha \ll 1/N$) will most likely change the $\zero$'s to $\plus$'s. Based on this, we prove polynomial decay for $m_N$ with large power, which can then be enhanced to exponential decay.
\end{itemize}
For compactness of exposition, the actual implementation will differ slightly from the above plan:
\begin{itemize}
\item We first prove a general perturbation result (Lemma~\ref{lem-zero-perturbation}) in Section~\ref{sec-zero-perturbation}, where the size of perturbation is related to the intrinsic distance on $\mathcal C^{\Lambda_N}$.
\item In Section~\ref{sec-zero-crossing-dimension}, we apply Lemma~\ref{lem-zero-perturbation} by  bounding  $d_{\mathcal C^{\Lambda_N}}$ from below by the $\ell_1$-distance and correspondingly setting the perturbation amount to $1/N$, thereby proving  Lemma~\ref{lem-zero-bound-hard-crossing}. As a consequence, we verify \eqref{eq-zero-crossing-prob}.
\item In Section~\ref{sec-zero-main-thm}, we apply Lemma~\ref{lem-zero-perturbation} again by applying a lower bound on  $d_{\mathcal C^{\Lambda_N}}$ from Proposition~\ref{prop-zero-crossing-dimension}. This allows us to derive Lemma~\ref{lem-zero-m-star}. As a consequence, we prove in Lemma~\ref{lem-zero-m-N-bound}  polynomial decay for $m_N$ with large power, which is then  enhanced to exponential decay by a standard argument.
\end{itemize}

\subsection{A perturbative analysis}\label{sec-zero-perturbation}

We first introduce some notation. For $A\subseteq \mathbb Z^2$, we set $h_A = \sum_{v\in A} h_v$. For $A, B \subset \mathbb Z^2$, we denote by $E(A, B) = \{\langle u, v\rangle: u\sim v, u\in A, v\in B\}$. Note that we treat $\langle u, v\rangle$ as an ordered edge. For simplicity, we will only consider $N = 2^n$ for $n\geq 10$. Let $\mathcal A_N = \Lambda_N \setminus \Lambda_{N/2}$ be an annulus.   Define $\{\tilde h^{(N)}_v: v\in \Lambda_N\}$ to be a perturbation of the original field parameterized by $\Delta>0$, as follows:
\begin{equation}\label{eq-zero-def-tilde-h}
\tilde h^{(N)}_v  =
h_v + \Delta \mbox{ for } v\in \Lambda_N\,.
\end{equation}
We will use  $\tilde H^{\Lambda_N, \pm}(\sigma)$, $\tilde \sigma^{\Lambda_N, \pm}$, $\tilde \xi^{\Lambda_N}$, $\tilde {\mathcal C}^{\Lambda_N}$ to denote the corresponding tilde versions of $H^{\Lambda_N, \pm}(\sigma)$, $\sigma^{\Lambda_N, \pm}$, $\xi^{\Lambda_N}$, ${\mathcal C}^{\Lambda_N}$ respectively, i.e., defined analogously but with respect to the field $\{\tilde h^{(N)}_v\}$. In addition, define $\mathcal C_*^{\Lambda_N} = \tilde {\mathcal C}^{\Lambda_N} \cap  {\mathcal C}^{\Lambda_N}$ (so $\mathcal C_*^{\Lambda_N}$ is the intersection of disagreements with respect to the original and the perturbed field; in informal discussions we will refer to vertices in $\mathcal C_*^{\Lambda_N}$ as disagreements too).
\begin{lemma}\label{lem-zero-perturbation}
Consider $K, \Delta > 0$. Define $\{\tilde h^{(N)}_v: v\in \Lambda_N\}$
as in \eqref{eq-zero-def-tilde-h}. The following two conditions cannot hold simultaneously:

(a) $d_{\mathcal C_*^{\Lambda_N}}(\partial \Lambda_{N/4}, \partial \Lambda_{N/2}) \geq K$;

(b) $|\mathcal C_*^{\Lambda_N} \cap \Lambda_{N/4}| \cdot \Delta> \frac{8}{K} |\mathcal C_*^{\Lambda_N} \cap \mathcal A_{N/2}|$.
\end{lemma}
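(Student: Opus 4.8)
The plan is to argue by contradiction through a localized energy comparison between the unperturbed plus ground state $\sigma^{\Lambda_N,+}$ and the perturbed minus ground state $\tilde\sigma^{\Lambda_N,-}$, carried out on a cleverly chosen subset $F$ of $\mathcal C_*^{\Lambda_N}$. Throughout write $D_* = \mathcal C_*^{\Lambda_N}$; by definition $\sigma^{\Lambda_N,+}\equiv +1$ and $\tilde\sigma^{\Lambda_N,-}\equiv -1$ on $D_*$. I would assume that (a) holds and deduce $|D_*\cap\Lambda_{N/4}|\cdot\Delta \le \frac{8}{K}|D_*\cap\mathcal A_{N/2}|$, which is exactly the negation of (b). (Since $d_{D_*}(\cdot,\cdot)$ is integer-valued we may assume $K\in\mathbb N$; the degenerate cases in which $D_*\cap\Lambda_{N/4}=\emptyset$ or the intrinsic distance in (a) is infinite are immediate and I will not dwell on them.)

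\emph{Step 1 (local energy inequality).} First I would record a monotonicity fact: since increasing the external field only increases the ground state, $\tilde\sigma^{\Lambda_N,\pm}\ge\sigma^{\Lambda_N,\pm}$ pointwise, and hence $\sigma^{\Lambda_N,+}_v\le\tilde\sigma^{\Lambda_N,-}_v$ for every $v\notin D_*$ — indeed, if $v\notin\mathcal C^{\Lambda_N}$ then $\sigma^{\Lambda_N,+}_v=\sigma^{\Lambda_N,-}_v\le\tilde\sigma^{\Lambda_N,-}_v$, while if $v\notin\tilde{\mathcal C}^{\Lambda_N}$ then $\tilde\sigma^{\Lambda_N,-}_v=\tilde\sigma^{\Lambda_N,+}_v\ge\sigma^{\Lambda_N,+}_v$. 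Now fix any $F\subseteq D_*$ with $F\subseteq\Lambda_{N/2}$ (so $F$ touches no edge of $\partial\Lambda_N$). Flipping every spin of $F$ in $\sigma^{\Lambda_N,+}$ (turning $+1$'s into $-1$'s) and flipping every spin of $F$ in $\tilde\sigma^{\Lambda_N,-}$ (turning $-1$'s into $+1$'s) each produce another configuration; the internal edges of $F$ contribute nothing to the energy change and there are no boundary edges, so minimality of $\sigma^{\Lambda_N,+}$ for $H^{\Lambda_N,+}$ and of $\tilde\sigma^{\Lambda_N,-}$ for $\tilde H^{\Lambda_N,-}$ read, after dividing by $2$ and using $\tilde h_v=h_v+\Delta$,
\[\sum_{\langle u,v\rangle\in E(F,\Lambda_N\setminus F)}\sigma^{\Lambda_N,+}_v + h_F\ge 0,\qquad \sum_{\langle u,v\rangle\in E(F,\Lambda_N\setminus F)}\tilde\sigma^{\Lambda_N,-}_v + h_F + |F|\Delta\le 0.\]
Multiplying the second by $-1$ and adding it to the first gives $|F|\Delta\le\sum_{\langle u,v\rangle\in E(F,\Lambda_N\setminus F)}(\sigma^{\Lambda_N,+}_v-\tilde\sigma^{\Lambda_N,-}_v)$, and since $\sigma^{\Lambda_N,+}_v-\tilde\sigma^{\Lambda_N,-}_v\le 2$ always and $\le 0$ when $v\notin D_*$, this collapses to the clean bound $|F|\,\Delta\le 2\,|E(F,D_*\setminus F)|$.

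\emph{Step 2 (choice of $F$ by layering).} Let $L_0=D_*\cap\Lambda_{N/4}$, let $L_j=\{v\in D_*:d_{D_*}(v,L_0)=j\}$ for $j\ge 1$, and $F_j=L_0\cup\cdots\cup L_j$. The key geometric claim is that under (a) one has $L_j\subseteq\mathcal A_{N/2}$ for every $1\le j\le K$: a path of length $j\le K$ in $D_*$ from $L_0\subseteq\Lambda_{N/4}$ to a vertex outside $\Lambda_{N/2}$ would contain a sub-path lying in $D_*$, of length at most $j-1\le K-1$, joining $\partial\Lambda_{N/4}$ to $\partial\Lambda_{N/2}$, contradicting (a); and a vertex of $L_j$ with $j\ge 1$ lies in $D_*$ but not in $\Lambda_{N/4}$ (else it would be in $L_0$), hence in $\Lambda_{N/2}\setminus\Lambda_{N/4}=\mathcal A_{N/2}$. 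Thus $L_1,\dots,L_K$ are disjoint subsets of $D_*\cap\mathcal A_{N/2}$, so by pigeonhole there is $j\in\{1,\dots,K\}$ with $|L_j|\le\frac1K|D_*\cap\mathcal A_{N/2}|$. For this $j$ we have $F_j\subseteq\Lambda_{N/2}$ and $|F_j|\ge|L_0|=|D_*\cap\Lambda_{N/4}|$, and moreover every ordered edge of $E(F_j,D_*\setminus F_j)$ has its tail in $L_j$ (adjacent vertices of $D_*$ have $d_{D_*}(\cdot,L_0)$ differing by at most $1$), so $|E(F_j,D_*\setminus F_j)|\le 4|L_j|$.

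\emph{Step 3 (conclusion) and the main difficulty.} Applying the bound of Step 1 with $F=F_j$ and chaining the estimates of Step 2,
\[|\mathcal C_*^{\Lambda_N}\cap\Lambda_{N/4}|\cdot\Delta\;\le\;|F_j|\,\Delta\;\le\;2\,|E(F_j,D_*\setminus F_j)|\;\le\;8|L_j|\;\le\;\frac8K\,|\mathcal C_*^{\Lambda_N}\cap\mathcal A_{N/2}|,\]
which is precisely the negation of (b). I expect the crux — and the step most likely to go wrong in a naive attempt — to be the energy comparison of Step 1: the point is that flipping the \emph{same} set $F$ in $\sigma^{\Lambda_N,+}$ and in $\tilde\sigma^{\Lambda_N,-}$ makes all interaction terms collapse into one sum of $\sigma^{\Lambda_N,+}_v-\tilde\sigma^{\Lambda_N,-}_v$ over the edge boundary of $F$, which is then controlled \emph{purely} by the fact that $\sigma^{\Lambda_N,+}\le\tilde\sigma^{\Lambda_N,-}$ off $\mathcal C_*^{\Lambda_N}$. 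Step 2 is a routine isoperimetric pigeonhole; its only delicate point is the geometric claim that the first $K$ layers cannot escape $\Lambda_{N/2}$, which is exactly where hypothesis (a) is used.
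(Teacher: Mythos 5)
Your proof is correct and is essentially the paper's argument in contrapositive form: both layer $\mathcal C_*^{\Lambda_N}$ by intrinsic distance, pigeonhole a thin layer of size at most $K^{-1}|\mathcal C_*^{\Lambda_N}\cap\mathcal A_{N/2}|$, and compare the energy cost of flipping the same set in $\sigma^{\Lambda_N,+}$ and in $\tilde\sigma^{\Lambda_N,-}$, with monotonicity controlling the boundary contribution. Your bookkeeping via the single inequality $\sigma^{\Lambda_N,+}_v\le\tilde\sigma^{\Lambda_N,-}_v$ off $\mathcal C_*^{\Lambda_N}$ is a streamlined version of the paper's case analysis with the sets $g(S,\tau)$, $\tilde g(S,\tau)$, but the mechanism is identical.
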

\begin{proof}
Suppose otherwise both (a) and (b) hold. Let  $B_k = \{v\in \mathcal A_{N/2}: d_{\mathcal C_*^{\Lambda_N}}(\partial \Lambda_{N/4}, v) = k\}$, for $k=1, \ldots, K$. Note that $B_k \subset \mathcal C_*^{\Lambda_N} \cap \mathcal A_{N/2}$ for all $1\leq k\leq K$ by (a). It is obvious that the $B_k$'s are disjoint from each other, and thus there exists a minimal value $k_*$ such that
\begin{equation}\label{eq-zero-B-*}
|B_{k_*}| \leq K^{-1}| \mathcal C_*^{\Lambda_N} \cap \mathcal A_{N/2}|\,.
\end{equation}
 Let
$$S =(\mathcal C_*^{\Lambda_N} \cap \Lambda_{N/4} ) \cup \cup_{k=1}^{k^*-1} B_k\,,$$
and for $\tau \in \{\minus, \zero, \plus\}$, define
\begin{equation}\label{eq-zero-definition-g}
g(S, \tau) = \{\langle u, v\rangle \in E(S, S^c): \xi^{\Lambda_N}_v = \tau \} \mbox{ and } \tilde g(S, \tau) = \{\langle u, v\rangle \in E(S, S^c): \tilde \xi^{\Lambda_N}_v = \tau \}\,.
\end{equation}
Note that for any $v\in \Lambda_N$ with $\xi_v^{\Lambda_N} = \zero$ we have $\sigma^{\Lambda_N, +}_v = 1$. Since $\xi^{\Lambda_N}_{v} = \zero$ for $v\in S$ (which implies that $\sigma^{\Lambda_N, +}_v = 1$ for $v\in S$),
\begin{equation}\label{eq-zero-S-condition}
h_S + |g(S, \plus)| - |g(S, \minus)| + |g(S, \zero)| \geq 0\,,
\end{equation}
because if \eqref{eq-zero-S-condition} does not hold, then $H^{\Lambda_N, +}(\sigma') < H^{\Lambda_N, +}(\sigma^{\Lambda_N, +})$ where $\sigma'$ is obtained from $\sigma^{\Lambda_N, +}$ by flipping its value on $S$, thus contradicting the minimality of $H^{\Lambda_N, +}(\sigma^{ \Lambda_N, +})$. In addition, by monotonicity (with respect to the external field), we have $g(S, \zero) \subset \tilde g(S, \zero)\cup \tilde g(S, \plus)$, $g(S, \plus) \subset \tilde g (S, \plus)$, and thus
$$|\tilde g(S, \plus)| - |g(S, \plus)| \geq | g(S, \zero) \setminus \tilde g(S, \zero)|\,.$$
Similarly, we have $\tilde g(S, \minus) \subset g(S, \minus)$ and $\tilde g(S, \zero) \subset g(S, \minus) \cup g(S, \zero)$, and thus
$$|g(S, \minus)| - |\tilde g(S, \minus)| \geq |\tilde g(S, \zero) \setminus g(S, \zero)|\,.$$
 By our definition of $B_k$'s, we see that $\tilde g(S, \zero) \cap g(S, \zero) = E(S, B_{k_*})$. Therefore, \eqref{eq-zero-S-condition} and the preceding two displays imply that
\begin{align*}
\tilde h^{(N)}_S + |\tilde g(S, \plus)| - |\tilde g(S, \minus)| - |\tilde g(S, \zero)| &\geq \tilde h^{(N)}_S + | g(S, \plus)| - | g(S, \minus)| + |g(S, \zero)| - 2| E(S, B_{k_*})|\\
&\geq |S| \Delta - 8|B_{k_*}| >0\,,
\end{align*}
where the last inequality follows from (b) and \eqref{eq-zero-B-*}. The preceding inequality implies  $\tilde H^{\Lambda_N, -}(\sigma') < \tilde H^{\Lambda_N, -}(\tilde \sigma^{\Lambda_N, -})$ where $\sigma'$ is obtained from $\tilde \sigma^{\Lambda_N, -}$ by flipping its value on $S$. This contradicts the minimality of $\tilde H^{\Lambda_N, -}(\tilde \sigma^{\Lambda_N, -})$, completing the proof of the lemma.
\end{proof}

\begin{lemma}\label{lem-zero-percolation-property}
For any $x_v \geq 0$ for  $v\in \Lambda_N$, let $\check h^{(N)}_v = h_v + x_v$ for $v\in \Lambda_N$ (we will use  $\check H^{\Lambda_N, \pm}(\sigma)$, $\check \sigma^{\Lambda_N, \pm}$, $\check \xi^{\Lambda_N}$, $\check {\mathcal C}^{\Lambda_N}$ to denote the corresponding $\check{}$ versions of $H^{\Lambda_N, \pm}(\sigma)$, $\sigma^{\Lambda_N, \pm}$, $\xi^{\Lambda_N}$, ${\mathcal C}^{\Lambda_N}$).
 Then with probability 1, for any $v\in \mathcal C^{\Lambda_N} \cap \check {\mathcal C}^{\Lambda_N}$ there is a path in $ \mathcal C^{\Lambda_N} \cap \check {\mathcal C}^{\Lambda_N}$ joining $v$ and $\partial \Lambda_N$.
\end{lemma}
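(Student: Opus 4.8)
The plan is to show that, $\P$-almost surely, every connected component of $\mathcal D:=\mathcal C^{\Lambda_N}\cap\check{\mathcal C}^{\Lambda_N}$ contains a vertex adjacent to $\partial\Lambda_N$; since $\Lambda_N$ is finite, this is exactly the assertion of the lemma. Because there are only finitely many subsets of $\Lambda_N$, it is enough to fix a nonempty $C\subseteq\Lambda_N$ none of whose vertices is adjacent to $\partial\Lambda_N$ (so in particular $\partial C\subseteq\Lambda_N$) and to prove $\P(C\text{ is a connected component of }\mathcal D)=0$.

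The first step is to rewrite $\mathcal D$ using monotonicity of the ground state in the field. Since $x_v\ge 0$ we have $\check\sigma^{\Lambda_N,\pm}\ge\sigma^{\Lambda_N,\pm}$ pointwise, and of course $\sigma^{\Lambda_N,+}\ge\sigma^{\Lambda_N,-}$ and $\check\sigma^{\Lambda_N,+}\ge\check\sigma^{\Lambda_N,-}$; combining these one checks that
$$\mathcal D=\{v\in\Lambda_N:\ \sigma^{\Lambda_N,+}_v=1\ \text{and}\ \check\sigma^{\Lambda_N,-}_v=-1\}.$$
Hence on the event $\{C\text{ is a component of }\mathcal D\}$ we have $\sigma^{\Lambda_N,+}\equiv 1$ and $\check\sigma^{\Lambda_N,-}\equiv -1$ on $C$, while each $v\in\partial C$ lies outside $\mathcal D$ and therefore satisfies $\sigma^{\Lambda_N,+}_v=-1$ or $\check\sigma^{\Lambda_N,-}_v=1$; in either case $\sigma^{\Lambda_N,+}_v-\check\sigma^{\Lambda_N,-}_v\le 0$ for every $v\in\partial C$.

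The second step is a flip comparison in the spirit of the proof of Lemma~\ref{lem-zero-perturbation}. On the same event I would compare $\sigma^{\Lambda_N,+}$ with the configuration obtained by flipping its values on $C$ from $+1$ to $-1$, and compare $\check\sigma^{\Lambda_N,-}$ with the configuration obtained by flipping its values on $C$ from $-1$ to $+1$. Edges internal to $C$ are unaffected, and since $C$ has no vertex adjacent to $\partial\Lambda_N$ the boundary-condition terms do not change either; only the interface edges $E(C,\partial C)$ and the field terms on $C$ contribute. With $d_v$ the number of neighbours of $v$ inside $C$, minimality of $H^{\Lambda_N,+}$ and of $\check H^{\Lambda_N,-}$ give
$$\sum_{v\in\partial C}d_v\,\sigma^{\Lambda_N,+}_v+h_C\ge 0,\qquad \sum_{v\in\partial C}d_v\,\check\sigma^{\Lambda_N,-}_v+\check h_C\le 0,$$
where $\check h_C=h_C+\sum_{v\in C}x_v\ge h_C$. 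Subtracting the second from the first and using the sign bound on $\partial C$ together with $\check h_C\ge h_C$, the difference of the two left-hand sides is $\le 0$; being also $\ge 0$, both left-hand sides vanish, so in particular $\sum_{v\in\partial C}d_v\,\sigma^{\Lambda_N,+}_v+h_C=0$.

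To conclude, the spins $(\sigma^{\Lambda_N,+}_v)_{v\in\partial C}$ take only finitely many values, so this identity forces $h_C=\sum_{v\in C}h_v$ into a fixed finite subset of $\R$; as $C\ne\emptyset$, $h_C$ is a non-degenerate Gaussian, so this has probability $0$, and a union bound over the finitely many admissible $C$ finishes the argument. The step needing the most care is the monotonicity reformulation of $\mathcal D$ — which is precisely what makes the sign inequality on $\partial C$ available and thus drives everything — together with the interface and field bookkeeping in the two flips; I do not anticipate a genuine obstacle, since the final ``a non-degenerate Gaussian a.s.\ misses a prescribed finite set'' mechanism is the same one behind the classical fact (recovered here by taking $x\equiv 0$) that $\mathcal C^{\Lambda_N}$ itself always percolates to $\partial\Lambda_N$.
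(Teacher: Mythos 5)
Your proposal is correct and follows essentially the same route as the paper's proof: both isolate a connected component of $\mathcal C^{\Lambda_N}\cap\check{\mathcal C}^{\Lambda_N}$ not adjacent to $\partial\Lambda_N$ and run the flip comparison for $H^{\Lambda_N,+}$ and $\check H^{\Lambda_N,-}$ on it, using monotonicity to control the interface terms (your sign bound $\sigma^{\Lambda_N,+}_v-\check\sigma^{\Lambda_N,-}_v\le 0$ on $\partial C$ is exactly the paper's edge-set containments $g(S,\zero)\cup g(S,\plus)\subset\check g(S,\plus)$ and $\check g(S,\zero)\cup\check g(S,\minus)\subset g(S,\minus)$ in disguise). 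The only (harmless) difference is the endgame: you squeeze both flip inequalities to the identity $h_C=-\sum_{v\in\partial C}d_v\sigma^{\Lambda_N,+}_v$ and kill it by the fact that a non-degenerate Gaussian a.s.\ avoids a finite set, whereas the paper stops one step earlier and cites the a.s.\ uniqueness of the ground state.
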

\begin{proof}
The proof is similar to that of Lemma~\ref{lem-zero-perturbation}, and in a way it is the case of $K = \infty$ there.

Suppose that the claim is not true. Then take $v\in  \mathcal C^{\Lambda_N} \cap \check {\mathcal C}^{\Lambda_N}$ (for which the claim fails), and let $S$ be the connected component in $\mathcal C^{\Lambda_N} \cap \check {\mathcal C}^{\Lambda_N}$ that contains $v$ (thus $S$ is not neighboring $\partial \Lambda_N$). Define $g(S, \tau)$ as in \eqref{eq-zero-definition-g} and define $\check g(S, \tau) = \{\langle u, v\rangle \in E(S, S^c): \check \xi^{\Lambda_N}_v = \tau \}$ . Similar to \eqref{eq-zero-S-condition}, we have that
$$h_S + |g(S, \plus)| - |g(S, \minus)| + |g(S, \zero)| \geq 0\,.$$
In our case, $g(S, \zero) \cup g(S, \plus) \subset  \check g(S, \plus)$ and $\check g(S, \zero) \cup \check g(S, \minus) \subset g(S, \minus)$. Therefore,
$$\check h^{(N)}_S + |\check g(S, \plus)| - |\check g(S, \minus)| - |\check g(S, \zero)| \geq  h_S + | g(S, \plus)| - | g(S, \minus)| + |g(S, \zero)| \geq 0\,.$$
The preceding inequality implies that $\check H^{\Lambda_N, -}(\sigma') \leq \check H^{\Lambda_N, -}(\check \sigma^{ \Lambda_N,-})$ where $\sigma'$ is obtained from $\check \sigma^{ \Lambda_N, -}$ by flipping its value on $S$. This happens with probability 0 since the ground state is unique with probability 1.
\end{proof}

\subsection{Proof of Proposition~\ref{prop-zero-crossing-dimension}}\label{sec-zero-crossing-dimension}

In this section, we will set $K = K(N)= N/4$, and $\Delta = \Delta(N)= \gamma/N$ for an absolute constant $\gamma>0$ to be selected, and we consider $\tilde h^{(N)}$ as in \eqref{eq-zero-def-tilde-h}.
In this case Condition (a) in Lemma~\ref{lem-zero-perturbation} holds trivially. For convenience, we use $ \P_N$ to denote the probability measure with respect to the field $\{ h_v: v\in \Lambda_N\}$ and use $\tilde \P_N$ to denote the probability measure with respect to $\{\tilde h^{(N)}_v: v\in \Lambda_N\}$.
\begin{lemma}\label{lem-zero-perturbation-contiguous}
Recall that $\epsilon$ is the variance parameter for the field $\{h_v\}$. For any $p>0$, there exists $c = c(\epsilon,p, \gamma)>0$ such that for any event $E_N$ with $\tilde \P_N(E_N)\geq p$, we have that
$$\P_N(E_N) \geq c\,.$$
\end{lemma}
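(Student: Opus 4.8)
The plan is to compare the two Gaussian measures on $\R^{\Lambda_N}$ directly: $\P_N$ has density $\propto \prod_{v}e^{-h_v^2/2\epsilon^2}$ while $\tilde\P_N$ corresponds to the shifted field, i.e. to the density $\propto\prod_v e^{-(h_v-\Delta)^2/2\epsilon^2}$ evaluated at the same $h$. Since both are product Gaussian measures differing only by a mean shift of $\Delta=\gamma/N$ in each of the $|\Lambda_N|\asymp N^2$ coordinates, the Radon--Nikodym derivative is explicit:
\[
\frac{d\tilde\P_N}{d\P_N}(h)=\exp\!\Big(\frac{\Delta}{\epsilon^2}\sum_{v\in\Lambda_N}h_v-\frac{|\Lambda_N|\Delta^2}{2\epsilon^2}\Big)=\exp\!\Big(\frac{\Delta}{\epsilon^2}h_{\Lambda_N}-\frac{|\Lambda_N|\Delta^2}{2\epsilon^2}\Big).
\]
Here $h_{\Lambda_N}=\sum_{v\in\Lambda_N}h_v$ is, under $\P_N$, a centered Gaussian of variance $|\Lambda_N|\epsilon^2$, so $\Delta h_{\Lambda_N}/\epsilon^2$ is Gaussian with variance $|\Lambda_N|\Delta^2/\epsilon^2=\gamma^2|\Lambda_N|/(N^2\epsilon^2)$, which is bounded above by an absolute constant depending only on $\gamma,\epsilon$ (since $|\Lambda_N|=(2N+1)^2\le 9N^2$). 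This is the one genuinely important point: the perturbation is tuned to be of order $1/N$ precisely so that the total fluctuation it induces, $\Delta\cdot\sqrt{|\Lambda_N|}$, stays $O(1)$.

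First I would fix the threshold: let $L=L(\epsilon,\gamma)>0$ be a constant to be chosen, and consider the "good" event $G_N=\{h_{\Lambda_N}\ge -L\}$ (equivalently $\frac{\Delta}{\epsilon^2}h_{\Lambda_N}\ge -L\Delta/\epsilon^2$). On $G_N$ we have $\frac{d\tilde\P_N}{d\P_N}\le \exp(L\Delta/\epsilon^2)\le e^{C_0}$ for an absolute constant $C_0=C_0(\epsilon,\gamma)$, using again $\Delta=\gamma/N\le\gamma$. Consequently, for any event $E_N$,
\[
p\le\tilde\P_N(E_N)\le \tilde\P_N(E_N\cap G_N)+\tilde\P_N(G_N^c)\le e^{C_0}\,\P_N(E_N\cap G_N)+\tilde\P_N(G_N^c)\le e^{C_0}\,\P_N(E_N)+\tilde\P_N(G_N^c).
\]
Under $\tilde\P_N$, $h_{\Lambda_N}$ is Gaussian with mean $|\Lambda_N|\Delta\le 9N^2\cdot\gamma/N=9\gamma N$ and variance $|\Lambda_N|\epsilon^2\le 9N^2\epsilon^2$, so $\tilde\P_N(h_{\Lambda_N}<-L)$ is a Gaussian lower-tail probability with the left tail even further away once $L\ge 0$; a crude bound gives $\tilde\P_N(G_N^c)\le \tilde\P_N(h_{\Lambda_N}-|\Lambda_N|\Delta<-L)\le \exp(-L^2/(2|\Lambda_N|\epsilon^2))$ — but this goes to $1$, not $0$. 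The fix is to not shift the mean away: instead bound $\tilde\P_N(G_N^c)=\tilde\P_N(h_{\Lambda_N}<-L)$, and since $\tilde\P_N$ puts the mean at $+9\gamma N\ge 0$, we have $\tilde\P_N(h_{\Lambda_N}<-L)\le \P'(Z<-L/(\epsilon\sqrt{|\Lambda_N|}))$ where $Z$ is standard normal shifted by a nonnegative amount, hence $\le\Phi(-L/(\epsilon\sqrt{|\Lambda_N|}))$. That still tends to $1/2$.

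So the clean route is instead to choose $L$ \emph{proportional to $N$}: set $G_N=\{h_{\Lambda_N}\ge -\lambda N\}$ for a constant $\lambda$. Then $\tilde\P_N(G_N^c)=\tilde\P_N(h_{\Lambda_N}<-\lambda N)$; since under $\tilde\P_N$ the mean of $h_{\Lambda_N}$ is $\ge 0$ and its standard deviation is $\le 3\epsilon N$, this is at most $\Phi(-\lambda/(3\epsilon))$, which can be made smaller than $p/2$ by taking $\lambda=\lambda(\epsilon,p)$ large. And on $G_N$, $\frac{d\tilde\P_N}{d\P_N}\le\exp\big(\lambda N\Delta/\epsilon^2\big)=\exp(\lambda\gamma/\epsilon^2)=:e^{C_0(\epsilon,p,\gamma)}$, an absolute constant, because $N\Delta=\gamma$ exactly. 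Plugging in: $p\le e^{C_0}\P_N(E_N)+p/2$, so $\P_N(E_N)\ge (p/2)e^{-C_0}=:c(\epsilon,p,\gamma)>0$, which is the claim. The only step needing care is the Gaussian tail estimate for $\tilde\P_N(G_N^c)$, and the observation that $N\Delta=\gamma$ makes the Radon--Nikodym bound on $G_N$ an absolute constant — both are elementary, and there is no real obstacle here; the lemma is a soft change-of-measure statement whose whole content is the scaling $\Delta\asymp 1/N$.
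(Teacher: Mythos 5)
Your overall strategy is exactly the paper's: write the explicit Gaussian Radon--Nikodym derivative between $\P_N$ and $\tilde\P_N$, observe that the scaling $N\Delta=\gamma$ makes the exponent $O(1)$ on a good event of $\tilde\P_N$-probability at least $1-p/2$, and conclude by restricting to that event. However, as written there is a concrete error in the choice of the good event, and it breaks the central inequality. The ratio
\[
\frac{d\tilde\P_N}{d\P_N}(h)=\exp\Big(\frac{\Delta}{\epsilon^2}h_{\Lambda_N}-\frac{|\Lambda_N|\Delta^2}{2\epsilon^2}\Big)
\]
is \emph{increasing} in $h_{\Lambda_N}$, so the upper bound $\frac{d\tilde\P_N}{d\P_N}\le e^{C_0}$ that your chain of inequalities requires (to pass from $\tilde\P_N(E_N\cap G_N)$ to $e^{C_0}\P_N(E_N\cap G_N)$) forces the good event to truncate the \emph{upper} tail of $h_{\Lambda_N}$. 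Your $G_N=\{h_{\Lambda_N}\ge-\lambda N\}$ truncates the lower tail; on that event the ratio is unbounded (it blows up as $h_{\Lambda_N}\to+\infty$), so the claimed bound $\frac{d\tilde\P_N}{d\P_N}\le\exp(\lambda N\Delta/\epsilon^2)$ on $G_N$ is false. Your lengthy worry about the lower-tail Gaussian estimate (``this goes to $1$, not $0$\dots that still tends to $1/2$'') is a symptom of the same confusion: you were estimating the wrong tail all along.

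The repair is immediate and leaves the rest of your argument intact: take $G_N=\{h_{\Lambda_N}\le\lambda N\}$. Under $\tilde\P_N$ the sum $h_{\Lambda_N}$ is Gaussian with mean $\Delta|\Lambda_N|\le 9\gamma N$ and standard deviation at most $3\epsilon N$, so for $\lambda=\lambda(\epsilon,p,\gamma)$ large enough $\tilde\P_N(G_N^c)=\tilde\P_N(h_{\Lambda_N}>\lambda N)\le\Phi\big(-(\lambda-9\gamma)/(3\epsilon)\big)<p/2$; and on $G_N$ one has $\frac{d\tilde\P_N}{d\P_N}\le\exp(\lambda N\Delta/\epsilon^2)=\exp(\lambda\gamma/\epsilon^2)$, which is the $O(1)$ constant you wanted. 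With this sign corrected, your proof coincides in substance with the paper's, which restricts to the two-sided event $|\tilde h^{(N)}_{\Lambda_N}-\Delta|\Lambda_N||\le C\epsilon N$ and bounds $d\P_N/d\tilde\P_N$ from below there via \eqref{eq-zero-change-of-measure}.
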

\begin{proof}
There exists a constant $C>0$ such that $\tilde \P_N(|\tilde h^{(N)}_{\Lambda_{N}} - \Delta |\Lambda_N|| \geq C \epsilon N) \leq p/2$. Thus we have
\begin{equation}\label{eq-zero-prob-E-N-bound}
\tilde \P_N(E_N; |\tilde h^{(N)}_{\Lambda_{N}} - \Delta |\Lambda_N|| \leq C \epsilon N) \geq p/2\,.
\end{equation}
Also, by a straightforward Gaussian computation, we see that
\begin{equation}\label{eq-zero-change-of-measure}
\frac{d  \P_N}{d \tilde \P_N}  = \exp\big\{ - \frac{\Delta(\tilde h^{(N)}_{\Lambda_{N}} - \Delta |\Lambda_N|)}{\epsilon^2} \big\} \exp\big\{\frac{-\Delta^2 |\Lambda_N|}{2\epsilon^2}\big\}
\end{equation}
and thus there exists $\iota = \iota (\epsilon)>0$ such that
$$\frac{d  \P_N}{d \tilde \P_N} \geq \iota \mbox{ provided that } |\tilde h^{(N)}_{\Lambda_{N}} - \Delta |\Lambda_N|| \leq C \epsilon N\,.$$
Combined with \eqref{eq-zero-prob-E-N-bound}, this completes the proof of the lemma.
\end{proof}
For any annulus $\mathcal A$, we denote by $\mathrm{Cross}_{\mathrm{hard}}(\mathcal A, \mathcal C)$ the event that there is a contour in $\mathcal C$ which separates the inner and outer boundaries of $\mathcal A$, and by $\mathrm{Cross}_{\mathrm{easy}}(\mathcal A, \mathcal C)$ the event that there is a path in $\mathcal C$ which connects the inner and outer boundaries of $\mathcal A$.
\begin{lemma}\label{lem-zero-bound-hard-crossing}
There exists $\delta = \delta(\epsilon)>0$ such that
$$
\min\{\P(\mathrm{Cross}_{\mathrm{hard}}(\Lambda_{N/8}\setminus \Lambda_{N/32}, \mathcal C^{\Lambda_N})), \P(\mathrm{Cross}_{\mathrm{easy}}(\Lambda_{N/8}\setminus \Lambda_{N/32}, \mathcal C^{\Lambda_N})) \}\leq 1- \delta \mbox{ for all } N \geq 32.$$
\end{lemma}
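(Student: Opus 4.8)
The plan is to prove the dichotomy by contradiction: assume both $\P(\mathrm{Cross}_{\mathrm{hard}}(\mathcal A, \mathcal C^{\Lambda_N}))$ and $\P(\mathrm{Cross}_{\mathrm{easy}}(\mathcal A, \mathcal C^{\Lambda_N}))$ are close to $1$ (here $\mathcal A = \Lambda_{N/8}\setminus\Lambda_{N/32}$), and derive that with probability close to $1$ there is simultaneously a $\zero$-contour separating the two boundaries of $\mathcal A$ \emph{and} a $\zero$-path crossing it; combining such a contour and such a crossing forces a large connected cluster of $\zero$'s, in particular one cannot have $d_{\mathcal C^{\Lambda_N}}$ being too large along the relevant scales. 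The engine is Lemma~\ref{lem-zero-perturbation} applied with $K = N/4$ and $\Delta = \gamma/N$: under these choices Condition (a) holds for free (since the contour together with the crossing keeps $\mathcal C_*^{\Lambda_N}$ connected across $\partial\Lambda_{N/4}$ to $\partial\Lambda_{N/2}$ once we also use Lemma~\ref{lem-zero-percolation-property}), so the conclusion of Lemma~\ref{lem-zero-perturbation} says Condition (b) must fail, i.e.\ $|\mathcal C_*^{\Lambda_N}\cap\Lambda_{N/4}|\cdot\Delta \le \tfrac{8}{K}|\mathcal C_*^{\Lambda_N}\cap\mathcal A_{N/2}|$, which bounds $|\mathcal C_*^{\Lambda_N}\cap\Lambda_{N/4}|$ by a constant multiple of $|\mathcal C_*^{\Lambda_N}\cap\mathcal A_{N/2}| \le |\mathcal A_{N/2}| \lesssim N^2$. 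That is much too small to accommodate a $\zero$-contour in $\mathcal A$ that must also be present (up to the $*$-intersection) in the perturbed picture, producing the contradiction.

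The steps, in order, are as follows. \emph{Step 1:} Work in $\tilde\P_N$-land first. Using Lemma~\ref{lem-zero-perturbation-contiguous}, it suffices to get the bound $1-\delta$ under $\tilde\P_N$ for the perturbed field $\tilde h^{(N)}$ (with $\Delta = \gamma/N$), since contiguity then transfers any probability bounded away from $1$ back to $\P_N$ — more precisely, if the $\P_N$-probability of the bad event were $\ge 1-\delta$ for every $\delta$, a complementary-event argument via Lemma~\ref{lem-zero-perturbation-contiguous} would be used; I will instead phrase it directly, bounding the probability that \emph{both} hard and easy crossings occur. \emph{Step 2:} On the event $\mathrm{Cross}_{\mathrm{hard}}(\mathcal A,\mathcal C^{\Lambda_N})\cap\mathrm{Cross}_{\mathrm{easy}}(\mathcal A,\mathcal C^{\Lambda_N})$, observe that a separating $\zero$-contour in $\mathcal A$ must intersect any $\zero$-path connecting the two boundaries of $\mathcal A$, hence lies in one $\zero$-connected cluster that reaches both $\partial\Lambda_{N/32}$-side and $\partial\Lambda_{N/8}$-side. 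By the monotonicity relation \eqref{eq-zero-monotonicity} and Lemma~\ref{lem-zero-percolation-property}, this cluster (intersected with $\mathcal C_*^{\Lambda_N}$ under a further $1/N$ perturbation) connects $\partial\Lambda_{N/4}$ to $\partial\Lambda_{N/2}$ — wait, the scales here are $N/32$ and $N/8$, both inside $\Lambda_{N/4}$; so actually the right reading is that the contour surrounds $\Lambda_{N/32}$ and hence any disagreement path from $\Lambda_{N/32}$ outward must cross it, giving a connected $\zero$-set of diameter $\gtrsim N$. \emph{Step 3:} Apply Lemma~\ref{lem-zero-perturbation} with $K=N/4$, $\Delta=\gamma/N$. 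Condition (a) holds on this event; therefore (b) fails, so $|\mathcal C_*^{\Lambda_N}\cap\Lambda_{N/4}|\le \tfrac{8}{K\Delta}|\mathcal C_*^{\Lambda_N}\cap\mathcal A_{N/2}| = \tfrac{32}{\gamma}|\mathcal C_*^{\Lambda_N}\cap\mathcal A_{N/2}|$. \emph{Step 4:} Choose $\gamma$ large enough that this forces $\mathcal C_*^{\Lambda_N}\cap\Lambda_{N/4}$ to be a vanishing fraction, contradicting the presence of a contour inside $\Lambda_{N/8}\subset\Lambda_{N/4}$ that must belong to $\mathcal C_*^{\Lambda_N}$ (the contour is a closed curve of length $\gtrsim N$, so $|\mathcal C_*^{\Lambda_N}\cap\Lambda_{N/4}|\ge cN$, while $|\mathcal C_*^{\Lambda_N}\cap\mathcal A_{N/2}|$ would need to be $\gtrsim \gamma N$ — so one takes $\gamma$ beyond the constant controlling $|\mathcal A_{N/2}|$ versus what can simultaneously be crossed). \emph{Step 5:} Transfer the resulting strict-inequality-with-positive-gap bound from $\tilde\P_N$ back to $\P_N$ via Lemma~\ref{lem-zero-perturbation-contiguous}, yielding $1-\delta$ for $\delta=\delta(\epsilon)>0$.

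The main obstacle I anticipate is Step 2–4: correctly setting up the geometry so that the \emph{simultaneous} existence of a hard crossing (separating contour) and an easy crossing in the annulus $\mathcal A$ feeds into the hypothesis "Condition (a) holds" of Lemma~\ref{lem-zero-perturbation} \emph{and} simultaneously gives a \emph{lower} bound on $|\mathcal C_*^{\Lambda_N}\cap\Lambda_{N/4}|$ that beats the \emph{upper} bound coming from the failure of Condition (b). The delicate point is that Lemma~\ref{lem-zero-perturbation} speaks about $\mathcal C_*^{\Lambda_N}=\tilde{\mathcal C}^{\Lambda_N}\cap\mathcal C^{\Lambda_N}$, not about $\mathcal C^{\Lambda_N}$ itself, so I must argue that the crossing/contour events, which are stated for $\mathcal C^{\Lambda_N}$, persist (with only constant-factor loss in size) after intersecting with $\tilde{\mathcal C}^{\Lambda_N}$; this is exactly where Lemma~\ref{lem-zero-percolation-property} (the $K=\infty$ case) is the right tool — a disagreement of the original field that is also a disagreement of the perturbed field remains connected to the boundary within $\mathcal C_*^{\Lambda_N}$ — but one needs to chain these connectivity statements carefully, and to handle the annulus scales ($N/32$, $N/8$, $N/4$, $N/2$) so that all inclusions $A^{\mathrm{Large}}\subset\Lambda_N$ and the distance estimates line up. The Gaussian contiguity (Step 1 and Step 5) is routine given Lemma~\ref{lem-zero-perturbation-contiguous}.
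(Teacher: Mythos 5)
There is a genuine gap, and it sits exactly where you flagged your ``main obstacle'': the quantitative contradiction in Steps 3--4 does not close. The failure of Condition (b) of Lemma~\ref{lem-zero-perturbation} (with $K=N/4$, $\Delta=\gamma/N$) gives only the upper bound $|\mathcal C_*^{\Lambda_N}\cap\Lambda_{N/4}|\le \tfrac{32}{\gamma}\,|\mathcal C_*^{\Lambda_N}\cap\mathcal A_{N/2}|$, and the right-hand side can be as large as $\tfrac{32}{\gamma}|\mathcal A_{N/2}|\asymp N^2/\gamma$. Your lower bound from the separating contour is only $|\mathcal C_*^{\Lambda_N}\cap\Lambda_{N/4}|\gtrsim N$, so the two bounds are compatible for every fixed constant $\gamma$ once $N$ is large; to force a contradiction you would need $\gamma\gtrsim N$, which is not a constant and destroys the contiguity step. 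The paper's proof avoids this by never comparing a length against an area: it compares the \emph{count} of (doubly perturbed) disagreements in $\Lambda_{N/32}$ under ``enhanced'' boundary conditions on $\partial\Lambda_{N/8}$ with the count in $\mathcal A_{N/2}$, using that the latter is stochastically dominated by a sum of $r$ independent-of-the-former, identically distributed copies of the former; a quantile argument then shows that with probability at least $1/4r$ the enhanced count in $\Lambda_{N/32}$ exceeds $r^{-1}$ times the count in $\mathcal A_{N/2}$, which is exactly what makes Condition (b) verifiable and lets Lemma~\ref{lem-zero-perturbation} expel at least one enhanced disagreement from $\mathcal C_*^{\Lambda_N}$; planar duality then kills the hard crossing. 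That distributional comparison is the missing engine of your argument.

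A second, independent problem is your claim that the separating contour ``must belong to $\mathcal C_*^{\Lambda_N}$.'' The contour is produced by the event $\mathrm{Cross}_{\mathrm{hard}}(\mathcal A,\mathcal C^{\Lambda_N})$ and so lives in $\mathcal C^{\Lambda_N}$; there is no containment in either direction between $\mathcal C^{\Lambda_N}$ and $\tilde{\mathcal C}^{\Lambda_N}$ (raising the field can turn a $\zero$ into a $\plus$, and can also turn a $\minus$ into a $\zero$), so the contour need not survive intersection with $\tilde{\mathcal C}^{\Lambda_N}$, and Lemma~\ref{lem-zero-percolation-property} only gives connectivity to $\partial\Lambda_N$ of vertices already known to lie in the intersection --- it does not upgrade a $\mathcal C^{\Lambda_N}$-contour to a $\mathcal C_*^{\Lambda_N}$-contour. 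The paper sidesteps this by defining the relevant counting variables directly in terms of the intersected sets ($\mathcal C_\diamond^{A}=\mathcal C^A\cap\bar{\mathcal C}^A$) from the outset, and by splitting into the case where these counts are typically positive (Case 1) and the degenerate case where $\mathcal C_*^{\Lambda_N}\cap\Lambda_{N/8}$ is empty with probability at least $1/2$ (Case 2), which is where the easy crossing enters. Your proposal has no mechanism playing either role.
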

\begin{proof}
We first provide a brief discussion on the outline of the proof. We refer to the disagreements on $\Lambda_{N/32}$ with plus/minus boundary conditions posed on $\partial \Lambda_{N/8}$ as the ``enhanced'' disagreements (the word enhanced is chosen since the enhanced disagreements stochastically dominate the disagreements with boundary conditions on $\partial \Lambda_N$ by monotonicity of the Ising model). Note that the set of disagreements in $\mathcal A_{N/2}$ is stochastically dominated by the union of a constant number of copies of enhanced disagreements, which are independent of the enhanced disagreements in $\Lambda_{N/32}$. Therefore, with positive probability the number of enhanced disagreements in $\Lambda_{N/32}$ is larger than (up to a constant factor) the number of disagreements in $\mathcal A_{N/2}$ (see \eqref{eq-zero-prob-E}). On this event, (modulo a caveat) by Lemma~\ref{lem-zero-perturbation}  at least one of the enhanced disagreements is not a disagreement when considering boundary conditions on $\partial \Lambda_N$ --- this yields the desired statement as incorporated in {\bf Case 1} below. In {\bf Case 2}, we tighten the argument by addressing the caveat which is the scenario that the enhanced disagreement is empty (this is relatively simple).

We are now ready to carry out the formal proof. We can write $\mathcal A_{N/2} = \cup_{i=1}^r A_i$ where each $A_i$ is a  box of side length $N/16$ (so a copy of $\Lambda_{N/32}$) and $r\geq 16$ is a fixed integer. For a box $A$, denoting by $A^{\mathrm{Big}}$ as the concentric box of $A$ whose side length is $4 \ell_A$. We have that (see Figure~\ref{figure-2.7})
\begin{equation}\label{eq-zero-disjointness}
A_i^{\mathrm{Big}} \cap \Lambda_{N/8} = \emptyset  \mbox{ and } A_i^{\mathrm{Big}}  \subset \Lambda_N \mbox{ for all } 1\leq i\leq r.
\end{equation}
For any $A\subset \Lambda_N$, let $\bar {\mathcal C}^A$ be defined as $\mathcal C^A$ but replacing $\{h_v: v\in A\}$ by $\{\tilde h^{(N)}_v: v\in A\}$ (note that $\bar {\mathcal C}^{\Lambda_{N/2}}$ is different from $\tilde {\mathcal C}^{\Lambda_{N/2}}$, which is defined with respect to $\tilde h^{(N/2)}$). Write $\mathcal C_\diamond^{A} = \mathcal C^A \cap \bar {\mathcal C}^A$.
Write $X_i = |\mathcal C_\diamond^{A_i^{\mathrm{Big}}} \cap A_i|$ and $X = |\mathcal C_\diamond^{\Lambda_{N/8}} \cap \Lambda_{N/32}|$. Clearly, $X_i$'s and $X$ are identically distributed and by \eqref{eq-zero-disjointness} $X_i$'s are independent of $X$ (but $X_i$'s are not mutually independent). Let $\theta = \inf\{x: \P(X \leq x) \geq 1 - 1/2r \}$. Thus,
\begin{equation}\label{eq-zero-prob-E}
\mathbb P(X \geq \max_{1\leq i\leq r} X_i, X\geq \theta) \geq \P(X\geq \theta) \P(\max_{1\leq i\leq r} X_i \leq \theta) \geq 1/4r\,.
\end{equation}
The rest of the proof divides into two cases.
\begin{center}
\begin{figure}[h]
  \includegraphics[width=15cm]{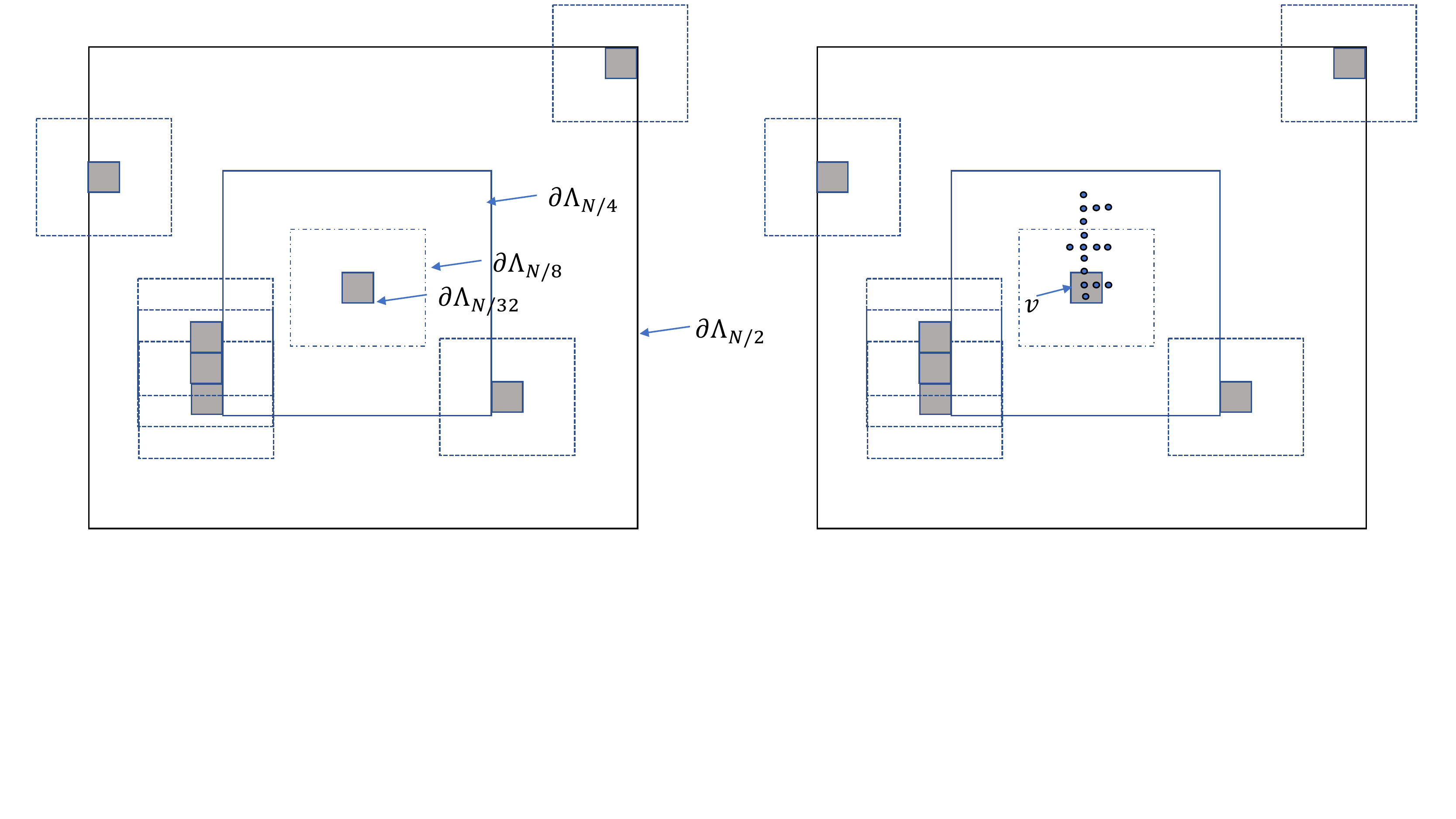}\\ \vspace{-3cm}
 \caption{Illustration for the geometric setup of the proof for Lemma 2.7. In the picture on the left we cover $\mathcal A_{N/2}$ by a collection of translated copies of $\Lambda_{N/32}$ (the grey boxes) --- we only draw out a few copies for an illustration. Note that the (4-times) enlargements of translated copies (while overlapping among themselves) are all disjoint with $\Lambda_{N/8}$. The picture on the right illustrates the scenario in Case 1: for some $v \in \mathcal C_\diamond^{\Lambda_{N/8}} \setminus \mathcal C^{\Lambda_{N}}$, we draw its component with the same $\xi^{\Lambda_N}$-value and this component necessarily goes out of $\Lambda_{N/8}$.}\label{figure-2.7}
\end{figure}
\end{center}

\noindent {\bf Case 1:} $\theta>0$.
Let $\mathcal E = \{  |\mathcal C_\diamond^{\Lambda_{N/8}} \cap \Lambda_{N/32}| \geq r^{-1} | \mathcal C_*^{\Lambda_N} \cap \mathcal A_{N/2}|\} \cap\{ |\mathcal C_\diamond^{\Lambda_{N/8}} \cap \Lambda_{N/32}|  >0\}$. By \eqref{eq-zero-monotonicity} and \eqref{eq-zero-disjointness}, we have $|\mathcal C_*^{\Lambda_N} \cap \mathcal A_{N/2}| \leq \sum_{i=1}^r X_i$. Combined with \eqref{eq-zero-prob-E}, it gives that $\P(\mathcal E) \geq 1/4r$. Setting $\gamma = 100 r$, we get that $|\mathcal C_\diamond^{\Lambda_{N/8}} \cap \Lambda_{N/32}| \cdot \Delta> 16 K^{-1}  | \mathcal C_*^{\Lambda_N} \cap \mathcal A_{N/2}|$ on $\mathcal E$. By Lemma~\ref{lem-zero-perturbation}, on $\mathcal E$ there is at least one vertex $v\in \mathcal C_\diamond^{\Lambda_{N/8}} \cap \Lambda_{N/32}$ but $v\not\in \mathcal C_*^{\Lambda_N}$. So either $v \not\in \mathcal C^{\Lambda_N} $ or $v \not\in \tilde {\mathcal C}^{\Lambda_N} $ on $\mathcal E$. Assume that $v \not\in \mathcal C^{\Lambda_N}$ and the other case can be treated similarly.

We will use the following property: for any connected set $\mathcal A$, $u\not\in \mathcal C^{\mathcal A}$ if and only if there exists a connected set $A\subset \mathcal A$ with $u\in A$ such that $\xi^A_w= \plus$ for all $w\in A$ or $\xi^A_w= \minus$ for all $w\in A$. The ``if'' direction of the property follows from \eqref{eq-zero-monotonicity}. For the ``only if'' direction, we assume without loss that $\xi_u^{\mathcal A} = \plus$ and let $A$ be the connected component containing $u$ where the $\xi^{\mathcal A}$-value is $\plus$. Note $\sigma^{\mathcal A, -}_w = -1$ for all $w\in \partial A$ and $\sigma^{\mathcal A, -}_w = 1$ for all $w\in A$. This implies that $\xi^A_w = \plus$ for all $w\in A$.

By the preceding property, there exists a connected set $A\subset \Lambda_N$ with $v\in A$ such that $\xi^{A}_w = \plus$ for all $w\in A$ or $\xi^{A}_w = \minus$ for all $w\in A$ (see Figure~\ref{figure-2.7} for an illustration). In addition, $A$ cannot be contained in $\Lambda_{N/8}$ since otherwise it contradicts $v\in \mathcal C^{\Lambda_{N/8}}$. By planar duality, this implies that on $\mathcal E$, either $\mathrm{Cross}_{\mathrm{hard}}(\Lambda_{N/8}\setminus \Lambda_{N/32}, \mathcal C^{\Lambda_N})$ or $\mathrm{Cross}_{\mathrm{hard}}(\Lambda_{N/8}\setminus \Lambda_{N/32}, \tilde {\mathcal C}^{\Lambda_N})$ does not occur (the second case corresponds to the case when $v\not\in \tilde {\mathcal C}^{\Lambda_N}$). Therefore,
$$\P((\mathrm{Cross}_{\mathrm{hard}}(\Lambda_{N/8}\setminus \Lambda_{N/32}, \mathcal C^{\Lambda_N}))^c) + \P((\mathrm{Cross}_{\mathrm{hard}}(\Lambda_{N/8}\setminus \Lambda_{N/32}, \tilde {\mathcal C}^{\Lambda_N}))^c) \geq \P(\mathcal E) \geq 1/4r\,.$$
Combined with Lemma~\ref{lem-zero-perturbation-contiguous}, this completes the proof of the lemma.

\noindent {\bf Case 2:}  $\theta = 0$. Applying a simple union bound (by using 16 copies of $\Lambda_{N/32}$ to cover $\Lambda_{N/8}$, and a derivation similar  to $|\mathcal C_*^{\Lambda_N} \cap \mathcal A_{N/2}| \leq \sum_{i=1}^r X_i$) we get that $\P(\mathcal C_*^{\Lambda_N} \cap  \Lambda_{N/8} = \emptyset) \geq 1/2$. We assume without loss that $ \P(\mathrm{Cross}_{\mathrm{easy}}(\Lambda_{N/8}\setminus \Lambda_{N/32}, \mathcal C^{\Lambda_N})) \geq 3/4$ (otherwise there is nothing further to prove), and thus
$$ \P(\mathrm{Cross}_{\mathrm{easy}}(\Lambda_{N/8}\setminus \Lambda_{N/32}, \mathcal C^{\Lambda_N})\mbox{ and }\mathcal C_*^{\Lambda_N} \cap  \Lambda_{N/8} = \emptyset ) \geq 1/4\,.$$
On the event $\mathrm{Cross}_{\mathrm{easy}}(\Lambda_{N/8}\setminus \Lambda_{N/32}, \mathcal C^{\Lambda_N})\mbox{ and }\mathcal C_*^{\Lambda_N} \cap  \Lambda_{N/8} = \emptyset$, the easy crossing (joining two boundaries of $\Lambda_{N/8}\setminus \Lambda_{N/32}$) in $\mathcal C^{\Lambda_N}$ becomes an easy crossing with $\tilde \xi^{\Lambda_N}$-values $\plus$. Thus, by planar duality, it prevents existence of a contour surrounding $\Lambda_{N/32}$ in $(\Lambda_{N/8}\setminus \Lambda_{N/32}) \cap \tilde {\mathcal C}^{\Lambda_N}$. Therefore,
$$\P((\mathrm{Cross}_{\mathrm{hard}}(\Lambda_{N/8}\setminus \Lambda_{N/32}, \tilde {\mathcal C}^{\Lambda_N}))^c) \geq 1/4\,.$$
Combined with Lemma~\ref{lem-zero-perturbation-contiguous}, this completes the proof of the lemma.
\end{proof}

\begin{proof}[Proof of \eqref{eq-zero-crossing-prob}]
Let $N = \min\{2^n: 2^{n+2} \geq \ell_A\}$. By our assumption on $A$, it is clear that we can position four copies $A_1, A_2, A_3, A_4$ of $A$ by translation or rotation by 90 degrees so that (see the left of Figure~\ref{figure})
\begin{itemize}
\item $A_1, A_2, A_3, A_4 \subset \Lambda_{N/8} \setminus \Lambda_{N/32}$.
\item  The union of any crossings through $A_1, A_2, A_3, A_4$ in their longer directions surrounds $\Lambda_{N/32}$.
\item $\Lambda_N \subset A^{\mathrm{Large}}_i$ for $1\leq i\leq 4$.
 \end{itemize}
Set $p = \P(\mathrm{Cross}(A, \mathcal C^{A^{\mathrm{Large}}}))$ (note that $p$ depends on the dimension of $A$ and also the orientation of $A$). By rotation symmetry and \eqref{eq-zero-monotonicity} we see that $\P(\mathrm{Cross}(A_i, \mathcal C^{\Lambda_N}) ) \geq \P(\mathrm{Cross}(A_i, \mathcal C^{A_i^{\mathrm{Large}}}) ) =   p$.  In what follows, we denote $\mathcal A =  \Lambda_{N/8} \setminus \Lambda_{N/32}$. Then, by  $\P(\mathrm{Cross}(A_i, \mathcal C^{\Lambda_N}) ) \geq p$ and a simple union bound, we get that
 \begin{equation}\label{eq-zero-cross-A-hard}
 \P(\mathrm{Cross}_{\mathrm{hard}}(\mathcal A, \mathcal C^{\Lambda_N})) \geq \P (\cap_{i=1}^4 \mathrm{Cross}(A_i, \mathcal C^{\Lambda_N}))\geq 1 - 4(1-p)\,.
 \end{equation}
 Similarly, we can arrange two copies  $A_a, A_b$ of $A$ obtained by translation and rotation by 90 degrees such that $\Lambda_N \subset A_a^{\mathrm{Large}}, A_b^{\mathrm{Large}}$ and that the union of any two crossings through $A_a^{\mathrm{Large}}, A_b^{\mathrm{Large}}$ in the longer direction connects the two boundaries of $\mathcal A$ (see the right of Figure~\ref{figure}). This implies that
 \begin{equation}\label{eq-zero-cross-A-easy}
 \P(\mathrm{Cross}_{\mathrm{easy}}(\mathcal A, \mathcal C^{\Lambda_N})) \geq  \P (\mathrm{Cross}(A_a, \mathcal C^{\Lambda_N}) \cap \mathrm{Cross}(A_b, \mathcal C^{\Lambda_N}))\geq 1 - 2(1-p)\,.
 \end{equation}
 Combined with \eqref{eq-zero-cross-A-hard} and Lemma~\ref{lem-zero-bound-hard-crossing}, it yields that $p\leq 1 - \delta$ for some $\delta = \delta(\epsilon)>0$ as required.
\end{proof}

The following standard lemma will be applied several  times below. Before presenting the lemma, we first provide a definition.
\begin{definition}\label{def-percolation-process}
 Divide $\Lambda_N$ into disjoint boxes of side lengths $N' \leq N$ where $N' = 2^{n'}$ for some $n'\geq 1$, and denote by $\mathcal B(N, N')$ the collection of such boxes. Consider a percolation process on $\mathcal B(N, N')$, where each box $B\in \mathcal B(N, N')$ is regarded open or closed randomly. For $C, p>0$, we say that the percolation process satisfies the $(N, N', C, p)$-condition if for each $B\in \mathcal B(N, N')$, there exists an event $E_B$ such that
\begin{itemize}
\item On $E_B^c$, $B$ is closed.
\item $\P(E_B) \leq p$ for each $B$.
\item If $\min_{x\in B_i, y\in B_j}|x-y|_\infty \geq CN'$ for all $1\leq i<j\leq k$, then the events $E_{B_1}, \ldots, E_{B_k}$ are  mutually independent.
\end{itemize}
Furthermore, we say two boxes $B_1, B_2$ are adjacent if $\min_{x_1\in B_1, x_2\in B_2} |x_1 - x_2|_\infty  \leq 1$, and we say a collection of boxes is a lattice animal if these boxes form a connected graph.
\end{definition}
\begin{lemma}\label{lem-zero-enhance}
For any $C>0$, there exists $p>0$ such that for all $N$ and $N'\leq N$ and any percolation process on $\mathcal B(N, N', C, p)$ satisfying the $(N,N', C, p)$-condition, we have
$$\P(\mbox{there exists a lattice animal of open boxes on } \mathcal B(N, N') \mbox{ of size  at least } k) \leq (\tfrac{N}{N'})^2 2^{-k}\,.$$
\end{lemma}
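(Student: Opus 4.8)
The plan is to prove this by a direct first-moment (union bound) count over lattice animals, combining the classical bound on the number of lattice animals of a given size with the finite-dependence hypothesis supplied by the $(N,N',C,p)$-condition. First I would normalize: rescale so that each box $B\in\mathcal B(N,N')$ is a single site of a finite graph $\mathcal G$ on $(N/N')^2$ vertices, with adjacency as in Definition~\ref{def-percolation-process} (so each site has at most $3^2-1=8$ neighbors, since boxes of equal side length $N'$ sharing $\ell_\infty$-distance $\le 1$ are exactly the king-graph neighbors). A lattice animal of size $k$ is then a connected subgraph of $\mathcal G$ on $k$ vertices. The standard fact (e.g.\ from the Peierls-type counting argument) is that the number of lattice animals of size $k$ containing a fixed site is at most $C_0^k$ for an absolute constant $C_0$ (one may take $C_0 = 8e$ or any convenient bound coming from the degree bound $8$), hence the total number of lattice animals of size $k$ in $\mathcal G$ is at most $(N/N')^2 C_0^k$.

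Next I would handle the dependence. Fix a lattice animal $L=\{B_1,\dots,B_k\}$ and let $q(L)=\P(\text{all of }B_1,\dots,B_k\text{ open})$. I cannot directly bound this by $p^k$ because the events $E_{B_i}$ are only independent when the boxes are pairwise $\ell_\infty$-separated by at least $CN'$. So the key step is a \emph{sparsification}: inside any lattice animal of size $k$, one can greedily select a sub-collection $B_{i_1},\dots,B_{i_m}$ that is pairwise $CN'$-separated with $m \ge k/M$, where $M = M(C)$ is the maximal number of boxes of $\mathcal B(N,N')$ that can lie within $\ell_\infty$-distance $CN'$ of a fixed box (a purely geometric constant, $M \asymp (2C+1)^2$). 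Indeed, process the boxes of $L$ one at a time, add a box to the selected set if it is $CN'$-far from all previously selected ones, and discard at most $M-1$ boxes per selection; this yields $m \ge k/M$ selected boxes. Since ``$B$ open'' implies $E_B$ occurs, and the selected events are mutually independent with probability $\le p$ each, we get $q(L) \le \P(\cap_{j} E_{B_{i_j}}) \le p^{m} \le p^{k/M}$.

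Combining, $\P(\text{there is an open lattice animal of size} \ge k) \le \sum_{j\ge k} (N/N')^2 C_0^{j} p^{j/M} = (N/N')^2 \sum_{j \ge k} (C_0 p^{1/M})^{j}$. Now choose $p = p(C)>0$ small enough that $C_0 p^{1/M} \le 1/4$, so this sum is at most $(N/N')^2 \cdot (1/4)^k \cdot \frac{1}{1-1/4} \le (N/N')^2 2^{-k}$ after a harmless adjustment of the constant (or, to get exactly $2^{-k}$, pick $p$ so that $C_0 p^{1/M} \le 1/2$ and absorb the geometric-series factor $2$ into requiring $k\ge 1$, noting $\sum_{j\ge k}2^{-j} = 2^{-k+1}$ — if one wants the clean bound $2^{-k}$ literally, take $C_0 p^{1/M}\le 2^{-2}$ so $\sum_{j\ge k}4^{-j} \le 2\cdot 4^{-k}\le 2^{-k}$ for $k\ge1$). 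This gives the claimed inequality with $p$ depending only on $C$ (through $C_0$ and $M$), uniformly in $N$ and $N'\le N$, which is exactly the statement.

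The main obstacle — really the only non-bookkeeping point — is the sparsification step: one must argue that a \emph{connected} collection of $k$ equal-sized boxes always contains a well-separated sub-collection of size proportional to $k$, and track that the proportionality constant $M$ depends only on $C$. The greedy argument above does this cleanly; the rest is the textbook lattice-animal count and a choice of $p$ small enough to beat the entropy factor $C_0$. One should also note the subtle but trivial point that the finite-dependence hypothesis is phrased for boxes that are pairwise separated, so the greedy selection must enforce pairwise (not merely consecutive) separation, which the stated greedy procedure does.
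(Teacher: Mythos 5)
Your proposal is correct and follows essentially the same route as the paper: a union bound over lattice animals (the paper counts them via a surrounding-contour encoding giving $(N/N')^2 8^{2k}$, you via the standard $C_0^k$ per-site bound, which is equivalent for present purposes), combined with the same greedy sparsification extracting $\geq k/M$ pairwise $CN'$-separated boxes so that the $(N,N',C,p)$-condition yields $p^{k/M}$, and then a choice of $p$ small enough to beat the entropy. No gaps.
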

\begin{proof}
 On the one hand, the number of lattice animals of size exactly $k$ is bounded by  $(\tfrac{N}{N'})^2 8^{2k}$ (the bound comes from first choosing a starting box, and then encoding the lattice animal by a  surrounding contour on $ \mathcal B(N, N')$ of length $2k$). On the other hand, for any $k$ such boxes, we can extract a sub-collection of $ck$ boxes (here $c>0$ is a constant that depends only on $C$) such that the pairwise distances of boxes in this sub-collection are at least $CN'$; hence the probability that all these $k$ boxes are open is at most $p^{ck}$. The proof of the lemma is then completed by a simple union bound, employing the $(N, N', C, p)$-condition.
\end{proof}

 \begin{figure}[h]
  \includegraphics[width=15cm]{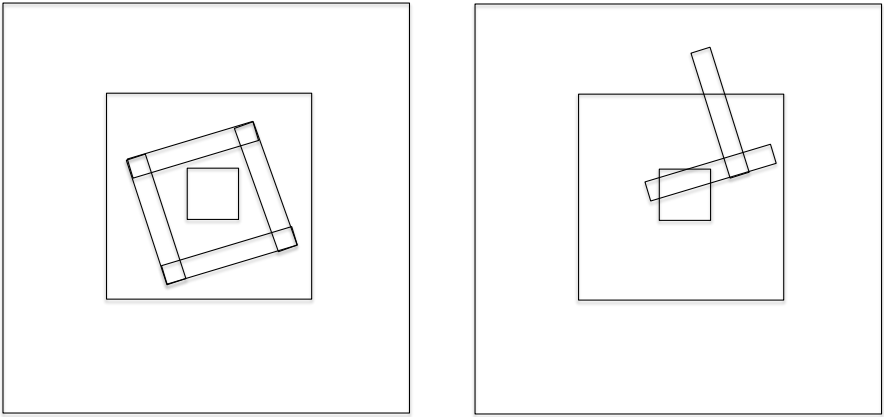}
\\ \vspace{-5.5cm} \caption{On both left and right, the three concentric square boxes are $\Lambda_N$, $\Lambda_{N/8}$ and $\Lambda_{N/32}$ respectively. On the left, the four rectangles are $A_1, A_2, A_3, A_4$ and on the right the two rectangles are $A_a, A_b$.}\label{figure}
\end{figure}

\begin{proof}[Proof of Proposition~\ref{prop-zero-crossing-dimension}]
Let $N'= N^{1 - (\frac{\alpha - 1}{10} \wedge \frac{1}{10})}$, where $\alpha$ is as in \eqref{eq-zero-box-counting-weak}. For each $B\in \mathcal B(N, N')$, we say $B$ is open if $d_{\mathcal C^{B^{\mathrm{Large}}}} (\partial B, \partial B^{\mathrm{large}}) \leq (N')^{\alpha}$, where $B^{\mathrm{large}}$ is the box  concentric with $B$ of doubled side length and $B^{\mathrm{Large}}$ (as we recall) is a concentric box of $B$ with side length $32\ell_B$. By \eqref{eq-zero-box-counting-weak}, we see that  this percolation process satisfies the $(N, N', 64,p)$-condition where $p \to 0$ as $N\to \infty$. Now,  in order that $ d_{\mathcal C^{\Lambda_N}} (\partial \Lambda_{N/4}, \partial \Lambda_{N/2}) \leq (N')^{\alpha}$, there must exist an open lattice animal on  $\mathcal B(N, N')$ of size at least $\frac{N}{16N'}$. Applying Lemma~\ref{lem-zero-enhance} completes the proof of Proposition~\ref{prop-zero-crossing-dimension} (since $(\alpha(1 - (\frac{\alpha - 1}{10} \wedge \frac{1}{10}))>1$).
\end{proof}

\subsection{Proof of Theorem~\ref{thm-main-zero}}\label{sec-zero-main-thm}

In this subsection, we will show that the probability for $\{o\in \mathcal C^{\Lambda_N}\}$ has a polynomial decay with large power (Lemma~\ref{lem-zero-m-N-bound}), which then yields Theorem~\ref{thm-main-zero} by a standard application of Lemma~\ref{lem-zero-enhance}. In order to prove Lemma~\ref{lem-zero-m-N-bound}, we first provide a bound on the probability for $\{o\in \mathcal C^{\Lambda_N}_*\}$ (Lemma~\ref{lem-zero-m-star}), whose proof crucially relies on Proposition~\ref{prop-zero-crossing-dimension}.

Let $\alpha>1$ be as in Proposition~\ref{prop-zero-crossing-dimension} (note that we can assume without loss that $\alpha \leq 2$). Let $\sqrt{1/\alpha} < \alpha' < 1$ (and thus we have $\alpha(\alpha')^2 > 1$).
\begin{lemma}\label{lem-zero-m-star}
For $N^\diamond \geq 16$, set $\Delta = (N^\diamond)^{-\alpha(\alpha')^2}$ and let $\tilde h^{(N)}$ be defined as in \eqref{eq-zero-def-tilde-h} for $N\leq N^\diamond$. Write $m^\diamond_N = m^\diamond_N(N^\diamond) =  \P(o \in \mathcal C_*^{\Lambda_N})$. Then there exists $C= C(\epsilon)>0$ such that  $m^\diamond_{N^\diamond} \leq C (N^\diamond)^{-6}$.
\end{lemma}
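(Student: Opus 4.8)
\textbf{Proof plan for Lemma~\ref{lem-zero-m-star}.}
The plan is to combine the perturbative dichotomy of Lemma~\ref{lem-zero-perturbation} with the geodesic length bound of Proposition~\ref{prop-zero-crossing-dimension}, using the perturbation size $\Delta = (N^\diamond)^{-\alpha(\alpha')^2}$, and then to run a self-improving recursion on the scales $N = (N^\diamond)^{(\alpha')^j}$ down to a constant. First I would apply Lemma~\ref{lem-zero-perturbation} with $K = (N^\diamond)^{\alpha(\alpha')^2}$ (essentially $1/\Delta$). On the event $\{o\in \mathcal C_*^{\Lambda_{N^\diamond}}\}$, Proposition~\ref{prop-zero-crossing-dimension} applied at scale $N^\diamond$ says that except on an event of probability $\kappa^{-1}e^{-(N^\diamond)^\kappa}$ we have $d_{\mathcal C^{\Lambda_{N^\diamond}}}(\partial\Lambda_{N^\diamond/4}, \partial\Lambda_{N^\diamond/2}) \geq (N^\diamond)^\alpha \geq K$, and since $\mathcal C_*^{\Lambda_{N^\diamond}}\subseteq \mathcal C^{\Lambda_{N^\diamond}}$ the same lower bound (with the convention $\min\emptyset = \infty$) holds a fortiori for $d_{\mathcal C_*^{\Lambda_{N^\diamond}}}$ — so condition (a) of Lemma~\ref{lem-zero-perturbation} holds. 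Hence condition (b) must fail: $|\mathcal C_*^{\Lambda_{N^\diamond}}\cap\Lambda_{N^\diamond/4}|\cdot\Delta \leq \tfrac{8}{K}|\mathcal C_*^{\Lambda_{N^\diamond}}\cap\mathcal A_{N^\diamond/2}|$, i.e. $|\mathcal C_*^{\Lambda_{N^\diamond}}\cap\Lambda_{N^\diamond/4}| \leq 8\Delta^{-1}K^{-1}|\mathcal C_*^{\Lambda_{N^\diamond}}\cap\mathcal A_{N^\diamond/2}|$, and with $\Delta\approx 1/K$ the prefactor is $O(1)$ — wait, more carefully, $\Delta^{-1}K^{-1} = 1$ here, so $|\mathcal C_*^{\Lambda_{N^\diamond}}\cap\Lambda_{N^\diamond/4}| \leq 8|\mathcal C_*^{\Lambda_{N^\diamond}}\cap\mathcal A_{N^\diamond/2}|$.

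Next I would convert this into a probability bound for $\{o\in\mathcal C_*^{\Lambda_{N^\diamond}}\}$ by a first-moment/averaging argument. Taking expectations and using translation invariance (applied to copies of $\Lambda_{N^\diamond/4}$ tiling, or rather comparing the origin to a typical point), $\P(o\in\mathcal C_*^{\Lambda_{N^\diamond}})$ controls $\E|\mathcal C_*^{\Lambda_{N^\diamond}}\cap\Lambda_{N^\diamond/4}|$ up to the volume factor; meanwhile $\E|\mathcal C_*^{\Lambda_{N^\diamond}}\cap\mathcal A_{N^\diamond/2}|$ is, by monotonicity \eqref{eq-zero-monotonicity} and the independence/stochastic-domination trick used in Lemma~\ref{lem-zero-bound-hard-crossing} (cover $\mathcal A_{N^\diamond/2}$ by $O(1)$ translated copies of a smaller box whose $\mathcal C_\diamond$ dominates the restriction of $\mathcal C_*^{\Lambda_{N^\diamond}}$), bounded by a constant times the volume of $\mathcal A_{N^\diamond/2}$ times the probability that the origin of one such smaller box, at scale $\sim N^\diamond/2$ but translated away from the center, is a disagreement. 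The key point is that by Proposition~\ref{prop-zero-crossing-dimension} and the percolation enhancement argument (Lemma~\ref{lem-zero-enhance}), a disagreement-free annulus surrounds that smaller box with overwhelming probability, so such a point being in $\mathcal C_*$ already forces $m^\diamond$ at a smaller scale — this is how the recursion is seeded.

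The cleaner way to phrase the recursion: I expect the lemma is really proved by showing $m^\diamond_N \leq C\,N'^{\,2}\big(m^\diamond_{N'} + e^{-N^\kappa}\big)\cdot(\text{something decaying})$ for $N' = N^{\alpha'}$, or more likely a bound of the shape $m^\diamond_N \lesssim N^{-c} m^\diamond_{N^{\alpha'}} + (\text{error})$, iterated $O(1/\log\alpha')$ — no, $O(\log\log N^\diamond)$ — times until the scale drops below $\ell_0$ where $m^\diamond$ is trivially bounded by $1$. Each iteration replaces $\Delta = (N^\diamond)^{-\alpha(\alpha')^2}$ by the requirement $\Delta \geq 1/K$ with $K = (N)^{\alpha}$ at scale $N = (N^\diamond)^{(\alpha')^j}$; since $\alpha(\alpha')^2 > 1 > (\alpha')^2\cdot$ something, the chosen $\Delta$ remains $\leq N^{-\alpha}$ at every scale $N \leq N^\diamond$ in the chain, which is exactly why the hypothesis fixes $\Delta$ in terms of $N^\diamond$ rather than the running scale. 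Carefully tracking the polynomial gains $N^{-\text{const}}$ accumulated over the $\Theta(\log\log N^\diamond)$ scales, together with the constraint $\alpha(\alpha')^2>1$, should yield a total gain of at least $(N^\diamond)^{-6}$; adjusting $\alpha'$ closer to $1$ (allowed since only $\sqrt{1/\alpha}<\alpha'<1$ is required) buys the exponent $6$.

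\textbf{Main obstacle.} The delicate part is not Lemma~\ref{lem-zero-perturbation} itself but the bookkeeping that turns the pointwise inequality $|\mathcal C_*\cap\Lambda_{N^\diamond/4}| \leq 8|\mathcal C_*\cap\mathcal A_{N^\diamond/2}|$ into a genuine \emph{contraction} $m^\diamond_{N^\diamond} \leq (\text{small})\cdot m^\diamond_{\text{smaller scale}}$: one must (i) relate the expected disagreement count in the annulus back to $m^\diamond$ at a smaller scale via monotonicity and independence of the enlarged boxes, being careful that the perturbation field $\tilde h^{(N)}$ is consistently defined across scales (the excerpt fixes $\Delta$ globally precisely to enable this), and (ii) handle the low-probability bad event from Proposition~\ref{prop-zero-crossing-dimension} — which is only stretched-exponentially small, hence negligible against any polynomial — and (iii) make sure the $\Theta(\log\log N^\diamond)$-fold iteration does not let the accumulated constants $C(\epsilon)$ blow up faster than the polynomial gain, which forces one to extract a definite multiplicative gain $N^{-\eta}$ with $\eta>0$ absolute (coming from $\alpha(\alpha')^2 - 1 > 0$) at \emph{each} step rather than an additive one.
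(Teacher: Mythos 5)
You have the right ingredients (Lemma~\ref{lem-zero-perturbation}, Proposition~\ref{prop-zero-crossing-dimension}, a first-moment/translation-invariance step, and a multi-scale recursion), but there is a genuine gap at the very first quantitative step: you apply Lemma~\ref{lem-zero-perturbation} with $K=1/\Delta=(N^\diamond)^{\alpha(\alpha')^2}$, and as you yourself compute, the failure of condition (b) then only gives $|\mathcal C_*^{\Lambda_N}\cap\Lambda_{N/4}|\le 8\,|\mathcal C_*^{\Lambda_N}\cap\mathcal A_{N/2}|$. After taking expectations and using translation invariance this yields $m^\diamond_{2N}\le 256\, m^\diamond_{N/2}$, which is weaker than the trivial monotonicity bound and produces no decay whatsoever; the subsequent recursion you sketch has no identified source for the per-step factor $N^{-c}$, and the appeal to ``a disagreement-free annulus surrounds the smaller box with overwhelming probability'' is circular (that is essentially what one is trying to prove). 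The paper's proof instead takes $K=(N^\diamond)^{\alpha\alpha'}$, which is \emph{much larger} than $1/\Delta$ because $\alpha'<1$; condition (a) still holds with overwhelming probability for every scale $N\ge (N^\diamond)^{\alpha'}$ since Proposition~\ref{prop-zero-crossing-dimension} gives intrinsic distance at least $N^\alpha\ge(N^\diamond)^{\alpha\alpha'}=K$, while the failure of (b) now reads $|\mathcal C_*^{\Lambda_N}\cap\Lambda_{N/4}|\le \tfrac{8}{K\Delta}|\mathcal C_*^{\Lambda_N}\cap\mathcal A_{N/2}|$ with $\tfrac{8}{K\Delta}=8(N^\diamond)^{-\alpha\alpha'(1-\alpha')}$ polynomially small. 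This gap between $K$ and $1/\Delta$ is the entire engine of the lemma and is precisely the reason the exponent $\alpha'$ is introduced; your choice $K=1/\Delta$ discards it.

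The remaining architecture also differs from, and is less workable than, the paper's: the paper proves the single-step contraction $m^\diamond_{2N}\le K^{-(1-\alpha')/2}\,m^\diamond_{N/2}$ for all $N\in[(N^\diamond)^{\alpha'},N^\diamond]$ (by contradiction, converting the expectation inequality into a positive-probability pointwise instance using that the counts are integer-valued and at most $N^2$, and then beating the stretched-exponential failure probability of Proposition~\ref{prop-zero-crossing-dimension}), and iterates it a bounded number of times over scales decreasing by a factor of $4$ — each iteration already contributing a fixed polynomial factor $(N^\diamond)^{-\alpha\alpha'(1-\alpha')/2}$, so constantly many iterations suffice for the exponent $6$. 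Your proposed $\Theta(\log\log N^\diamond)$-fold recursion over scales $(N^\diamond)^{(\alpha')^j}$ is not needed and, without the contraction factor above, cannot be started. You correctly note that $\mathcal C_*\subset\mathcal C$ lets you transfer the distance lower bound, and that the bad event is negligible against polynomials, but the proof as proposed does not close.
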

\begin{remark}
(1) In this lemma, regardless of the size of the box under consideration, the amount of perturbation $\Delta$ in our field $\tilde h^{(N)}$ only depends on $N^\diamond$. This is crucial for  \eqref{eq-zero-monotonicity-consequence} below.
(2) Since $\alpha(\alpha')^2 > 1$, we have that $\Delta \ll 1/N^\diamond$ (this is crucial for getting a large power in the polynomial bound as in Lemma~\ref{lem-zero-m-N-bound}). (3) Since our perturbation  $\Delta = (N^\diamond)^{-\alpha(\alpha')^2}$ applies to all $N\leq N^\diamond$, when $N$ is very small in comparison of $N^\diamond$ the perturbation is possibly too mild and thus we may not have a good control on $\mathcal C^{\Lambda_N}_*$. However, this is not a problem because in the proof below we will only consider $N \geq (N^\diamond)^{\alpha'}$ (for which the perturbation is still significant).
\end{remark}
\begin{proof}
Write $K = (N^\diamond)^{\alpha \alpha'}$.
We claim it suffices to show that there exists $N_0 = N_0(\epsilon)$ such that  for  $N^\diamond \geq N_0$
\begin{equation}\label{eq-zero-m-recursion-star}
m^\diamond_{2N} \leq K^{-\frac{1-\alpha'}{2}} m^\diamond_{N/2} \mbox{ for }  (N^\diamond)^{\alpha'} \leq  N \leq N^\diamond\,.
\end{equation}
Indeed, since $K = (N^\diamond)^{\alpha \alpha'}$ we have $K^{-\frac{1-\alpha'}{2}} \ll 1$ for large $N^\diamond$, and thus \eqref{eq-zero-m-recursion-star} yields the desired bound on $m^\diamond(N^\diamond)$ by recursion.

We now turn to the proof of \eqref{eq-zero-m-recursion-star}.
Suppose that \eqref{eq-zero-m-recursion-star} fails for some  $(N^\diamond)^{\alpha'} \leq  N \leq N^\diamond$. Since $\Lambda_N \subset v + \Lambda_{2N}$ for all $v\in \Lambda_{N/4}$ and $v + \Lambda_{N/2} \subset \Lambda_N$ for all $v\in \mathcal A_{N/2}$,  by \eqref{eq-zero-monotonicity} we see
\begin{equation}\label{eq-zero-monotonicity-consequence}
\E|\mathcal C_*^{\Lambda_N} \cap \Lambda_{N/4}|  \geq \frac{N^2}{32} m^\diamond_{2N} \mbox{ and } \E |\mathcal C_*^{\Lambda_N} \cap \mathcal A_{N/2}| \leq N^2 m^\diamond_{N/2}\,.
\end{equation}
Together with the assumption that \eqref{eq-zero-m-recursion-star} fails, this yields that
$$ \E|\mathcal C_*^{\Lambda_N} \cap \Lambda_{N/4}|  > 32^{-1} K^{-\frac{1-\alpha'}{2}}\E |\mathcal C_*^{\Lambda_N} \cap \mathcal A_{N/2}|\,.$$
Since $|\mathcal C_*^{\Lambda_N} \cap \Lambda_{N/4}| $ and $|\mathcal C_*^{\Lambda_N} \cap \mathcal A_{N/2}|$ are integer-valued and are at most $N^2$, the preceding inequality implies that (recall that $\alpha' > 1/\sqrt{\alpha} \geq 1/\sqrt{2}$)
$$ \P(|\mathcal C_*^{\Lambda_N} \cap \Lambda_{N/4}|  > 64^{-1} K^{-\frac{1-\alpha'}{2}} |\mathcal C_*^{\Lambda_N} \cap \mathcal A_{N/2}|) \geq \frac{1}{32 N^3}\,.$$
Now, set $N_0 = N_0(\epsilon)$ sufficiently large so that
\begin{equation}\label{eq-zero-N-0}
\frac{1}{10^6 N^3} > \kappa^{-1} e^{- N^{\kappa}} \mbox{and } 64^{-1} K^{-\frac{1-\alpha'}{2}} > \frac{8}{K\Delta} \mbox{ for all } N\geq (N_0)^{\alpha'}\,.
\end{equation}
Therefore, by Proposition~\ref{prop-zero-crossing-dimension}, there is a positive probability such that
$$|\mathcal C_*^{\Lambda_N} \cap \Lambda_{N/4}|  > 64^{-1} K^{-\frac{1-\alpha'}{2}} |\mathcal C_*^{\Lambda_N} \cap \mathcal A_{N/2}|\mbox{ and } d_{\mathcal C_*^{\Lambda_N}}(\partial \Lambda_{N/4}, \partial \Lambda_{N/2}) \geq K\,.$$
In particular, there exists at least one instance for the two events in the preceding display to occur simultaneously.
This contradicts Lemma~\ref{lem-zero-perturbation}, thus completing the proof of the lemma.
\end{proof}

In the proof of Lemma~\ref{lem-zero-m-N-bound} below, it is important for us to have independence between different scales. To this end, it is useful to consider a perturbation which only occurs in an annulus. In order to make a difference in notation from the previous perturbation (which occurs in a whole box), for $\Delta(N)>0$ we define (we emphasize the dependence of $\Delta$ on $N$ in the notation here since later in Lemma~\ref{lem-zero-m-N-bound} we will consider perturbations for different $N$'s simultaneously)
\begin{equation}\label{eq-zero-def-tilde-h-annulus}
\hat h^{(N)}_v =
\begin{cases}
h_v + \Delta(N) &\mbox{ for } v\in \Lambda_{N} \setminus \Lambda_{N/4}\,, \\
 h_v &\mbox{ for }v \in \Lambda_{N/4}\,.
\end{cases}
\end{equation}
We then define $\hat {\mathcal C}^{\Lambda_N}$ similar to $\mathcal C^{\Lambda_N}$ but with respect to the field $\{\hat h^{N}_v: v\in \Lambda_N\}$. Further, let $\mathcal C_\star^{\Lambda_N} = \mathcal C^{\Lambda_N} \cap \hat {\mathcal C}^{\Lambda_N}$ (so $\mathcal C_\star^{\Lambda_N}$ is a version of $\mathcal C_*^{\Lambda_N}$, but it replaces $\tilde {\mathcal C}^{\Lambda_N}$ with $\hat {\mathcal C}^{\Lambda_N}$ in its definition).
\begin{lemma}\label{lem-zero-m-annulus}
Let $\Delta(N) = (N/4)^{-\alpha(\alpha')^2}$ and define $
\{\hat h^{(N)}_v: v\in \Lambda_N\}$ as in \eqref{eq-zero-def-tilde-h-annulus}.
Then there exists $C=C(\epsilon)>0$ such that $\P(o\in \mathcal C_\star^{\Lambda_N}) \leq C N^{-5}$.
\end{lemma}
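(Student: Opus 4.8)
The plan is to reduce the annulus-perturbation statement to the whole-box-perturbation statement of Lemma~\ref{lem-zero-m-star}, exploiting the fact that the event $\{o\in\mathcal C_\star^{\Lambda_N}\}$ only "sees" the ground states in a neighborhood of the origin, together with the monotonicity relation \eqref{eq-zero-monotonicity}. Concretely, I would set $N^\diamond = N/4$ and compare the field $\hat h^{(N)}$, which is perturbed by $\Delta(N)=(N^\diamond)^{-\alpha(\alpha')^2}$ precisely on the annulus $\Lambda_N\setminus\Lambda_{N/4}$ and left untouched on $\Lambda_{N/4}$, with the field $\tilde h^{(N/4)}$ of \eqref{eq-zero-def-tilde-h}, which is perturbed by the \emph{same} amount $\Delta$ but now on all of $\Lambda_{N/4}$. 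The key observation is that $o\in\mathcal C_\star^{\Lambda_N}=\mathcal C^{\Lambda_N}\cap\hat{\mathcal C}^{\Lambda_N}$, and by \eqref{eq-zero-monotonicity} we have $\mathcal C^{\Lambda_N}\cap\Lambda_{N/4}\subset\mathcal C^{\Lambda_{N/4}}$ and, applying the same monotonicity to the perturbed field, $\hat{\mathcal C}^{\Lambda_N}\cap\Lambda_{N/4}\subset\hat{\mathcal C}^{\Lambda_{N/4}}$; but on $\Lambda_{N/4}$ the field $\hat h^{(N)}$ coincides with the original field $h$, so $\hat{\mathcal C}^{\Lambda_{N/4}}=\mathcal C^{\Lambda_{N/4}}$ there. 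This does not yet bring in the perturbation $\tilde h^{(N/4)}$, so I would instead argue directly: one inclusion shows $\{o\in\mathcal C^{\Lambda_N}\}\subset\{o\in\mathcal C^{\Lambda_{N/4}}\}$, and I must separately capture that $o\in\hat{\mathcal C}^{\Lambda_N}$ forces a disagreement for the $\tilde h^{(N/4)}$-field on the smaller box.

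The cleaner route, which I would actually carry out, is this. The perturbation $\hat h^{(N)}$ adds a nonnegative quantity to the field everywhere on $\Lambda_N\setminus\Lambda_{N/4}$; by the monotonicity underlying \eqref{eq-zero-monotonicity} (raising the field on $A\setminus B$ only raises spins there and hence, by the same FKG/monotonicity argument, can only shrink disagreements inside $B$ after we additionally add $\Delta$ on $B$ itself — i.e. the disagreement set for the more-perturbed field is dominated), one gets
\[
\mathcal C^{\Lambda_N}\cap\hat{\mathcal C}^{\Lambda_N}\cap\Lambda_{N/4}\ \subset\ \mathcal C^{\Lambda_{N/4}}\cap\tilde{\mathcal C}^{\Lambda_{N/4}}\ =\ \mathcal C_*^{\Lambda_{N/4}},
\]
where on the right $\mathcal C_*$ is taken with respect to the whole-box perturbation of magnitude $\Delta=(N/4)^{-\alpha(\alpha')^2}$ on $\Lambda_{N/4}$, exactly as in Lemma~\ref{lem-zero-m-star} with $N^\diamond=N/4$. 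In particular $\{o\in\mathcal C_\star^{\Lambda_N}\}\subset\{o\in\mathcal C_*^{\Lambda_{N/4}}\}$, so $\P(o\in\mathcal C_\star^{\Lambda_N})\le m^\diamond_{N/4}(N/4)\le C(N/4)^{-6}\le C' N^{-5}$ by Lemma~\ref{lem-zero-m-star}, which is the claimed bound (indeed with an extra power to spare).

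The step I expect to be the main obstacle — and the one that needs care rather than routine manipulation — is justifying the inclusion $\mathcal C^{\Lambda_N}\cap\hat{\mathcal C}^{\Lambda_N}\cap\Lambda_{N/4}\subset\mathcal C_*^{\Lambda_{N/4}}$. This requires two distinct uses of monotonicity of the RFIM ground state in the external field: first, that increasing the field on the outer annulus $\Lambda_N\setminus\Lambda_{N/4}$ while keeping $\pm$-boundary conditions on $\partial\Lambda_N$ produces ground states that, restricted to $\Lambda_{N/4}$, are sandwiched between the $\pm$ ground states on $\Lambda_{N/4}$ with the \emph{unperturbed} field; and second, that further adding $\Delta>0$ uniformly on $\Lambda_{N/4}$ can only turn $\zero$'s into $\plus$'s, so a vertex that is a disagreement for both $h$ and $\hat h^{(N)}$ (viewed on $\Lambda_N$) is, after passing to $\Lambda_{N/4}$, a disagreement for both $h$ and $h+\Delta$ there. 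Both facts are instances of \eqref{eq-zero-monotonicity} and of the field-monotonicity cited from \cite[Section 2.2]{AP18}, but the bookkeeping of which box and which field governs which inclusion is the delicate part; once that is pinned down, the probability bound is an immediate consequence of Lemma~\ref{lem-zero-m-star}.
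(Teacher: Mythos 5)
Your reduction to Lemma~\ref{lem-zero-m-star} founders on exactly the inclusion you flag as the main obstacle: $\mathcal C^{\Lambda_N}\cap\hat{\mathcal C}^{\Lambda_N}\cap\Lambda_{N/4}\subset \mathcal C_*^{\Lambda_{N/4}}$ is not a consequence of monotonicity, and the monotonicity you invoke points the wrong way. Field-monotonicity of the ground state says each spin $\sigma^{\pm}_v$ is nondecreasing in the field; hence raising the field by $\Delta$ on $\Lambda_{N/4}$ can turn a $\zero$ into a $\plus$ (destroying a disagreement) or a $\minus$ into a $\zero$ (creating one), so the disagreement set is \emph{not} monotone under field increases in either direction. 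Concretely, from $o\in\hat{\mathcal C}^{\Lambda_N}$ and \eqref{eq-zero-monotonicity} you only learn that $o$ is a disagreement on $\Lambda_{N/4}$ for the field $\hat h^{(N)}|_{\Lambda_{N/4}}=h|_{\Lambda_{N/4}}$, i.e.\ $o\in\mathcal C^{\Lambda_{N/4}}$, which you already had from the unperturbed system; nothing forces $\tilde\sigma^{\Lambda_{N/4},-}_o=-1$ for the field $h+\Delta$, since that ground state sees a \emph{lower} boundary condition on $\partial\Lambda_{N/4}$ but a \emph{higher} field than $\hat\sigma^{\Lambda_N,-}$, and these push in opposite directions. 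Indeed the whole content of Lemma~\ref{lem-zero-m-star} is that surviving a $+\Delta$ perturbation at and around $o$ is rare; membership in $\hat{\mathcal C}^{\Lambda_N}$ only certifies survival of a perturbation supported on the distant annulus, a genuinely weaker requirement. (The fact that your route would yield the stronger bound $CN^{-6}$ is itself a warning sign.)

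The paper avoids comparing perturbations at the origin altogether. For each $v\in\partial\Lambda_{N/2}$ it places a translated copy $B_v$ of $\Lambda_{N/4}$ centered at $v$; since $B_v\subset\Lambda_N\setminus\Lambda_{N/4}$, the field $\hat h^{(N)}$ equals $h+\Delta(N)$ \emph{uniformly} on $B_v$, so \eqref{eq-zero-monotonicity} applied to both the unperturbed and the perturbed systems gives $\P(v\in\mathcal C_\star^{\Lambda_N})\le m^\diamond_{N/4}(N/4)\le CN^{-6}$, and a union bound over the $O(N)$ vertices of $\partial\Lambda_{N/2}$ yields $\P(\partial\Lambda_{N/2}\cap\mathcal C_\star^{\Lambda_N}\neq\emptyset)\le CN^{-5}$ (this union bound is where the exponent drops from $6$ to $5$). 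Finally, Lemma~\ref{lem-zero-percolation-property} --- itself a nontrivial energy argument, applicable here because $\hat h^{(N)}-h\ge 0$ --- shows that every vertex of $\mathcal C_\star^{\Lambda_N}$ is connected within $\mathcal C_\star^{\Lambda_N}$ to $\partial\Lambda_N$, so $o\in\mathcal C_\star^{\Lambda_N}$ forces $\mathcal C_\star^{\Lambda_N}$ to meet $\partial\Lambda_{N/2}$. You would need to restore both ingredients (the off-center copies of $\Lambda_{N/4}$ and the percolation-to-the-boundary property) for the argument to go through.
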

\begin{proof}
For $v\in \partial \Lambda_{N/2}$, let $B_v$ be a translated copy of $\Lambda_{N/4}$ centered at $v$.
Thus, for all $u \in B_v$ we have $\hat h^{(N)}_u = h_u + (N/4)^{-\alpha(\alpha')^2}$. Recall $m^\diamond_{N/4}(N/4)$ as in Lemma~\ref{lem-zero-m-star}.  By \eqref{eq-zero-monotonicity} and Lemma~\ref{lem-zero-m-star},
$$\P(v\in \mathcal C_\star^{\Lambda_N}) \leq m^\diamond_{N/4}(N/4) \leq C N^{-6}\,.$$
 Hence,
$\P(\partial \Lambda_{N/2} \cap \mathcal C_\star^{\Lambda_N} \neq \emptyset) \leq C N^{-5}$ by a simple union bound. Combined with Lemma~\ref{lem-zero-percolation-property} (and the simple observation that $o$ cannot percolate in $\mathcal C_\star^{\Lambda_N}$ to $\partial \Lambda_N$ if $\partial \Lambda_{N/2} \cap \mathcal C_\star^{\Lambda_N} = \emptyset$), this completes the proof of the lemma.
\end{proof}

\begin{lemma}\label{lem-zero-m-N-bound}
There exists $C= C(\epsilon)>0$ such that  $m_N \leq C N^{-3}$.
\end{lemma}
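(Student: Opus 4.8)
\textit{Proof plan.} Since $m_N$ is nonincreasing by \eqref{eq-zero-monotonicity}, it suffices to treat $N=2^n$. Write $M_k=N/4^k$ and $\mathcal A^{(k)}=\Lambda_{M_k}\setminus\Lambda_{M_{k+1}}$ for $k\ge 0$; the annuli $\mathcal A^{(k)}$ are pairwise disjoint. At each scale $M_k$ use the \emph{annulus} perturbation $\hat h^{(M_k)}$ of \eqref{eq-zero-def-tilde-h-annulus} with $\Delta(M_k)=(M_k/4)^{-\alpha(\alpha')^2}$, which raises the field exactly on $\mathcal A^{(k)}$, and set $\mathcal C_\star^{\Lambda_{M_k}}=\mathcal C^{\Lambda_{M_k}}\cap\hat{\mathcal C}^{\Lambda_{M_k}}$ as in Lemma~\ref{lem-zero-m-annulus}. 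The starting point is the disjoint decomposition $\{o\in\mathcal C^{\Lambda_{M_k}}\}=\{o\in\mathcal C_\star^{\Lambda_{M_k}}\}\sqcup\{o\in\mathcal C^{\Lambda_{M_k}},\,o\notin\hat{\mathcal C}^{\Lambda_{M_k}}\}$, whose first term is at most $CM_k^{-5}$ by Lemma~\ref{lem-zero-m-annulus}.

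The structural input on the second (``flip'') event is this. Since $\hat h^{(M_k)}\ge h$, monotonicity forces $\hat\sigma^{\Lambda_{M_k},+}_o=1$, so on this event $\hat\xi^{\Lambda_{M_k}}_o=\plus$, i.e.\ the perturbation raised the minus ground state at $o$ from $-1$ to $+1$. Arguing as in Lemma~\ref{lem-zero-percolation-property} (the ``$K=\infty$'' instance of Lemma~\ref{lem-zero-perturbation}, using uniqueness of the ground state), every connected component of the flip set $\{v:\sigma^{\Lambda_{M_k},-}_v\ne\hat\sigma^{\Lambda_{M_k},-}_v\}$ must touch the perturbed region $\mathcal A^{(k)}$; and since the field is unperturbed inside $\Lambda_{M_{k+1}}$, there $\sigma^{\Lambda_{M_k},-}$ and $\hat\sigma^{\Lambda_{M_k},-}$ are ground states in $\Lambda_{M_{k+1}}$ with boundary conditions squeezed between all-minus and all-plus, so by monotonicity in the boundary condition the flip set restricted to $\Lambda_{M_{k+1}}$ is contained in $\mathcal C^{\Lambda_{M_{k+1}}}$. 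Hence on the flip event $o\in\mathcal C^{\Lambda_{M_{k+1}}}$ and the flip component of $o$ reaches $\partial\Lambda_{M_{k+1}}$ inside $\mathcal C^{\Lambda_{M_{k+1}}}$. Moreover, because the scale-$M_k$ perturbation lives on $\mathcal A^{(k)}$, which is disjoint from $\Lambda_{M_{k+1}}$, the proof of Lemma~\ref{lem-zero-m-annulus} in fact gives the conditional bound $\P\big(o\in\mathcal C_\star^{\Lambda_{M_k}}\,\big|\,\{h_v:v\in\Lambda_{M_{k+1}}\}\big)\le CM_k^{-5}$ (the boxes $B_v$ used there are inside $\mathcal A^{(k)}$ and hence independent of the inner field), and the $\mathcal C_\star$-witnessing events at distinct scales are mutually independent since they are measurable with respect to the disjoint blocks $\{h_v:v\in\mathcal A^{(k)}\}$.

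I would then iterate this over $k=0,1,2,\dots$ down to a suitable scale $M_J$ and combine the per-scale estimates to bound $m_N$, bootstrapping the resulting polynomial bound to improve its exponent up to $m_N\le CN^{-3}$; the final step is then the standard enhancement via Lemma~\ref{lem-zero-enhance}, which also upgrades the conclusion to the exponential bound of Theorem~\ref{thm-main-zero}. The main obstacle is that the ``flip'' alternative at a given scale is not itself rare --- it is exactly the complement of the rare $\mathcal C_\star$ event given $o\in\mathcal C^{\Lambda_{M_k}}$ --- so a plain union over scales only reproduces the trivial $m_N\le m_{N/4}$. The point that must be exploited is that $\{o\in\mathcal C^{\Lambda_N}\}$ forces the disagreement cluster of $o$ to be threaded through \emph{all} the annuli $\mathcal A^{(k)}$, so the scale-$k$ events are forced to co-occur rather than merely compete; coupled with their independence across the disjoint annuli and the conditional $M_k^{-5}$ bound above, the probability of simultaneously surviving several scales becomes a \emph{product} of small factors, and this is what yields the polynomial decay. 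Pinning down the right scale-wise events --- ones that are both implied by $\{o\in\mathcal C^{\Lambda_N}\}$ and (conditionally on the inner field) independent --- is the crux of the argument, and is where Proposition~\ref{prop-zero-crossing-dimension} (controlling how the cluster crosses each annulus) enters once more.
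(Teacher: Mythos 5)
Your overall plan — work scale by scale on disjoint dyadic annuli, use the annulus perturbation and Lemma~\ref{lem-zero-m-annulus} at each scale, and turn the co-occurrence of scale-wise events into a product of small factors — is exactly the paper's strategy, but the proposal stops short of the one idea that actually makes the product work, and you say so yourself in the last paragraph. The event that is both implied by $\{o\in\mathcal C^{\Lambda_N}\}$ and small at scale $4^\ell$ is \emph{not} $\{o\in\mathcal C_\star^{\Lambda_{4^\ell}}\}$ (which, as you note, need not occur), but rather $\{o\in\mathcal C^{\Lambda_{4^\ell}}\}\cap E_\ell$ with $E_\ell=\{o\notin\mathcal C_\star^{\Lambda_{4^\ell}}\}$; after discarding $\cup_\ell E_\ell^c$ (cost $O(N^{-3})$ by Lemma~\ref{lem-zero-m-annulus}), one must show this intersection over scales is small. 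The paper's mechanism is Gaussian anticoncentration of the annulus sum: writing $h_v=|\mathfrak A_\ell|^{-1}h_{\mathfrak A_\ell}+g_v$ as in \eqref{eq-zero-Gaussian-conditioning} and conditioning on the $\sigma$-field $\mathcal F'_\ell$ of everything independent of $h_{\mathfrak A_\ell}$, monotonicity makes $\{o\in\mathcal C^{\Lambda_{4^\ell}}\}$ an interval event $\{h_{\mathfrak A_\ell}\in I_\ell\}$, and $E_\ell$ forces $h_{\mathfrak A_\ell}$ into the top subinterval of length $\le|\mathfrak A_\ell|\Delta(4^\ell)$ (otherwise shifting the annulus field up by $\Delta(4^\ell)$ would keep $o$ a disagreement, i.e.\ $o\in\mathcal C_\star^{\Lambda_{4^\ell}}$). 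Since $\var(h_{\mathfrak A_\ell})=\epsilon^2|\mathfrak A_\ell|$, the conditional probability is $O(4^{-\ell(\alpha(\alpha')^2-1)})$, and the tower property over the nested filtration $\mathcal F_\ell\subset\mathcal F'_{\ell+1}$ multiplies these factors. None of this appears in your write-up, and Proposition~\ref{prop-zero-crossing-dimension} is not where the missing piece lives — it has already done its work inside Lemmas~\ref{lem-zero-m-star} and \ref{lem-zero-m-annulus}.

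Two further concrete problems. First, your claim that the scale-wise $\mathcal C_\star$-events ``are mutually independent since they are measurable with respect to the disjoint blocks $\{h_v:v\in\mathcal A^{(k)}\}$'' is false: $\{o\in\mathcal C_\star^{\Lambda_{M_k}}\}$ depends on the field in all of $\Lambda_{M_k}$, not just on the annulus where the perturbation acts; the correct substitute is the conditional/filtration argument above. Second, your structural analysis of the ``flip'' event (the flip component reaching $\partial\Lambda_{M_{k+1}}$ inside $\mathcal C^{\Lambda_{M_{k+1}}}$), even if correct, does not produce any smallness — as you observe, the flip alternative is not rare given $o\in\mathcal C^{\Lambda_{M_k}}$ — so it cannot replace the interval/anticoncentration step. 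As written, the proposal does not prove the lemma.
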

\begin{proof}
A rough intuition behind the proof is as follows: the random field in each dyadic annulus has probability close to 1 to stop the event $\{o\in \mathcal C^{\Lambda_N}\}$ from occurring and thus altogether we get a polynomial upper bound with large power. In order to formalize the proof, we will apply Lemma~\ref{lem-zero-m-annulus} and employ a careful analysis to justify the ``independence'' among different scales.

Without loss of generality, let us only consider  $N = 4^n$ for some $n\geq 1$. For each such $N$,  define $
\{\hat h^{(N)}_v: v\in \Lambda_N\}$ as in \eqref{eq-zero-def-tilde-h-annulus} with $\Delta(N)= (N/4)^{-\alpha(\alpha')^2}$.
 Let $E_\ell = \{o\not\in \mathcal C_\star^{\Lambda_{4^{\ell}}}\}$ and $E = \cap_{0.9n \leq \ell \leq n} E_\ell$. (Note that there is no containment relation among the events $E_\ell$'s, since each event depends on a different perturbation.) By Lemma~\ref{lem-zero-m-annulus}, we see that $\P(E^c) \leq CN^{-3}$ for some $C = C(\epsilon)>0$ (whose value may be adjusted later in the proof). Write $\mathfrak A_{\ell} = \Lambda_{4^{\ell}} \setminus \Lambda_{4^{\ell-1}}$. For $0.9 n\leq \ell \leq n$, let $\mathcal F_\ell = \sigma(h_v: v\in \Lambda_{4^\ell})$ and write
\begin{equation}\label{eq-zero-Gaussian-conditioning}
h_v = (|\mathfrak A_{\ell}|)^{-1} h_{\mathfrak A_{\ell}} + g_v \mbox{ for } v\in \mathfrak A_{\ell}\,,
\end{equation}
where $\{g_v: v \in \mathfrak A_{\ell}\}$ is a mean-zero Gaussian process independent of $h_{\mathfrak A_{\ell}}$ and $\{g_v: v \in \mathfrak A_{\ell}\}$ for $0.9 n\leq \ell \leq n$ are mutually independent (note that $g_v$'s are linear combinations of a Gaussian process and their means and covariances can be easily computed). Let $\mathcal F'_\ell$ be the $\sigma$-field which contains every event in $\mathcal F_{\ell}$ that is independent of $h_{\mathfrak A_{\ell}}$ (so in particular $\mathcal F_\ell \subset \mathcal F'_{\ell+1} \subset \mathcal F_{\ell+1}$). By monotonicity,  there exists an interval $I_\ell$ measurable with respect to $\mathcal F'_{\ell}$ such that conditioned on $\mathcal F'_\ell$ we have $o \in \mathcal C^{\Lambda_{4^{\ell}}}$ if and only if $h_{\mathfrak A_{\ell}} \in I_\ell$. Let $I'_\ell$ be the maximal sub-interval of $I_\ell$ which shares the upper endpoint and with length $|I'_\ell| \leq  \frac{|\mathfrak A_{\ell}| \cdot 4^{\alpha (\alpha')^2}}{4^{ \alpha (\alpha')^2 \ell}}$.
By our definition of $E_{\ell}$, we see
 from \eqref{eq-zero-Gaussian-conditioning} that  conditioned on $\mathcal F'_\ell$ we have $\{o \in \mathcal C^{\Lambda_{4^{\ell}}}\} \cap E_{\ell}$ only if $h_{\mathfrak A_{\ell}} \in I'_\ell$. Thus, for $0.9 n \leq \ell \leq n$,
$$\P(\{o \in \mathcal C^{\Lambda_{4^{\ell}}}\} \cap E_{\ell} \mid \mathcal F'_\ell) \leq \P(h_{\mathfrak A_{\ell}}\in I'_\ell)\,.$$
Combined with the fact that $\var (h_{\mathfrak A_{\ell}}) = \epsilon^2 |\mathfrak A_{\ell}|$, this gives that
$$\P(\{o \in \mathcal C^{\Lambda_{4^{\ell}}}\} \cap E_{\ell} \mid \mathcal F'_\ell) \leq \frac{C}{4^{\ell(\alpha (\alpha')^2-1)}}\,.$$
Since $\{o \in \mathcal C^{\Lambda_{4^{n}}}\} \cap E = \cap_{\ell = 0.9n}^{n} (\{o \in \mathcal C^{\Lambda_{4^{\ell}}}\} \cap E_{\ell})$ and since $\{o \in \mathcal C^{\Lambda_{4^{\ell}}}\} \cap E_{\ell}$ is $\mathcal F_\ell$-measurable (and thus is $\mathcal F'_{\ell+1}$-measurable), we deduce that $\P(\{o\in \mathcal C^{\Lambda_N}\} \cap E) \leq C N^{-3}$. Combined with the fact that $\P(E^c) \leq CN^{-3}$, it completes the proof of the lemma.
\end{proof}

\begin{proof}[Proof of Theorem~\ref{thm-main-zero}]
Let $N_0 = N_0(\epsilon)$ be chosen later. For $B\in \mathcal B(N, N_0)$, we say $B$ is open if $\mathcal C^{B^{\mathrm{large}}} \cap B \neq \emptyset$. Clearly, this percolation process satisfies the $(N, N_0, 4, p)$-condition where
\begin{equation}\label{eq-zero-p-bound}
p  = \P(\mathcal C^{B^{\mathrm{large}}} \cap B \neq \emptyset) \leq N_0^2 m_{N_0/2} \leq CN_0^{-1}  \mbox{ for } C = C(\epsilon)>0\,.
\end{equation}
(The last transition above follows from Lemma~\ref{lem-zero-m-N-bound}.)
In addition, we note that in order for $o\in \mathcal C^{\Lambda_N}$, it is necessary that there exists an open lattice animal on $B\in \mathcal B(N, N_0)$ with size at least $\frac{N}{10N_0}$. Now, choosing $N_0$ sufficiently large (so that $p$ is sufficiently small, by \eqref{eq-zero-p-bound}) and applying Lemma~\ref{lem-zero-enhance} completes the proof.
\end{proof}

\section{Exponential decay at positive temperatures}\label{sec:positive-temperature}

In this section, we prove Theorem~\ref{thm-main} for the case of $T>0$.
Our proof method follows the basic framework presented in Section~\ref{sec:zero-temperature} for the case of $T=0$, which applies the result in \cite{AB99} in a crucial way. However, there seem to be significant additional obstacles due to the randomness of Ising measures at  positive temperatures.
For $T = 0$, it suffices to consider the ground state which is unique with probability 1, and thus ground states with different boundary conditions and external fields are naturally coupled together. In the case of $T>0$, on the one hand we try to carry out our analysis with validity for all reasonable (e.g., for all monotone couplings) couplings of  Ising measures whenever possible (see Section~\ref{sec-perturbation}); on the other hand it seems necessary to construct a coupling with some desirable properties in order to apply \cite{AB99} (see Section~\ref{sec-admissible-coupling}). Both of these require some new ideas as well as some delicate treatment.

Organization for the rest of this section is as follows. In Section~\ref{sec-perturbation}, we verify the hypothesis in \cite{AB99} via a perturbation argument and thereby prove that under any monotone coupling for Ising spins with plus/minus boundary conditions, the intrinsic distance for the induced graph on vertices with disagreements has dimension strictly larger than 1. The proof method is inspired by that of Proposition~\ref{prop-zero-crossing-dimension}, but the implementation is largely different with new tricks involved. In Section~\ref{sec-admissible-coupling}, we introduce the notion of adaptive admissible coupling and a multi-scale construction of an adaptive admissible coupling is then given in Section~\ref{sec-contruction-adaptive-coupling}. In Section~\ref{sec-another-perturbation}, we then introduce another perturbation argument, using which we analyze our adaptive admissible coupling in Section~\ref{sec-analysis-coupling} and prove a crucial estimate in Lemma~\ref{lem-bound-m}. In Section~\ref{sec-proof-main-thm}, we provide the proof of Theorem~\ref{thm-main} for $T>0$, which requires to employ an admissible coupling such that the disagreement percolates to the boundary.

\subsection{Intrinsic distance on disagreements via a perturbation argument} \label{sec-perturbation}

For any $A\subset \mathbb Z^2$, we continue to denote by $d_A (\cdot, \cdot)$ the intrinsic distance on $A$, i.e., the graph distance on the induced subgraph on $A$.  Let $\sigma^{\Lambda_N,  \pm}$ be spins sampled according to  $\mu^{\Lambda_N,  \pm}$. We will continue to use repeatedly the standard monotonicity properties of the Ising model with respect to external fields and boundary conditions (c.f. \cite[Section 2.2]{AP18} for detailed discussions). Let $\pi$ be a monotone coupling of $\mu^{\Lambda_N, \pm}$ (that is, under $\pi$ we have $\sigma^{\Lambda_N, +} \geq \sigma^{\Lambda_N, -}$) and let
\begin{equation}\label{eq-def-mathcal-C}
\mathcal C^{\Lambda_N} = \mathcal C^{\Lambda_N, \pi} = \{v\in \Lambda_N: \sigma^{\Lambda_N, +}_v > \sigma^{\Lambda_N, -}_v\}\,.
\end{equation}
(Note that $\pi$ depends on the random field $h$.)
In addition, denote by $\P \otimes \pi$ the joint measure of the external fields and the spin configurations (similar notations also apply below). The following proposition is the major goal of this section.
\begin{prop}\label{prop-crossing-dimension}
There exist $\alpha = \alpha(\epsilon, \beta) > 1$, $\kappa = \kappa(\epsilon, \beta)>0$ such that the following holds. For all $0<c\leq 1$, there exists $N_0 = N_0(\epsilon, \beta, c)$ such that for all $N\geq N_0$ and $1\leq N_1 \leq N_2 \leq N/2$ with $N_2-N_1 \geq N^c$  the following holds for all monotone coupling $\pi$ of $\mu^{\Lambda_N, \pm}$:
\begin{equation}\label{eq-box-counting}
\P \otimes \pi(d_{\mathcal C^{\Lambda_N}} (\partial \Lambda_{N_1}, \partial \Lambda_{N_2}) \leq (N_2-N_1)^{\alpha}) \leq \kappa^{-1} e^{- N^{\kappa c}}\,.
\end{equation}
\end{prop}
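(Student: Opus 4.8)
The plan is to mirror the strategy of Proposition~\ref{prop-zero-crossing-dimension}: first establish a crossing estimate for thin rectangles (the analogue of Lemma~\ref{lem-zero-assumption} and \eqref{eq-zero-crossing-prob}), feed it into \cite[Theorem 1.3]{AB99} to get a qualitative statement $\P\otimes\pi(d_{\mathcal C^{\Lambda_N}}(\partial\Lambda_{N_1},\partial\Lambda_{N_2})\le (N_2-N_1)^\alpha)\to 0$, and then amplify the decay to stretched-exponential via a multi-scale/renormalization argument (the analogue of Lemma~\ref{lem-zero-enhance} together with the Proof of Proposition~\ref{prop-zero-crossing-dimension}). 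The new ingredient at $T>0$ is that the ``disagreement set'' $\mathcal C^{\Lambda_N}$ depends on the choice of monotone coupling $\pi$, so every step must be uniform over all such $\pi$; in particular the stochastic-domination input $\mathcal C^{\Lambda_N}\cap A \preceq \mathcal C^{A^{\mathrm{Large}}}\cap A$ used in Lemma~\ref{lem-zero-assumption} must be re-derived, using monotonicity of the Ising model under boundary conditions together with the fact that one can always extend a monotone coupling on the smaller box to one on the bigger box (so that the disagreement event only shrinks). One also needs $\mathcal C^{A_i^{\mathrm{Large}}}$ for disjoint $A_i^{\mathrm{Large}}$ to be independent, which is immediate since the Ising measures (and hence any product/extension coupling) on disjoint boxes depend on disjoint portions of the field.

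The heart of the matter is the crossing estimate for a fixed thin rectangle $A$ of aspect ratio at least $a$, namely $\P\otimes\pi(\mathrm{Cross}(A,\mathcal C^{A^{\mathrm{Large}}}))\le 1-\delta$. At $T=0$ this came (via Lemma~\ref{lem-zero-bound-hard-crossing}) from the perturbative Lemma~\ref{lem-zero-perturbation}: if a $\zero$-contour surrounds an annulus, then flipping a large all-$\zero$ region simultaneously lowers the plus-Hamiltonian and the perturbed minus-Hamiltonian, a contradiction. At positive temperature there is no ground state to flip, so the replacement must be an energy/entropy estimate: one perturbs the field in a box by $\Delta\sim 1/N$, uses the Gaussian change-of-measure formula \eqref{eq-zero-change-of-measure} (which is the same for $\mu^{\Lambda_N,\pm}$ since it only involves the field) to show the perturbed and unperturbed measures are mutually contiguous (the analogue of Lemma~\ref{lem-zero-perturbation-contiguous}, now for $\P\otimes\pi$), and then argues that under the coupling a macroscopic disagreement region forces a comparison of magnetizations in the two boundary conditions that is destabilized by the perturbation — quantitatively, the expected number of disagreements in the inner box exceeds (up to constants) that in the surrounding annulus with positive probability, and this is inconsistent with a $\zero$-crossing. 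This is where I expect the work to concentrate: replacing the deterministic flip argument by a coupling-compatible inequality. The paper flags exactly this (``it seems necessary to construct a coupling with some desirable properties'', Section~\ref{sec-admissible-coupling}), so presumably the clean perturbation lemma of this section only needs monotonicity of $\pi$, and the sharper coupling is deferred; I would try to get the crossing bound using only: (i) monotonicity $\sigma^{\Lambda_N,+}\ge\sigma^{\Lambda_N,-}$, (ii) GKS/FKG-type monotonicity in the field, and (iii) the contiguity estimate — analogous to how \eqref{eq-zero-crossing-prob} followed from Lemma~\ref{lem-zero-bound-hard-crossing}.

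Once the single-rectangle crossing bound $1-\delta$ is in hand, the rest is essentially bookkeeping. Applying it to a bounded number of translated/rotated copies $A_1,\dots,A_4$ tiling an annulus $\Lambda_{N/8}\setminus\Lambda_{N/32}$ gives a uniform lower bound on the probability of a hard crossing of that annulus in $\mathcal C^{\Lambda_N}$ (as in \eqref{eq-zero-cross-A-hard}), and combined with the Aizenman–Burchard machinery \cite{AB99} — whose hypothesis is precisely the many-disjoint-rectangles bound $(1-\delta)^k$ that the independence-plus-domination argument supplies — one gets a length exponent $\alpha=\alpha(\epsilon,\beta)>1$ and the soft decay \eqref{eq-zero-box-counting-weak}. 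Finally, the renormalization in the Proof of Proposition~\ref{prop-zero-crossing-dimension} carries over verbatim: declare a box $B\in\mathcal B(N,N')$ open if $d_{\mathcal C^{B^{\mathrm{Large}}}}(\partial B,\partial B^{\mathrm{large}})\le(N')^\alpha$, note the $(N,N',64,p)$-condition with $p\to 0$ holds uniformly over monotone couplings (using domination and independence across disjoint $B^{\mathrm{Large}}$'s), observe that a short geodesic between $\partial\Lambda_{N_1}$ and $\partial\Lambda_{N_2}$ forces a long open lattice animal, and conclude via Lemma~\ref{lem-zero-enhance}; choosing $N'=(N_2-N_1)^{1-\eta}$ with $\eta$ small (so $\alpha(1-\eta)>1$) and tracking the exponents in terms of $N^c$ produces the stated bound $\kappa^{-1}e^{-N^{\kappa c}}$. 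The only genuinely new obstacle, to reiterate, is the positive-temperature perturbation lemma underlying the $1-\delta$ crossing bound; everything downstream is a uniform-in-$\pi$ transcription of Section~\ref{sec:zero-temperature}.
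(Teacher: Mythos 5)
Your architecture coincides with the paper's: a uniform-in-$\pi$ crossing bound for thin rectangles, stochastic domination plus independence of disjoint enlarged boxes to verify the Aizenman--Burchard hypothesis, and then the same lattice-animal renormalization as in Proposition~\ref{prop-zero-crossing-dimension}. All of that downstream bookkeeping is indeed how the paper proceeds (see \eqref{eq-crossing-prob} and the three bullet points in the paper's proof of Proposition~\ref{prop-crossing-dimension}), and your observation that only monotonicity is needed for the domination/independence step is correct.

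The genuine gap is that the step you yourself flag as ``where I expect the work to concentrate'' --- the positive-temperature replacement of the spin-flip argument --- is not actually supplied, and the ingredients you list (monotonicity, FKG, contiguity) do not by themselves produce it. What the paper actually proves is Lemma~\ref{lem-crossing-key}: either the annulus crossing probability is already $\leq 1-\delta$, or the total magnetization gap $\sum_{v\in\Lambda_{N/8}}(\langle\sigma^+_v\rangle-\langle\sigma^-_v\rangle)$ is $\leq 10^{-3}N$ with probability $\geq\delta$, after which Markov's inequality \eqref{eq-Markov-inequality} kills the hard crossing (which forces $\geq N/32$ disagreements). The mechanism behind this is Lemma~\ref{lem-free-energy-differences-2}: one differentiates the log-partition-function in the perturbation parameter $t$, so that the second difference $(F^{S,\tau^+,1}_{\Omega^+}-F^{S,\tau^-,1}_{\Omega^-})-(F^{S,\tau^+,0}_{\Omega^+}-F^{S,\tau^-,0}_{\Omega^-})$ is bounded below by $\Delta\int_0^1(m^{+,t}-m^{-,t})\,dt$ and above by the number of boundary disagreements, with error terms $-\frac{1}{\beta}\log\mu(\Omega^\pm)$ coming from restricting to the crossing events $\Omega^\pm$. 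Three further devices are essential and absent from your sketch: (i) the restriction to $\Omega^\pm$ together with conditioning on the outermost plus/minus contour, which is what converts ``a crossing exists'' into stochastic domination of the \emph{enhanced} disagreements on $\Lambda_{N/32}$ (this is the positive-temperature surrogate for ``the flipped region is surrounded by a $\zero$-contour''); (ii) the two-parameter perturbation \eqref{eq-def-tilde-h} with $\Delta'=t^*\Delta$ on $\Lambda_N\setminus\Lambda_{N/8}$, where $t^*$ is chosen extremal as in \eqref{eq-Delta'} so that the quantities compared in the two directions (disagreements in $\mathcal A_{N/2}$ versus enhanced disagreements in $\Lambda_{N/32}$) refer to the same field; and (iii) summing the resulting inequality over the shells $N/4\leq k\leq N/2$, which produces the factor $N\Delta\asymp 1$ in \eqref{eq-Step-1} needed to make the two bounds clash. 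Without these, ``a macroscopic disagreement region is destabilized by the perturbation'' remains a heuristic rather than an inequality, so the proposal as written does not yet yield the $1-\delta$ crossing bound on which everything else rests.
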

\begin{remark}
(1) The preceding proposition is analogous to Proposition~\ref{prop-zero-crossing-dimension}. In the present case, it is crucial that the result holds for all monotone couplings (note that the intrinsic distance may depend on the coupling), so that we can apply it to couplings which we construct later.

(2) In Proposition~\ref{prop-crossing-dimension}, we introduce parameters $N_1, N_2$ (as opposed to $N_1 = N/4$ and $N_2 = N/2$ in Proposition~\ref{prop-zero-crossing-dimension}) for convenience of later applications. The condition that $N_2-N_1 \geq N^c$ is just to ensure that the decay in probability absorbs the number of choices for starting and ending points of the shortest path. This slight extension does not introduce complication to the proof.
\end{remark}
The proof of Proposition~\ref{prop-crossing-dimension} again crucially relies on the result of \cite{AB99}. In order to apply \cite{AB99}, the following lemma (analogous to Lemma~\ref{lem-zero-bound-hard-crossing}) is a key ingredient. For any annulus $\mathcal A$ and  $\mathcal C\subset \mathbb Z^2$, we continue to denote by $\mathrm{Cross}_{\mathrm{hard}}(\mathcal A, \mathcal C)$ the event that there is a contour in $\mathcal C$ which separates the inner and outer boundaries of $\mathcal A$. Let
\begin{equation}\label{eq-def-mathcal-E-pm}
\mathcal E^\pm = \mathcal E^\pm_N = \mathrm{Cross}_{\mathrm{hard}}(\Lambda_{N/8} \setminus \Lambda_{N/32}, \{v \in \Lambda_N: \sigma^{\Lambda_N, \pm}_v = \pm 1\})\,.
\end{equation}
\begin{lemma}\label{lem-crossing-key}
There exists $\delta=\delta(\epsilon, \beta)>0$ such that   for all  $N\geq 32$
 $$ \min\{\mathbb{P}\otimes\mu^{\Lambda_N, +}(\mathcal E^{+}), \P (\mbox{$\sum_{v\in \Lambda_{N/8}}$} (\langle  \sigma^{\Lambda_N, +}_v\rangle_{\mu^{\Lambda_N, +}} - \langle\sigma^{\Lambda_N, -}_v\rangle_{\mu^{\Lambda_N, -}}) > 10^{-3} N)\} \leq 1 - \delta\,.$$
 In particular, $\mathbb{P}\otimes \pi(\mathrm{Cross}_{\mathrm{hard}}(\Lambda_{N/8}\setminus\Lambda_{N/32}, \mathcal{C}^{\Lambda_N}))\leq 1-\delta$ for all monotone coupling $\pi$ of $\mu^{\Lambda_N, \pm}$.
 \end{lemma}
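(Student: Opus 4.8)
The plan is to mimic the structure of the zero-temperature argument (Lemma~\ref{lem-zero-bound-hard-crossing}), replacing the ground-state flip argument by a perturbation of the random field combined with a change-of-measure estimate, and replacing counting of disagreement vertices by the expected magnetization gap $\sum_v (\langle \sigma_v^{\Lambda_N,+}\rangle - \langle \sigma_v^{\Lambda_N,-}\rangle)$. First I would set up the perturbed field: fix a perturbation $\Delta$ of order $1/N$ on $\Lambda_N$ (or, as in the zero-temperature case, on a suitable sub-box), and introduce the tilde Ising measures $\tilde\mu^{\Lambda_N,\pm}$ with respect to $\tilde h = h + \Delta$. The analogue of Lemma~\ref{lem-zero-perturbation-contiguous} — a contiguity statement between $\P_N$ and $\tilde\P_N$ coming from the Gaussian change-of-measure formula \eqref{eq-zero-change-of-measure} — still holds verbatim, since it concerns only the law of the field, not the Ising measure; so it suffices to prove the two-term $\min$ bound with one of the events replaced by its tilde version and with constant probability, and then transfer back.

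Next I would argue by contradiction: suppose both $\mathbb P\otimes\mu^{\Lambda_N,+}(\mathcal E^+)$ and $\P(\sum_{v\in\Lambda_{N/8}}(\langle\sigma_v^{\Lambda_N,+}\rangle - \langle\sigma_v^{\Lambda_N,-}\rangle) > 10^{-3}N)$ are close to $1$. The magnetization gap being of order $N$ on a box of side $N/8$ is the positive-temperature surrogate of ``many disagreements in the annulus'': it forces, via a first-moment/Markov argument together with \eqref{eq-zero-monotonicity}-type domination by independent enhanced copies, that with positive probability the enhanced magnetization gap inside a much smaller inner box $\Lambda_{N/32}$ dominates (up to the constant $r$ coming from covering $\mathcal A_{N/2}$) the gap in the outer annulus. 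On that event I want to run the energy-balance argument of Lemma~\ref{lem-zero-perturbation}: one takes the set $S$ built from the inner disagreements together with the first few ``layers'' of an intrinsic-distance decomposition, and compares the free energies (rather than ground-state energies) of $\mu^{\Lambda_N,-}$ and $\tilde\mu^{\Lambda_N,-}$. The key point is that at positive temperature, flipping spins on $S$ and the $1/N$ field shift produce a strictly favorable free-energy change of order $|S|\Delta - (\text{boundary terms})$, which, if $K = d_{\mathcal C}(\partial\Lambda_{N/4},\partial\Lambda_{N/2})$ were large, is positive — but here $K \ge N/4$ trivially as in Section~\ref{sec-zero-crossing-dimension}, so the contradiction is with the existence of the hard crossing $\mathcal E^+$ (or its tilde version), which is what a surrounding $(+)$-contour would block by planar duality. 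As in the zero-temperature proof I would split into Case 1 ($\theta>0$) and Case 2 ($\theta=0$), the second handling the degenerate possibility that the enhanced disagreement region is typically empty, in which case an easy $(+)$-crossing in $\mathcal C^{\Lambda_N}$ survives the perturbation and by duality kills the hard crossing for $\tilde\mu^{\Lambda_N,-}$.

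The main obstacle I anticipate is making the free-energy comparison rigorous: at $T=0$ one literally flips the ground state and compares Hamiltonians, but at $T>0$ one must compare $-\beta^{-1}\log Z$ of two Gibbs measures under a spin-flip symmetry on $S$ and a field shift, and the ``disagreement'' object $\mathcal C^{\Lambda_N}$ is now defined through an arbitrary monotone coupling $\pi$ rather than intrinsically. So the counting identities (e.g.\ $\tilde g(S,\zero)\cap g(S,\zero) = E(S,B_{k_*})$) need to be replaced by inequalities between conditional expectations of edge-indicator quantities under $\mu^{\Lambda_N,\pm}$, using FKG/monotonicity to control how the boundary of $S$ interacts with the field shift. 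Quantifying that the magnetization gap $>10^{-3}N$ genuinely yields, with constant probability, a favorable sign in the free-energy balance — i.e.\ translating ``large expected gap'' into ``an actual flip lowers free energy'' — is the delicate step; I expect it to require a careful second look at which spins are flipped (only those with $\sigma^+ = +1$, mirroring the $\xi = \zero \Rightarrow \sigma^{+} = 1$ observation) and an averaging argument over the location of the ``thin layer'' $B_{k_*}$. Once this is in place, the final sentence of the lemma — the bound for every monotone coupling $\pi$ — is immediate, since $\mathcal E^+$ (respectively the event involving the expected gap) dominates $\mathrm{Cross}_{\mathrm{hard}}(\Lambda_{N/8}\setminus\Lambda_{N/32},\mathcal C^{\Lambda_N})$ under any such $\pi$ by \eqref{eq-def-mathcal-C} and planar duality, together with a Markov bound converting the expected gap into the probability that $\mathcal C^{\Lambda_N}$ meets enough of $\Lambda_{N/8}$.
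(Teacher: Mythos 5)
Your high-level skeleton matches the paper's: perturb the field by order $1/N$, use Gaussian contiguity (the analogue of Lemma~\ref{lem-zero-perturbation-contiguous} is indeed Lemma~\ref{lem-contiguous}), compare ``enhanced'' disagreements in an inner box against disagreements in an outer annulus via independent copies and a covering by $r$ translates of $\Lambda_{N/32}$, derive a contradiction, and finish with Markov's inequality to get the statement for every monotone coupling. That last reduction and the Step-2-type domination are essentially what the paper does. But the central mechanism you leave as ``the main obstacle I anticipate'' is precisely the content of the proof, and the route you sketch for it --- flipping spins on a layered set $S$ built from the intrinsic-distance decomposition and replacing the counting identities of Lemma~\ref{lem-zero-perturbation} by ``inequalities between conditional expectations of edge-indicator quantities'' --- is not what the paper does and is unlikely to close. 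At positive temperature there is no configuration to flip, and the disagreement set depends on the coupling; the paper never flips anything. Instead it proves Lemma~\ref{lem-free-energy-differences-2}: the change in the difference of log-partition-functions $F^{S,\tau^+,t}_{\Omega^+}-F^{S,\tau^-,t}_{\Omega^-}$ as the perturbation parameter $t$ runs from $0$ to $1$ is bounded below by $\Delta\int_0^1(m^{S,\tau^+,t}_{\Omega^+}-m^{S,\tau^-,t}_{\Omega^-})\,dt$ (because $dF/dt$ is a magnetization, and FKG handles the restriction to $\Omega^\pm$), and above by $8\sum_{v\in\Gamma}(\tau^+_v-\tau^-_v)$ plus the error $-\beta^{-1}\log\mu(\Omega^\pm)$. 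The contour events $\mathcal E^{\pm}$ do not enter through planar duality ``blocking'' anything, as you suggest; they are the increasing/decreasing sets $\Omega^\pm$ on which one conditions so that the restricted inner magnetization stochastically dominates the enhanced one, and hypothesis \eqref{eq-crossing-annulus-1} is exactly what keeps the $-\beta^{-1}\log\mu(\Omega^\pm)$ term under control. Your sentence about ``$K\ge N/4$ trivially'' and the contradiction being ``with the existence of the hard crossing'' does not describe a workable argument.

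Two further ingredients you are missing. First, the boundary term $8\sum_{v\in\Gamma}(\tau^+_v-\tau^-_v)$ lives on a single contour $\Gamma=\partial\Lambda_k$; to convert it into the volume quantity $\sum_{v\in\mathcal A_{N/2}}(\langle\sigma^+_v\rangle-\langle\sigma^-_v\rangle)$ the paper averages over the boundary spins and sums the resulting inequality over $k$ from $N/4$ to $N/2$, which also produces the extra factor $N$ that makes $\Delta\asymp 1/N$ the right scale. Second, the perturbation is not uniform on $\Lambda_N$: it is $\Delta'=t^*\Delta$ on $\Lambda_N\setminus\Lambda_{N/8}$ for all $t$ and $t\Delta$ on $\Lambda_{N/8}$, with $t^*$ chosen as the minimizer in \eqref{eq-Delta'}; this matching of the outer field to the worst $t$ is what lets Step 2's bound on the annulus disagreements be expressed in terms of the same quantile $\theta^*$ that Step 1 controls. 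Without these two devices the ``almost contradictory'' inequalities do not meet, so as written the proposal has a genuine gap at its core step.
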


\subsubsection{A perturbative analysis}
Before proving Lemma~\ref{lem-crossing-key}, we need some preparational work on a certain perturbative analysis. This is analogous to Lemma~\ref{lem-zero-perturbation}, which has been applied twice in the case of $T=0$: in the proof of Lemma~\ref{lem-zero-bound-hard-crossing} and the proof of Lemma~\ref{lem-zero-m-star}. For $T>0$, it is more complicated and thus we provide two separate versions of perturbative analysis, both of which are proved via keeping track of the free energy. The first version is presented in  Lemma~\ref{lem-free-energy-differences-2} in the present section (for the application in Lemma~\ref{lem-crossing-key}), and the second version is presented in Section~\ref{sec-another-perturbation} (for the application in Lemma~\ref{lem-bound-m}).

For any set $\Lambda \subset \mathbb Z^2$ and a configuration $\tau \in \{-1, 1\}^{\partial \Lambda}$, analogous to \eqref{eq-def-H} we can define the Hamiltonian on $\Lambda$ with boundary condition $\tau$ and external field $\{h_v\}$ by:
\begin{equation}\label{eq-def-H-general}
H^{\Lambda, \tau}(\sigma) = - \big(\sum_{u\sim v, u, v\in \Lambda} \sigma_u \sigma_v  + \sum_{u\sim v, u\in  \Lambda, v\in \partial \Lambda} \sigma_u \tau_v + \sum_{u\in \Lambda} \sigma_u h_u\big) \mbox{ for } \sigma \in \{-1, 1\}^{\Lambda}\,.
\end{equation}
We can then analogously define the Ising measure $\mu^{\Lambda, \tau}$ by assigning probability to $\sigma\in \{-1, 1\}^\Lambda$ proportional to $e^{-\beta H^{\Lambda, \tau}(\sigma)}$. In addition, we define the corresponding log-partition-function (it is the negative of the free energy; in our analysis, it seems cleaner to work with the log-partition-function so not to be confused by the negative sign)
\begin{equation}\label{eq-free}
    F^{\Lambda, \tau}= \frac{1}{\beta}\log \big(\sum_{\sigma\in \{-1,1\}^\Lambda} e^{-\beta H^{\Lambda, \tau}(\sigma)}\big).
\end{equation}

 For simplicity, we will only consider $N = 2^n$ for $n\geq 10$.
 For $\Delta > 0$, $\Delta'\geq 0$ and $0\leq t\leq 1$,  we will consider the following perturbed field in this section (which is increasing in $t$):
\begin{equation}\label{eq-def-tilde-h}
h^{(t)}_v = h^{(t, N)}_v  = \begin{cases}
h_v + \Delta', &\mbox{ for } v\in \Lambda_{N}\setminus \Lambda_{N/8}\,,\\
h_v + t\Delta, &\mbox{ for } v\in \Lambda_{N/8}\,.
\end{cases}
\end{equation}
(We draw the reader's attention to that $t$ appeared in the definition of $h^{(t)}_v$ only for $v\in \Lambda_{N/8}$, and that $h^{(0)} \neq h$ if $\Delta'>0$. The perturbation in \eqref{eq-def-tilde-h} is more subtle than that in \eqref{eq-zero-def-tilde-h}, for the reason that we wish to take advantage of \eqref{eq-Delta'} below later with a judicious choice of $\Delta'$.)
Let $\mu^{\Lambda_N,  \pm, t}$ be Ising measures with plus/minus boundary conditions and external field $\{h^{(t)}_v: v\in \Lambda_N\}$. In addition, let $H^{\Lambda_N, \pm, t}$ be the corresponding Hamiltonians, let $F^{\Lambda_N, \pm, t}$ be the corresponding log-partition-functions, and let $\sigma^{\Lambda_N,  \pm, t}$ be spin configurations sampled according to $\mu^{\Lambda_N,  \pm, t}$.

For notation convenience, for any set $\Gamma\subset \mathbb Z^2$, let $S_\Gamma$ be the collection of vertices which are not in $\Gamma$ and are separated by $\Gamma$ from $\infty$ on $\mathbb Z^2$ (i.e., the collection of vertices that are enclosed by $\Gamma$).

Let $S \subset \Lambda_N$ be a subset which contains $\Lambda_{N/8}$ and let $\Gamma = \partial S$ (thus we have $S \subset S_\Gamma$).
For any $\tau\in \{-1, 1\}^\Gamma$, we denote by $\mu^{S, \tau, t}$ the Ising measure on $S$ with boundary condition $\tau$ and external field $\{h_v^{(t)}: v\in S\}$. In addition,  let $H^{S, \tau, t}$ be the Hamiltonian for the corresponding Ising spin, and let $F^{S, \tau, t}$  be the corresponding log-partition-function. Also, we let $\sigma^{S, \tau, t}$  be the spin configuration sampled according to  $\mu^{S, \tau, t}$. For later applications, it would be useful to consider the log-partition-function restricted to a subset of configurations. To this end, we define
\begin{equation}\label{eq-free-Omega}
    F^{S, \tau, t}_\Omega=  \frac{1}{\beta}\log \big(\sum_{\sigma\in \Omega} e^{-\beta H^{S, \tau, t}(\sigma)}\big) \mbox{ for } \Omega \subset \{-1,1\}^S.
\end{equation}
In addition, for any measure $\mu^{S, \tau, t}$, we define $\mu^{S, \tau, t}_{\Omega}$ to be a measure such that
$$\mu^{S, \tau, t}_\Omega(\sigma) = (\mu^{S, \tau, t}(\Omega) )^{-1}\mu^{S, \tau, t}(\sigma) \mbox{ for }\sigma\in \Omega\,.$$ (We draw readers' attention to that $\mu^{S, \tau, t}(\Omega)$ is the total measure of $\Omega$ under $\mu^{S, \tau, t}$ and thus is a number, and that $\mu^{S, \tau, t}_\Omega$ is the measure $\mu^{S, \tau, t}$ conditioned on the occurrence of $\Omega$.) For convenience, we let $\sigma^{S, \tau, t}_{\Omega}$ be the spin configuration sampled according to $\mu^{S, \tau, t}_{\Omega}$.
Further, define (note that below we sum over $v\in \Lambda_{N/32}$ as opposed to $v\in S$)
 \begin{equation}\label{eq-def-m-S-zeta-t}
 m^{S, \tau, t}_{\Omega} = \sum_{v\in \Lambda_{N/32}} \langle \sigma^{S, \tau, t}_{\Omega, v} \rangle_{\mu^{S, \tau, t}_{\Omega}}\,.
\end{equation}
For notation convenience, we write $ m^{S, \tau, t} =  m^{S, \tau, t}_{\Omega}$ if $\Omega = \{-1, 1\}^S$.
We say $\Omega\subset \{-1, 1\}^S$ is an increasing set if $\sigma\in \Omega$ implies that $\sigma'\in \Omega$ provided $\sigma' \geq \sigma$, and we say $\Omega$ is a decreasing set if $\Omega^c$ is an increasing set. In what follows, we consider $\tau^+, \tau^- \in \{-1, 1\}^\Gamma$ such that $\tau^+ \geq \tau^-$.
 \begin{lemma}\label{lem-free-energy-differences-2}
Quench on the external field $\{h_v\}$. We have that for any increasing set $\Omega^+\subset \{-1, 1\}^S$ and any decreasing set $\Omega^-\subset \{-1, 1\}^S$
$$
\Delta\int_0^1 (m^{S,  \tau^+, t}_{\Omega^+} - m^{S, \tau^-, t}_{\Omega^-}) dt
\leq 8\sum_{v\in \Gamma}(\tau^+_v - \tau^-_v) - \frac{1}{\beta}\big( \log \mu^{S, \tau^+, 0}(\Omega^+)+ \log \mu^{S, \tau^-, 1}(\Omega^-) \big) \,.$$
 \end{lemma}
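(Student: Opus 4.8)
The statement is a free-energy (log-partition-function) comparison, and the natural strategy is to write the left-hand side as an integral of a derivative in $t$ and control boundary effects by a telescoping/path argument that interpolates between the two boundary conditions $\tau^-$ and $\tau^+$. Concretely, I would first record the elementary derivative identity for the restricted log-partition-function: for any $\Omega$,
\[
\frac{d}{dt} F^{S,\tau,t}_\Omega = \Delta \sum_{v\in\Lambda_{N/8}} \langle \sigma^{S,\tau,t}_{\Omega,v}\rangle_{\mu^{S,\tau,t}_\Omega}\,,
\]
since $t$ enters $h^{(t)}_v$ with coefficient $\Delta$ only on $\Lambda_{N/8}$. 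The appearance of $\Lambda_{N/32}$ rather than $\Lambda_{N/8}$ in \eqref{eq-def-m-S-zeta-t} is the first thing to reconcile: I would split $\Lambda_{N/8}$ into $\Lambda_{N/32}$ and the annulus $\Lambda_{N/8}\setminus\Lambda_{N/32}$, use monotonicity (FKG / Griffiths, since $\Omega^+$ is increasing, $\Omega^-$ decreasing, $\tau^+\ge\tau^-$, and $h^{(t)}$ is larger for larger $t$ and for the $+$ side) to argue that the magnetization on the extra annulus contributes with the favorable sign, so that it can be dropped from $m^{S,\tau^+,t}_{\Omega^+}$ and bounded below by $-|\partial\text{-type terms}|$ for $m^{S,\tau^-,t}_{\Omega^-}$ — or, more cleanly, I would absorb it into the $\Delta'$-independent bookkeeping. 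Actually the cleaner route: the annular part of $\Lambda_{N/8}$ is handled together with the boundary interaction, and its net effect is what produces the $8\sum_{v\in\Gamma}(\tau^+_v-\tau^-_v)$ term.

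The core of the argument is then the comparison
\[
\Delta\int_0^1 \big(m^{S,\tau^+,t}_{\Omega^+}-m^{S,\tau^-,t}_{\Omega^-}\big)\,dt
\;\le\; \big(F^{S,\tau^+,1}-F^{S,\tau^+,0}\big)\text{-type quantities}
\;+\;\big(\text{b.c. change }\tau^-\to\tau^+\big)\;+\;\big(\text{restriction penalties}\big).
\]
For the restriction penalties: $F^{S,\tau,t}_\Omega \le F^{S,\tau,t}$ always, and $F^{S,\tau,t} - F^{S,\tau,t}_\Omega = -\tfrac1\beta\log\mu^{S,\tau,t}(\Omega)$, which is where the two $\log\mu$ terms on the right-hand side come from — evaluated at the endpoints $t=0$ (for $\Omega^+$) and $t=1$ (for $\Omega^-$) because that is where we need the inequality to go the right way after integrating $\tfrac{d}{dt}F^{S,\tau,t}_\Omega \le \tfrac{d}{dt}F^{S,\tau,t}$ in one direction for the increasing set and the other for the decreasing set. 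For the boundary-condition change $\tau^-\to\tau^+$: changing one boundary spin $\tau_v$ from $-1$ to $+1$ changes $H^{S,\tau,t}$ by at most (number of edges from $v$ into $S$) $\times\,2\le 2\cdot 4=8$ per unit change, uniformly in $\sigma$, hence changes $F^{S,\tau,t}$ by at most $8$ per such spin; summing over $v\in\Gamma$ with weight $(\tau^+_v-\tau^-_v)/2$ gives the $8\sum_{v\in\Gamma}(\tau^+_v-\tau^-_v)$ bound. (The constant $8$ rather than $4$ leaves room for the $\Lambda_{N/8}\setminus\Lambda_{N/32}$ annular contribution mentioned above.) Finally, the $\Delta'$-perturbation on $\Lambda_N\setminus\Lambda_{N/8}$ is $t$-independent, so it drops out of $\int_0^1 \tfrac{d}{dt}(\cdot)\,dt$ entirely — this is exactly why $h^{(0)}\ne h$ is harmless here, and the role of $\Delta'$ is only to set up favorable monotonicity (it will be exploited in the companion estimate \eqref{eq-Delta'} referenced in the text, not in this lemma's statement).

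**Assembling it.** I would write $m^{S,\tau^+,t}_{\Omega^+}-m^{S,\tau^-,t}_{\Omega^-}$, multiply by $\Delta$, recognize it (up to the annular correction) as $\tfrac{d}{dt}\big(F^{S,\tau^+,t}_{\Omega^+}-F^{S,\tau^-,t}_{\Omega^-}\big)$, integrate over $t\in[0,1]$, and bound
\[
F^{S,\tau^+,1}_{\Omega^+}-F^{S,\tau^+,0}_{\Omega^+}\le F^{S,\tau^+,1}-F^{S,\tau^+,0},\qquad
-\big(F^{S,\tau^-,1}_{\Omega^-}-F^{S,\tau^-,0}_{\Omega^-}\big)\le -\big(F^{S,\tau^-,1}-F^{S,\tau^-,0}\big) + \text{(restriction terms at the right endpoints)},
\]
then use $F^{S,\tau^+,t}-F^{S,\tau^-,t}\le 8\sum_{v\in\Gamma}(\tau^+_v-\tau^-_v)$ at $t=1$ and $\ge 0$ at $t=0$ (monotonicity in b.c.) to collapse the two $F$-differences at matched $t$ into the stated boundary term, and collect $-\tfrac1\beta\log\mu^{S,\tau^+,0}(\Omega^+)$ and $-\tfrac1\beta\log\mu^{S,\tau^-,1}(\Omega^-)$ from the restriction steps.

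**Main obstacle.** The delicate point is getting all the signs to line up simultaneously: the integral in $t$ wants the derivative inequalities to point the same way, but $\Omega^+$ is increasing while $\Omega^-$ is decreasing, so the restriction penalties must be extracted at *opposite* endpoints ($t=0$ versus $t=1$), and this must be compatible with where the boundary-condition monotonicity $F^{S,\tau^+,t}\gtrless F^{S,\tau^-,t}$ is applied. I expect the cleanest bookkeeping is to handle the $\tau^+,\Omega^+$ and $\tau^-,\Omega^-$ pieces separately — for each, compare the restricted to the unrestricted log-partition-function, integrate the (unrestricted) $t$-derivative, and only at the end subtract the two unrestricted free energies at common $t$ using b.c. monotonicity — rather than trying to difference everything at once. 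The secondary nuisance is the $\Lambda_{N/32}$-versus-$\Lambda_{N/8}$ discrepancy, which I would dispatch by a one-line monotonicity remark showing the annular magnetizations have the correct sign to be discarded (increasing $\Omega$, larger b.c. and field $\Rightarrow$ larger spins), folding any slack into the constant $8$.
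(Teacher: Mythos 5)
Your overall route is the paper's: differentiate the restricted log-partition-functions in $t$, use FKG/monotonicity both to reduce the sum from $\Lambda_{N/8}$ to $\Lambda_{N/32}$ (the annular terms in the difference of derivatives are nonnegative and are simply discarded) and to order the four conditional magnetizations via $\langle \sigma^{S,\tau^+,t}_{\Omega^+,v}\rangle \ge \langle \sigma^{S,\tau^+,t}_{v}\rangle \ge \langle \sigma^{S,\tau^-,t}_{v}\rangle \ge \langle \sigma^{S,\tau^-,t}_{\Omega^-,v}\rangle$, extract the two $\log\mu$ penalties by comparing restricted and unrestricted log-partition-functions at the endpoints $t=0$ (for $\Omega^+$) and $t=1$ (for $\Omega^-$), and control the boundary-condition change by a uniform bound on the Hamiltonian. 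Two steps, however, are wrong as written.

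First, the claim that $F^{S,\tau^+,0}-F^{S,\tau^-,0}\ge 0$ ``by monotonicity in b.c.''\ is false: with a signed external field the log-partition-function is not monotone in the boundary condition, since $\partial F/\partial \tau_v = \sum_{u\sim v,\,u\in S}\langle \sigma_u\rangle$ can be negative (a single site with a strongly negative field next to one boundary vertex already gives $Z$ decreasing in that boundary spin). The correct step --- and the one the paper takes --- is to apply at $t=0$ the same crude uniform bound you use at $t=1$, namely $F^{S,\tau^+,0}-F^{S,\tau^-,0}\ge -8\,\#\{v\in\Gamma:\tau^+_v\ne\tau^-_v\} = -4\sum_{v\in\Gamma}(\tau^+_v-\tau^-_v)$. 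This also corrects your accounting of the constant: the factor $8$ in front of $\sum_{v\in\Gamma}(\tau^+_v-\tau^-_v)$ has nothing to do with ``room for the annular contribution'' (the annulus contributes nothing, being dropped by positivity); it is exactly the sum of the two endpoint bounds, $4\sum + 4\sum$. Second, your displayed inequality $F^{S,\tau^+,1}_{\Omega^+}-F^{S,\tau^+,0}_{\Omega^+}\le F^{S,\tau^+,1}-F^{S,\tau^+,0}$ is false: since $\Omega^+$ is increasing and the field increases in $t$, $\mu^{S,\tau^+,t}(\Omega^+)$ is nondecreasing in $t$ and the inequality actually reverses; you must keep the restriction penalty $-\frac{1}{\beta}\log\mu^{S,\tau^+,0}(\Omega^+)$ on the right-hand side, as your surrounding prose (correctly) indicates. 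With these two repairs your argument coincides with the paper's proof.
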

 \begin{proof}
 The proof is done via keeping track of the change on the difference of log-partition-functions with respect to different boundary conditions when we perturb the external field. In {\bf Step 1}, we bound such difference from above by the number of disagreements on boundary conditions; in {\bf Step 2} we bound such difference from below by the expected number of disagreements, with a caveat that we use the notion of ``restricted'' log-partition-functions as in \eqref{eq-free-Omega}; in {\bf Step 3}, we address the caveat by linking the two notions of log-partition-functions.

\noindent {\bf Step 1.} We will prove (below the equality is obvious since $\tau^+ \geq \tau^-$)
 \begin{equation}\label{eq-free-energy-upper-2}
  (F^{S,  \tau^+,1} - F^{S,  \tau^-, 1}) - (F^{S,  \tau^+,0} - F^{S,  \tau^-, 0}) \leq 16\cdot \#\{v\in \Gamma:  \tau^+_v \neq  \tau^-_v\} =  8 \sum_{v\in \Gamma}(\tau^+_v - \tau^-_v)\,.
 \end{equation}
  (Here we use $\#A$ to denote the cardinality of $A$ for a finite set $A$. We switch from  the more compact notation $|A|$ to $\#A$ in this section, as we wish to avoid somewhat awkward notation when $|$ is followed by another $\mid$ which means ``conditioned on''.)
 Since each vertex has 4 neighbors in $\mathbb Z^2$,  a straightforward computation gives that
 \begin{align*}
F^{S,  \tau^+,1} - F^{S,  \tau^-, 1}
  = \frac{1}{\beta}\log \frac{\sum_{\sigma}e^{-\beta  H^{S, \tau^+, 1}(\sigma)}}{\sum_{\sigma}e^{-\beta H^{S,  \tau^-, 1}(\sigma) }}
\leq\frac{1}{\beta}\log e^{8 \beta \cdot \#\{v\in \Gamma: \tau^+_v \neq  \tau^-_v\}} \leq 8\cdot \#\{v\in \Gamma:  \tau^+_v \neq  \tau^-_v\}\,.
 \end{align*}
 Similarly, we have that $F^{S,    \tau^+,0} - F^{S, \tau^-, 0}\geq - 8\cdot\#\{v\in \Gamma: \tau^+_v \neq  \tau^-_v\}$. This proves \eqref{eq-free-energy-upper-2}.

\noindent {\bf Step 2.} We will prove
 \begin{equation}
 \label{eq-free-energy-lower-2}
 (F^{S,  \tau^+, 1}_{\Omega^+} - F^{S,  \tau^-, 1}_{\Omega^-}) - (F^{S, \tau^+, 0}_{\Omega^+} -  F^{S, \tau^-,0}_{\Omega^-})  \geq \Delta\int_0^1 (m^{S,  \tau^+, t}_{\Omega^+} - m^{S, \tau^-, t}_{\Omega^-}) dt\,.
 \end{equation}
  We write
 \begin{equation}\label{eq-int-2}
 (F^{S,  \tau^+, 1}_{\Omega^+} - F^{S,  \tau^-, 1}_{\Omega^-}) - (F^{S, \tau^+, 0}_{\Omega^+} -  F^{S, \tau^-,0}_{\Omega^-})  = (F^{S,  \tau^+, 1}_{\Omega^+} - F^{S, \tau^+, 0}_{\Omega^+} ) - (F^{S,  \tau^-, 1}_{\Omega^-} -  F^{S, \tau^-,0}_{\Omega^-}) \,.
 \end{equation}
Thus, we get that
 \begin{equation}\label{eq-part-2}
F^{S,  \tau^+, 1}_{\Omega^+} - F^{S, \tau^+, 0}_{\Omega^+} =\int_0^1 \frac{d F^{S,   \tau^+, t}_{\Omega^+}}{dt} dt,\quad \quad
F^{S,  \tau^-, 1}_{\Omega^-} -  F^{S, \tau^-,0}_{\Omega^-} =\int_0^1 \frac{d F^{S,  \tau^-, t}_{\Omega^-}}{dt}  dt\,.
 \end{equation}
Since $\frac{d F^{S,   \tau^+, t}_{\Omega^+}}{dt}  = \sum_{v\in \Lambda_{N/8}} \Delta \langle \sigma^{S,  \tau^+, t}_{\Omega^+, v} \rangle_{\mu^{S,  \tau^+, t}_{\Omega^+}}$ and $\frac{d F^{S,    \tau^-, t}_{\Omega^-}}{dt}  = \sum_{v\in \Lambda_{N/8}} \Delta \langle \sigma^{S,  \tau^-, t}_{\Omega^-, v} \rangle_{\mu^{S, \tau^-, t}_{\Omega^-}}$, we see
 $$\frac{d F^{S,   \tau^+, t}_{\Omega^+}}{dt}  -  \frac{d F^{S,  \tau^-, t}_{\Omega^-}}{dt}  \geq \sum_{v\in \Lambda_{N/32}} \Delta( \langle \sigma^{S,  \tau^+, t}_{\Omega^+, v} \rangle_{\mu^{S, \tau^+, t}_{\Omega^+}} -  \langle \sigma^{S,  \tau^-, t}_{\Omega^-, v} \rangle_{\mu^{S, \tau^-, t}_{\Omega^-}}) = \Delta m^{S, \tau^+, t}_{\Omega^+} - \Delta m^{S, \tau^-, t}_{\Omega^-}\,,$$
 where the inequality follows from the fact that
 $$\langle \sigma^{S, \tau^+, t}_{\Omega^+, v} \rangle_{\mu^{S, \tau^+, t}_{\Omega^+}} \geq  \langle \sigma^{S, \tau^+, t}_v \rangle_{\mu^{S, \tau^+, t}} \geq \langle \sigma^{S,  \tau^-, t}_v \rangle_{\mu^{S, \tau^-, t}} \geq \langle \sigma^{S,  \tau^-, t}_{\Omega^-, v} \rangle_{\mu^{S, \tau^-, t}_{\Omega^-}}\mbox{ for all }v\in S\,.$$
In the preceding display, the first and the third inequalities follow from FKG inequality \cite{FKG} and the second inequality follows from monotonicity.
 Combined with \eqref{eq-part-2} and \eqref{eq-int-2}, it yields \eqref{eq-free-energy-lower-2}.

\noindent {\bf Step 3.} From definitions as in \eqref{eq-free} and \eqref{eq-free-Omega}, we see that
\begin{equation}\label{eq-free-energy-comparison}
F^{S,  \tau^+, 1}  - F^{S,  \tau^+, 1}_{\Omega^+} = -\frac{1}{\beta} \log \mu^{S, \tau^+, 1}(\Omega^+)\,,
\end{equation}
and similar equalities hold for other combinations of boundary conditions, external fields and $\Omega^\pm$.

Combining \eqref{eq-free-energy-upper-2}, \eqref{eq-free-energy-lower-2} and \eqref{eq-free-energy-comparison}, we complete the proof of the lemma.
 \end{proof}

\subsubsection{A lower bound on the intrinsic distance}
Denote by $\mathcal V^{\sigma, \pm} = \{v\in S: \sigma_v = \pm 1\}$  for $S\subset \Lambda_N$ and $\sigma\in \{-1, 1\}^{S}$. For any $S\supset \Lambda_{N/8}$, define
\begin{equation}\label{eq-def-Omega-pm}
\Omega^\pm = \Omega^\pm(S)= \{\sigma\in \{-1, 1\}^{S}: \mathrm{Cross}_{\mathrm{hard}}(\Lambda_{N/8}\setminus\Lambda_{N/32}, \mathcal{V}^{\sigma, \pm}) \mbox{ occurs }\}\,.
\end{equation}
We see that $\Omega^+$ is an increasing set and $\Omega^-$ is a decreasing set. For $A\subset \Lambda \subset \mathbb Z^2$ and $\sigma\in \{-1, 1\}^\Lambda$, we denote by $\sigma_A$ the restriction of $\sigma$ on $A$.
 Let $r>0$ be a constant chosen later. Recall \eqref{eq-def-tilde-h}. Let $\Delta = \frac{10^{10}r^8}{N (\beta \wedge 1)}$ and $\Delta' = t^*\Delta$ for $0\leq t^*\leq 1$ to be chosen.
 \begin{lemma}\label{lem-contiguous}
 For any $p, r >0$, there exists $c = c(\epsilon,p, r, \beta)>0$ such that for any event $E_N$ with $\P(\{h^{(t)}_v: v\in \Lambda_N\}\in E_N)\geq p$ for some $0\leq t, t^*\leq 1$, we have that
$\P(\{h_v: v\in \Lambda_N\}\in E_N) \geq c$.
 \end{lemma}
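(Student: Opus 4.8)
The plan is to rerun the Gaussian change-of-measure argument behind Lemma~\ref{lem-zero-perturbation-contiguous}, the only new ingredient being a bound on the $\ell_2$-size of the perturbation. Observe first that $\{h^{(t)}_v : v\in\Lambda_N\}$ is a \emph{deterministic} translate of $\{h_v : v\in\Lambda_N\}$: with $s = (s_v)_{v\in\Lambda_N}$, $s_v = t^*\Delta$ for $v\in\Lambda_N\setminus\Lambda_{N/8}$ and $s_v = t\Delta$ for $v\in\Lambda_{N/8}$, we have $h^{(t)}_v = h_v + s_v$ and $0\le s_v\le\Delta$ for every $v$ (the two-part structure of $s$ will play no further role). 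Since $\Delta = \tfrac{10^{10}r^8}{N(\beta\wedge 1)}$ and $\#\Lambda_N\le 9N^2$, the key point is that
$$\sum_{v\in\Lambda_N} s_v^2 \;\le\; \#\Lambda_N\cdot\Delta^2 \;\le\; \frac{9\cdot 10^{20}\,r^{16}}{(\beta\wedge 1)^2} \;=:\; M^2,$$
a constant depending only on $r$ and $\beta$ --- crucially not on $N$, $t$ or $t^*$. So we are merely comparing the law of an i.i.d.\ centered Gaussian field with that of its translate by a vector of bounded $\ell_2$-norm, and such laws are mutually absolutely continuous with uniformly controlled density.

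Concretely, I would first record the Gaussian change-of-measure identity (Cameron--Martin), parallel to \eqref{eq-zero-change-of-measure}: writing $\P^{(t)}$ for the law of $\{h^{(t)}_v\}_v$ and $\P$ for the law of $\{h_v\}_v$,
$$\frac{d\P}{d\P^{(t)}}\bigl(\{h^{(t)}_v\}_v\bigr) \;=\; \exp\!\Bigl(-\tfrac{1}{\epsilon^2}\sum_{v\in\Lambda_N} s_v h_v \;-\; \tfrac{1}{2\epsilon^2}\sum_{v\in\Lambda_N} s_v^2\Bigr).$$
Next, since $\sum_v s_v h_v$ is a centered Gaussian of variance $\epsilon^2\sum_v s_v^2\le\epsilon^2 M^2$, a one-line Gaussian tail estimate lets me pick $\iota = \iota(\epsilon,p,r,\beta)>0$ small enough that $\P\bigl(\sum_v s_v h_v > -\tfrac12\sum_v s_v^2 - \epsilon^2\log\iota\bigr)\le p/2$, that is, $\P\bigl(\tfrac{d\P}{d\P^{(t)}}<\iota\bigr)\le p/2$ --- here one uses that $-\tfrac12\sum_v s_v^2 - \epsilon^2\log\iota \ge -\tfrac12 M^2 - \epsilon^2\log\iota$ can be made as large as desired by shrinking $\iota$. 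Finally, given any event $E_N$ (in field space) with $\P^{(t)}(E_N)\ge p$, the intersection $E_N\cap\{\tfrac{d\P}{d\P^{(t)}}\ge\iota\}$ has $\P^{(t)}$-probability at least $p/2$, whence
$$\P(E_N) \;=\; \E_{\P^{(t)}}\!\Bigl[\1_{E_N}\,\tfrac{d\P}{d\P^{(t)}}\Bigr] \;\ge\; \iota\cdot\P^{(t)}\!\Bigl(E_N\cap\bigl\{\tfrac{d\P}{d\P^{(t)}}\ge\iota\bigr\}\Bigr) \;\ge\; \tfrac{\iota p}{2} \;=:\; c,$$
which is the claim.

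I do not expect a genuine obstacle: the whole content is the elementary remark that a per-site perturbation of order $1/N$ spread over the $O(N^2)$ sites of $\Lambda_N$ carries total $\ell_2$-mass $O(1)$, so the unperturbed and perturbed Gaussian laws remain contiguous. The only points that need a word of care are (i) that $c$ must not depend on $t,t^*$, which is guaranteed by the $N,t,t^*$-free bound $\sum_v s_v^2\le M^2$, and (ii) that the relevant tail probability is monotone in the variance, so that the choice of $\iota$ really depends only on $\epsilon,p,r,\beta$. Past these, the argument is word-for-word that of Lemma~\ref{lem-zero-perturbation-contiguous}, and I would simply say so and skip the routine verification.
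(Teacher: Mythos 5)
Your proof is correct and follows essentially the same route as the paper, which simply adapts the change-of-measure argument of Lemma~\ref{lem-zero-perturbation-contiguous}: a Cameron--Martin density for the shifted Gaussian field, restriction to the event where the density is bounded below, and the observation that the shift has $\ell_2$-norm $O(1)$ uniformly in $N$, $t$, $t^*$. Your handling of the non-constant (two-valued) shift and of the uniformity in $t,t^*$ is exactly the ``minimal notation change'' the paper alludes to.
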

 \begin{proof}
The proof is an adaption of Lemma~\ref{lem-zero-perturbation-contiguous} except for minimal  notation change, and thus we omit further details.
 \end{proof}

 \begin{proof}[Proof of Lemma~\ref{lem-crossing-key}]
 The proof shares similarity with that of Lemma~\ref{lem-zero-bound-hard-crossing}, but the present proof is substantially more involved.
 We first provide a heuristic outline of the proof, and we will not be precise on notations or unimportant constants in this informal description. The statement will follow immediately if the probability for existence of a plus contour with respect to plus boundary condition is strictly less than 1, and thus we suppose otherwise (formally, we suppose \eqref{eq-crossing-annulus-1} below). We wish to compare the number of disagreements in $\Lambda_{N/32}$ with that in $\mathcal A_{N/2}$. To this end, it will be useful to consider the ``enhanced'' disagreements in $\Lambda_{N/32}$ (that is, when we pose plus and minus boundary conditions on $\partial \Lambda_{N/8}$ instead of $\partial \Lambda_N$; the word ``enhanced'' is chosen because by monotonicity the enhanced disagreements stochastically dominate the original disagreements). We now compare the enhanced disagreements in $\Lambda_{N/32}$ and disagreements in $\mathcal A_{N/2}$ in both directions.
 \begin{itemize}
 \item The ``$\leq$'' direction ({\bf Step 1} below): This is where plus (minus) contours come into play. Conditioned on existence of plus and minus contours, the disagreements in $\Lambda_{N/32}$ stochastically dominate the enhanced disagreements. In addition, by Lemma~\ref{lem-free-energy-differences-2}, the number of disagreements in $\Lambda_{N/32}$ is upper bounded by that in $\mathcal A_{N/2}$ (up to an additive term that is related to the probability of existence of plus/minus contours, which we will address later). Altogether, we get that the number of enhanced disagreements in $\Lambda_{N/32}$ is upper bounded by the number of disagreements in $\mathcal A_{N/2}$ (see \eqref{eq-Step-1}).
 \item The ``$\geq$'' direction ({\bf Step 2} below): The set of disagreements in $\mathcal A_{N/2}$ is dominated by a union of constant copies of enhanced disagreements in $\Lambda_{N/32}$, where all these copies are independent with the enhanced disagreements in $\Lambda_{N/32}$. This implies that with positive probability, the number of enhanced disagreements in $\Lambda_{N/32}$ is larger (up to a constant factor) than the number of disagreements in $\mathcal A_{N/2}$ (see \eqref{eq-Step-2-final}).
 \end{itemize}
 Now, if we choose the constants appropriately, we will see that the preceding two scenarios will occur simultaneously with positive probability, which yield bounds in two directions that ``almost'' contradict each other. In order for no contradiction, the only possibility now is that the logarithmic term we ignored earlier (which becomes $\frac{N}{2\beta}$ in \eqref{eq-Step-1}) plays a significant role. But this can happen only when the typical number of enhanced disagreements is at most of order $N$, in which case an application of Markov's inequality (see \eqref{eq-Markov-inequality}) yields the desired lemma.

 We next carry out the proof formally, where we slightly shuffle the order of arguments: we first show that if the typical number of enhanced disagreements is at most of order $N$ (see \eqref{eq-theta*}), then the lemma holds. Next, we prove \eqref{eq-theta*} (which is the main challenge) by contradiction, via the aforementioned two directional comparisons.

  For convenience of notation, write $$\mathcal E^{\pm, t}=  \mathrm{Cross}_{\mathrm{hard}}(\Lambda_{N/8}\setminus\Lambda_{N/32}, \mathcal{V}^{\sigma^{\Lambda_N, \pm, t}, \pm})\,.$$
 We suppose that
 \begin{equation}\label{eq-crossing-annulus-1}
\min_{0\leq t\leq 1}\{\mathbb{P}\otimes\mu^{\Lambda_N, +, t}(\mathcal E^{+, t}), \mathbb{P}\otimes\mu^{\Lambda_N, -, t}(\mathcal E^{-, t})\} \geq 1-r^{-4}10^{-10} \,.
\end{equation}
Otherwise Lemma~\ref{lem-crossing-key} follows from Lemma~\ref{lem-contiguous} (since under any monotone coupling we have $\mathrm{Cross}_{\mathrm{hard}}(\Lambda_{N/8}\setminus\Lambda_{N/32}, \mathcal{C}^{\Lambda_N}) \subset \mathcal E^+_N \cap \mathcal E^-_N$, where $\mathcal E^\pm_N$ is defined in \eqref{eq-def-mathcal-E-pm}). We remark that by monotonicity the preceding inequality is equivalent to $\min\{\mathbb{P}\otimes\mu^{\Lambda_N, +, 0}(\mathcal E^{+, 0}), \mathbb{P}\otimes\mu^{\Lambda_N, -, 1}(\mathcal E^{-, 1})\} \geq 1-r^{-4}10^{-10} $.

Let $\mathcal E^\star = \{\mu^{\Lambda_N, +, 0}(\mathcal E^{+, 0}) \geq 99/100\} \cap \{\mu^{\Lambda_N, -, 1}(\mathcal E^{-, 1}) \geq 99/100\}$ be an event measurable with respect to the Gaussian field. By \eqref{eq-crossing-annulus-1}, we see that
\begin{equation}\label{eq-prob-E-star}
\P(\mathcal E^\star)\geq 1-10^{-2}r^{-4}\,.
\end{equation}
Let $t^*\in [0, 1]$ be such that
 \begin{equation}\label{eq-Delta'}
\inf\{\theta: \P(m^{\Lambda_{N/8}, +, t^*} -m^{\Lambda_{N/8}, -, t^*}\geq \theta ) \leq 1/2r\} = \theta^*\,,
 \end{equation}
 where $\theta^* = \min_{0\leq t \leq 1} \inf \{\theta: \P(m^{\Lambda_{N/8}, +, t} -m^{\Lambda_{N/8}, -, t}\geq \theta ) \leq 1/2r\}\,.$
We claim that
\begin{equation}\label{eq-theta*}
\theta^* \leq 10^{-3} r^{-1} N\,.
\end{equation}
We first show that \eqref{eq-theta*} implies the lemma. For any box $A$, let $A^{\mathrm{Big}}$ be the concentric box of $A$ with side length 4 times that of $A$. Let $r$ be a large enough constant so that we can write $\Lambda_{N/8}=\cup_{i=1}^rA_i$, where $A_i$ is a copy of $\Lambda_{N/32}$ and $A_i$'s are disjoint such that
 $A_i^{\mathrm{Big}} \subset \Lambda_N$ for $1\leq i\leq r$. By monotonicity, we see that for each $1\leq i\leq r$
\begin{align*}
\P& (\sum_{v\in A_i} (\langle  \sigma^{\Lambda_N, +, t^*}_v\rangle_{\mu^{\Lambda_N, +, t^*}} - \langle\sigma^{\Lambda_N, -, t^*}_v\rangle_{\mu^{\Lambda_N, -, t^*}}) > \theta^*) \\
&\leq \P (\sum_{v\in A_i} (\langle  \sigma^{A_i^{\mathrm{Big}}, +, t^*}_v\rangle_{\mu^{A_i^{\mathrm{Big}}, +, t^*}} - \langle\sigma^{A_i^{\mathrm{Big}}, -, t^*}_v\rangle_{\mu^{A_i^{\mathrm{Big}}, -, t^*}})> \theta^*) \leq (2r)^{-1}\,,
\end{align*}
where the last inequality holds due to our choice of $t^*$ as in \eqref{eq-Delta'} and $\Delta' = t^* \Delta$ (thus $h^{(t^*)}_v = h_v + \Delta'$ for $v\in \Lambda_N$). Hence, a simple union bound gives that
\begin{equation}\label{eq-covering-argument}
\P(\sum_{v\in \Lambda_{N/8}} (\langle  \sigma^{\Lambda_N, +, t^*}_v\rangle_{\mu^{\Lambda_N, +, t^*}} - \langle\sigma^{\Lambda_N, -, t^*}_v\rangle_{\mu^{\Lambda_N, -, t^*}}) \leq r \theta^*) \geq \frac{1}{2}\,.
\end{equation}
By Lemma~\ref{lem-contiguous}, we get that
\begin{equation}\label{eq-small-disagreements}
\P (\sum_{v\in \Lambda_{N/8}} (\langle  \sigma^{\Lambda_N, +}_v\rangle_{\mu^{\Lambda_N, +}} - \langle\sigma^{\Lambda_N, -}_v\rangle_{\mu^{\Lambda_N, -}}) > r \theta^*) \leq 1- \delta \mbox{ for } \delta = \delta(\epsilon, \beta, r) >0\,.
\end{equation}
Note that $2\langle\#(\mathcal{C}^{\Lambda_N} \cap \Lambda_{N/8})\rangle_{\pi} = \sum_{v\in \Lambda_{N/8}} (\langle  \sigma^{\Lambda_N, +}_v\rangle_{\mu^{\Lambda_N, +}} - \langle\sigma^{\Lambda_N, -}_v\rangle_{\mu^{\Lambda_N, -}})$ on each instance of the Gaussian field for any monotone coupling $\pi$ of $\mu^{\Lambda_N, \pm}$. Therefore,  on each instance of Gaussian field (which occurs with probability at least $\delta$) such that $\sum_{v\in \Lambda_{N/8}} (\langle  \sigma^{\Lambda_N, +}_v\rangle_{\mu^{\Lambda_N, +}} - \langle\sigma^{\Lambda_N, -}_v\rangle_{\mu^{\Lambda_N, -}}) \leq r \theta^*$, we apply Markov's inequality and get that
\begin{equation}\label{eq-Markov-inequality}
 \pi(\mathrm{Cross}_{\mathrm{hard}}(\Lambda_{N/8}\setminus\Lambda_{N/32}, \mathcal{C}^{\Lambda_N})) \leq \pi(\#(\mathcal{C}^{\Lambda_N} \cap \Lambda_{N/8}) \geq \tfrac{N}{32}) \leq \frac{\theta^* r}{N/32} \leq  \frac{1}{2}\,,
\end{equation}
where the last inequality follows from \eqref{eq-theta*}. This implies that $\P \otimes  \pi(\mathrm{Cross}_{\mathrm{hard}}(\Lambda_{N/8}\setminus\Lambda_{N/32}, \mathcal{C}^{\Lambda_N})) \leq 1-\delta/2$,
 completing the proof of Lemma~\ref{lem-crossing-key} (combined with \eqref{eq-small-disagreements}).

\medskip

It remains to prove \eqref{eq-theta*}. Suppose that \eqref{eq-theta*} does not hold. We will derive a contradiction, using the following two steps.

\begin{center}
\begin{figure}[h]
  \includegraphics[width=15cm]{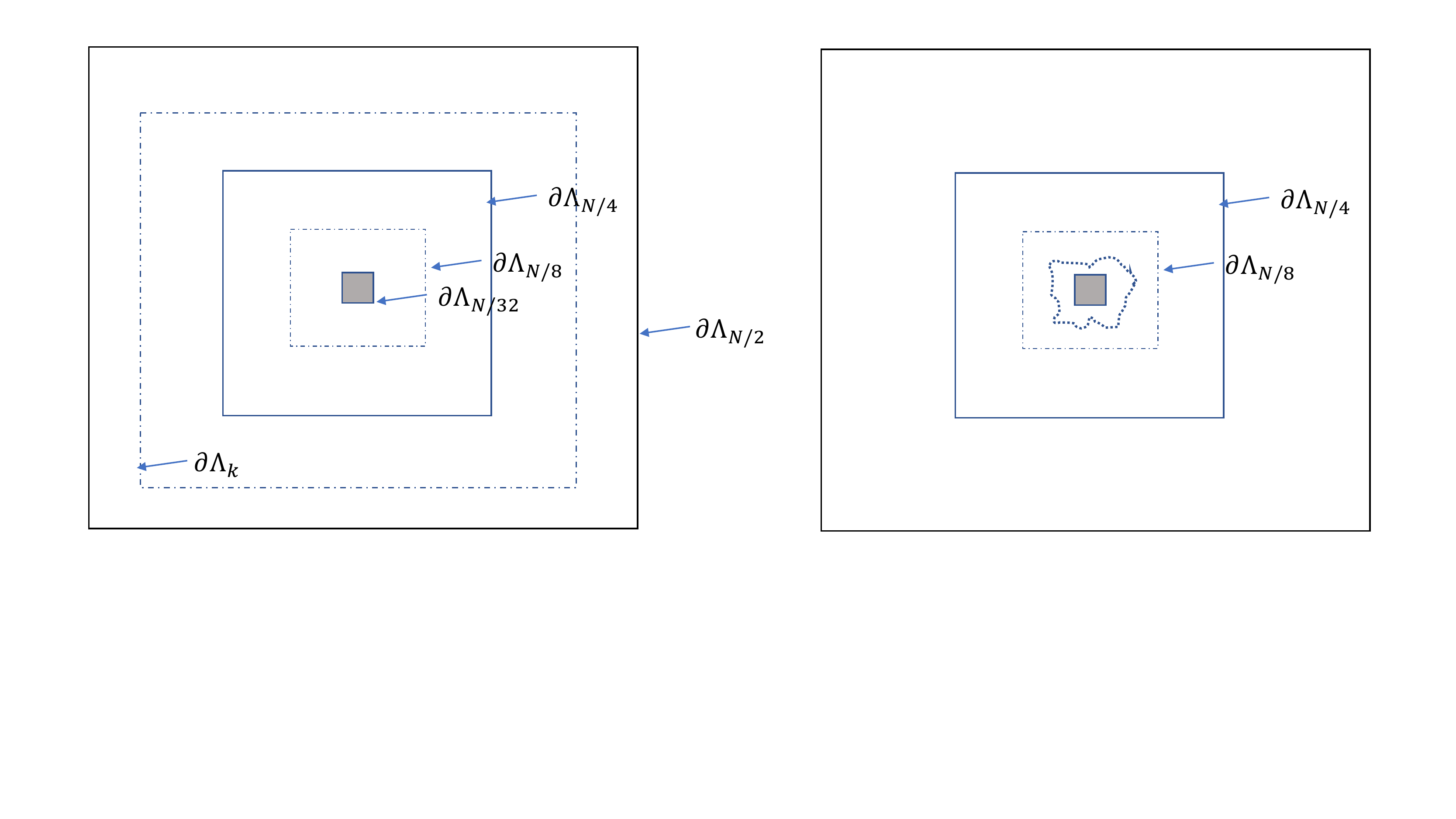}\\ \vspace{-3cm}
 \caption{Illustrations for geometric setup in Step 1 of Lemma~\ref{lem-crossing-key}. The picture on the left illustrates the setup for derivation of \eqref{eq-consequence-Lemma-2.5}, where we bound disagreements in the grey square by disagreements on $\partial \Lambda_k$ (the larger dot-line boundary). The picture on the right illustrates the setup for derivation of \eqref{eq-tau-pm-bound}: by FKG conditioned on plus (respectively minus) contour (drawn in dots in the picture) the magnetization on the grey box is pushed up (respectively down); this allows us to compare the disagreements and enhanced disagreements.
 }\label{figure-3.3}
\end{figure}
\end{center}

\noindent {\bf Step 1.} We refer to Figure~\ref{figure-3.3} for an illustration of geometric setup in this step. Fix $N/4\leq k\leq N/2$. Write $S = \Lambda_k$ and $\Gamma = \partial S$. We first quench on the Gaussian field and also condition on
\begin{equation}\label{eq-condition-on-spin}
(\sigma^{\Lambda_N, +, 1})_{\Gamma} = \tau^+\mbox{ and }(\sigma^{\Lambda_N, -, 0})_{\Gamma} = \tau^- \mbox{ where } \tau^\pm\in \{-1, 1\}^\Gamma \mbox{ and }\tau^+ \geq \tau^-\,.
\end{equation}
Applying Lemma~\ref{lem-free-energy-differences-2}, we get that (recall $\Omega^\pm = \Omega^\pm(S)$ as in \eqref{eq-def-Omega-pm})
\begin{equation}\label{eq-consequence-Lemma-2.5}
\Delta\int_0^1 (m^{S,  \tau^+, t}_{\Omega^+} - m^{S, \tau^-, t}_{\Omega^-}) dt
\leq 8\sum_{v\in \Gamma}(\tau^+_v - \tau^-_v) - \frac{1}{\beta}\big( \log \mu^{S, \tau^+, 0}(\Omega^+)+ \log \mu^{S, \tau^-, 1}(\Omega^-) \big) \,.
\end{equation}
Conditioned on $\sigma^{S, \tau^+, t}\in \Omega^+$, let $\mathfrak C \subset \mathcal V^{\sigma^{S, \tau^+, t}, +} \cap (\Lambda_{N/8}\setminus \Lambda_{N/32})$ be the outmost contour which surrounds $\Lambda_{N/32}$. Note that $\mathfrak C = \Gamma'$ is measurable with respect to $\{\sigma^{S, \tau^+, t}_v: v\in S_{\Gamma'}^c\}$. Thus, by monotonicity of Ising model we see that  $(\sigma^{S, \tau^+, t})_{\Lambda_{N/32}}$ conditioned on $\mathfrak C = \Gamma'$  stochastically dominates $(\sigma^{\Lambda_{N/8}, +,t})_{\Lambda_{N/32}}$. A similar analysis applies to $(\sigma^{S, \tau^-, t})_{\Lambda_{N/32}}$. Combined with \eqref{eq-consequence-Lemma-2.5}, it yields that
\begin{equation}\label{eq-tau-pm-bound}
\Delta\int_0^1 (m^{\Lambda_{N/8}, +, t} - m^{\Lambda_{N/8}, -, t}) dt
\leq 8\sum_{v\in \Gamma}(\tau^+_v - \tau^-_v) - \frac{1}{\beta}\big( \log \mu^{S, \tau^+, 0}(\Omega^+)+ \log \mu^{S, \tau^-, 1}(\Omega^-) \big) \,.
\end{equation}
Define $\mathcal E_{\Gamma, +} = \{\tau^+: \mu^{S, \tau^+, 0}(\Omega^+) \geq 3/4\}$ and $\mathcal E_{\Gamma, -} = \{\tau^-: \mu^{S, \tau^{-}, 1}(\Omega^-) \geq 3/4\}$. Thus,
\begin{align*}
\mu^{\Lambda_N, +, 0}(\mathcal E^{+, 0}) &= \mu^{\Lambda_N, +, 0}(\mathcal E^{+, 0} \mid (\sigma^{\Lambda_N, +, 0})_{\Gamma} \in \mathcal E_{\Gamma, +}) \mu^{\Lambda_N, +, 0}((\sigma^{\Lambda_N, +, 0})_{\Gamma} \in \mathcal E_{\Gamma, +})  \\
& \qquad+  \mu^{\Lambda_N, +, 0}(\mathcal E^{+, 0} \mid (\sigma^{\Lambda_N, +, 0})_{\Gamma} \not\in \mathcal E_{\Gamma, +}) \mu^{\Lambda_N, +, 0}((\sigma^{\Lambda_N, +, 0})_{\Gamma} \not\in \mathcal E_{\Gamma, +}) \\
&\leq \mu^{\Lambda_N, +, 0}((\sigma^{\Lambda_N, +, 0})_{\Gamma} \in \mathcal E_{\Gamma, +}) + \tfrac{3}{4} \mu^{\Lambda_N, +, 0}((\sigma^{\Lambda_N, +, 0})_{\Gamma} \not\in \mathcal E_{\Gamma, +})\,.
\end{align*}
Since $\mu^{\Lambda_N, +, 0}(\mathcal E^{+, 0}) \geq 99/100$ on $\mathcal E^\star$,
it gives that $\mu^{\Lambda_N, +, 0}((\sigma^{\Lambda_N, +, 0})_\Gamma \in \mathcal E_{\Gamma, +}) \geq 3/4$ and thus by monotonicity $\mu^{\Lambda_N, +, 1}((\sigma^{\Lambda_N, +, 1})_\Gamma \in \mathcal E_{\Gamma, +}) \geq 3/4$ (note $\mathcal E_{\Gamma, +}$ is an increasing set). Similarly, we get $\mu^{\Lambda_N, -, 0}((\sigma^{\Lambda_N, -, 0})_{\Gamma} \in \mathcal E_{\Gamma, -}) \geq 3/4$  on $\mathcal E^\star$. Consider an arbitrary monotone coupling $\pi_\Gamma$ of $\mu^{\Lambda_N, +, 1}$ and $\mu^{\Lambda_N, -, 0}$ restricted to $\Gamma$. Then we see that on $\mathcal E^\star$
$$\pi_\Gamma(\mathcal E_{\Gamma, +, - }) \geq \tfrac{3}{4} + \tfrac{3}{4} -  1=  \tfrac{1}{2}  \mbox{ where } \mathcal E_{\Gamma, +, - } = \{(\sigma^{\Lambda_N, +, 1})_\Gamma \in \mathcal E_{\Gamma, +}, (\sigma^{\Lambda_N, -, 0})_{\Gamma} \in \mathcal E_{\Gamma, -}\}\,.$$
Averaging \eqref{eq-tau-pm-bound} over the conditioning of \eqref{eq-condition-on-spin} but restricted to the event $\mathcal E_{\Gamma, +, -}$, we get that on $\mathcal E^\star$
$$\frac{\Delta}{2}\int_0^1 (m^{\Lambda_{N/8}, +, t} - m^{\Lambda_{N/8}, -, t}) dt
\leq 8\sum_{v\in \Gamma}\langle (\sigma^{\Lambda_N, +, 1}_v - \sigma^{\Lambda_N, -, 0}_v ) \one_{\mathcal E_{\Gamma, +, -}}\rangle_{\pi_\Gamma} + 2/\beta\,.$$
Since $\pi_\Gamma$ is a monotone coupling, we thus obtain that on $\mathcal E^\star$
\begin{align*}
\frac{\Delta}{2}\int_0^1 (m^{\Lambda_{N/8}, +, t} - m^{\Lambda_{N/8}, -, t}) dt
&\leq 8\sum_{v\in \Gamma}\langle \sigma^{\Lambda_N, +, 1}_v - \sigma^{\Lambda_N, -, 0}_v  \rangle_{\pi_\Gamma} + 2/\beta \\
&= 8\sum_{v\in \Gamma}(\langle \sigma^{\Lambda_N, +, 1}_v\rangle_{\mu^{\Lambda_N, +, 1}} - \langle\sigma^{\Lambda_N, -, 0}_v \rangle_{\mu^{\Lambda_N, -, 0}}) + 2/\beta \,.
\end{align*}
Summing over $N/4\leq k\leq N/2$, we deduce that on $\mathcal E^\star$
\begin{align}\label{eq-Step-1}
&8\sum_{v\in \mathcal A_{N/2}} (\langle  \sigma^{\Lambda_N, +, 1}_v\rangle_{\mu^{\Lambda_N, +, 1}} - \langle\sigma^{\Lambda_N, -, 0}_v\rangle_{\mu^{\Lambda_N, -, 0}})  +\frac{N}{2\beta}
&\geq \frac{N \Delta}{8}\int_0^1 (m^{\Lambda_{N/8}, +, t} - m^{\Lambda_{N/8}, -, t}) dt\,.
\end{align}

\noindent {\bf Step 2.}   For $N\geq 2$, recall that $\mathcal A_N = \Lambda_N \setminus \Lambda_{N/2}$ is an annulus. Adjust the value of $r$ if necessary so that we can write $\mathcal{A}_{N/2}=\cup_{i=1}^rA_i$, where $A_i$ is a copy of $\Lambda_{N/32}$ and $A_i$'s are disjoint such that
\begin{equation}\label{eq-A-i-large}
A_i^{\mathrm{Big}} \subset \Lambda_N \setminus \Lambda_{N/8} \mbox{ for all } 1\leq i\leq r\,.
\end{equation}
(The geometric setup here is similar to that in the proof of Lemma~\ref{lem-zero-bound-hard-crossing}; see the left picture of Figure~\ref{figure-2.7} for an illustration.) By monotonicity, we see that for each $1\leq i\leq r$
\begin{align*}
\P&(\sum_{v\in A_i} (\langle  \sigma^{\Lambda_N, +, 1}_v\rangle_{\mu^{\Lambda_N, +, 1}} - \langle\sigma^{\Lambda_N, -, 0}_v\rangle_{\mu^{\Lambda_N, -, 0}}) > \theta^*) \\
&\leq \P (\sum_{v\in A_i} (\langle  \sigma^{A_i^{\mathrm{Big}}, +, 1}_v\rangle_{\mu^{A_i^{\mathrm{Big}}, +, 1}} - \langle\sigma^{A_i^{\mathrm{Big}}, -, 0}_v\rangle_{\mu^{A_i^{\mathrm{Big}}, -, 0}}) > \theta^*)\\
& = \P (m^{\Lambda_{N/8}, +, t^*} -m^{\Lambda_{N/8}, -, t^*} > \theta^*) \leq 1/2r\,,
\end{align*}
where the equality holds due to \eqref{eq-A-i-large} and $\Delta' = t^* \Delta$ (note that $h^{(t)}_v = h_v + \Delta'$ for $v\in \Lambda_N\setminus \Lambda_{N/8}$ and for \emph{all} $0\leq t\leq 1$), and in addition the last inequality holds due to \eqref{eq-Delta'}. Thus, a simple union bound gives that the event $\{\sum_{v\in \mathcal A_{N/2}} (\langle  \sigma^{\Lambda_N, +, 1}_v\rangle_{\mu^{\Lambda_N, +, 1}} - \langle\sigma^{\Lambda_N, -, 0}_v\rangle_{\mu^{\Lambda_N, -, 0}}) \leq r \theta^*\}$  contains an event $\mathcal E_{\mathcal A_{N/2}}$ which is measurable with respect to $\{h_v: v\not\in \Lambda_{N/8}\}$ such that
\begin{equation}\label{eq-Step-2}
\P (\mathcal E_{\mathcal A_{N/2}})  \geq 1/2 \,.
\end{equation}

Furthermore, let $\mathcal T = \{1\leq t \leq 1: m^{\Lambda_{N/8}, +, t} - m^{\Lambda_{N/8}, -, t} \geq \theta^*\}$. By \eqref{eq-Delta'} we have $\E |\mathcal T| \geq 1/2r$ where $|\mathcal T|$ is the Lebesgue measure of $\mathcal T$. Since $|\mathcal T| \leq 1$, we have $\P(|\mathcal T| \geq 1/4r) \geq 1/4r$. Therefore,
\begin{equation}\label{eq-lower-bound-T}
\P(\int_0^1 (m^{\Lambda_{N/8}, +, t} - m^{\Lambda_{N/8}, -, t}) dt \geq \theta^* /4r) \geq 1/4r\,.
\end{equation}
Combined with \eqref{eq-Step-2}, this yields that
\begin{equation}\label{eq-Step-2-final}
\P(\mathcal E^\diamond) \geq 1/8r
\end{equation}
where $\mathcal{E}^\diamond$ is the event such that
$$\int_0^1 (m^{\Lambda_{N/8}, +, t} - m^{\Lambda_{N/8}, -, t}) dt \geq \frac{\theta^*}{4r} \geq  (4r^2)^{-1}\sum_{v\in \mathcal A_{N/2}} (\langle  \sigma^{\Lambda_N, +, 1}_v\rangle_{\mu^{\Lambda_N, +, 1}} - \langle\sigma^{\Lambda_N, -, 0}_v\rangle_{\mu^{\Lambda_N, -, 0}})\,.$$

Suppose \eqref{eq-theta*} does not hold. Then by \eqref{eq-Step-1} and the preceding display, the events $\mathcal E^\star$ and $\mathcal E^\diamond$ are mutually exclusive. But by \eqref{eq-prob-E-star} and \eqref{eq-Step-2-final}, we have $\P(\mathcal E^\star) + \P(\mathcal E^\diamond) >1$, arriving at a contradiction.
\end{proof}

\begin{proof}[Proof of Proposition~\ref{prop-crossing-dimension}]
The proof of Proposition~\ref{prop-crossing-dimension} at this point is highly similar to that of Proposition~\ref{prop-zero-crossing-dimension}. As a result, we only provide a sketch emphasizing the additional subtleties.

Let $\pi$ be an arbitrary monotone coupling of $\mu^{\Lambda_N, \pm}$ and let $\mathcal C^{\Lambda_N} = \mathcal C^{\Lambda_N, \pi}$ be defined as in \eqref{eq-def-mathcal-C}.

For any rectangle $A\subset \mathbb R^2$ (whose sides are not necessarily parallel to the axes), recall that $\ell_A$ is the length of the longer side and  $ A^{\mathrm{Large}}$ is the square box concentric with $A$ and of side length $32\ell_A$. In addition,  the aspect ratio of $A$ is the ratio between the lengths of the longer and shorter sides. Consider an arbitrary rectangle $A$ with aspect ratio at least $a = 100$. For a (random) set $\mathcal C \subset \mathbb Z^2$, we continue to use $\mathrm{Cross}(A, \mathcal C)$ to denote the event that there exists a path $v_0, \ldots, v_k \in A \cap \mathcal C$ connecting the two shorter sides of $A$.  For \emph{any} monotone coupling $\pi^{A^{\mathrm{Large}}}$ of $\mu^{A^{\mathrm{Large}}, \pm}$ (below we denote $\mathcal C^{A^{\mathrm{Large}}} = \{v\in A^{\mathrm{Large}}: \sigma^{A^{\mathrm{Large}}, +} > \sigma^{A^{\mathrm{Large}}, -}\}$ under $\pi^{A^{\mathrm{Large}}}$), we can adapt the proof of \eqref{eq-zero-cross-A-hard} and deduce that (write $N' = \min\{2^n: 2^{n+2} \geq \ell_A\}$, and recall $\mathcal E^+$ as in Lemma~\ref{lem-crossing-key})
\begin{align*}
\P \otimes \mu^{\Lambda_{N'}, +}(\mathcal E^+_{N'}) & \geq  1 - 4 (1 - \P\otimes \pi^{A^{\mathrm{Large}}}(\mathrm{Cross}(A, \mathcal V^{ \sigma^{A^{\mathrm{Large}}, +}, +})))\\
&\geq  1 - 4 (1 - \P\otimes \pi^{A^{\mathrm{Large}}}(\mathrm{Cross}(A, \mathcal C^{A^{\mathrm{Large}}})))\,,
\end{align*}
where the second inequality follows from the fact that $\mathrm{Cross}(A, \mathcal C^{A^{\mathrm{Large}}}) \subset \mathrm{Cross}(A, \mathcal V^{ \sigma^{A^{\mathrm{Large}}, +}, +})$.
In addition, by a similar derivation of \eqref{eq-Markov-inequality},
\begin{align*}
\P \otimes \pi^{A^{\mathrm{Large}}}( \mathrm{Cross}(A, \mathcal C^{A^{\mathrm{Large}}})) &\leq \P \otimes \pi^{A^{\mathrm{Large}}} (\#(\mathcal C^{A^{\mathrm{Large}}} \cap A) \geq \ell_A/2) \\
&\leq  \frac{1}{2}(1+\P (\mbox{$\sum_{v\in \Lambda_{N'/8}}$} (\langle  \sigma^{\Lambda_{N'}, +}_v\rangle_{\mu^{\Lambda_{N'}, +}} - \langle\sigma^{\Lambda_{N'}, -}_v\rangle_{\mu^{\Lambda_{N'}, -}}) > 10^{-3} N'))\,.
\end{align*}
Therefore, by Lemma~\ref{lem-crossing-key},
\begin{equation}\label{eq-crossing-prob}
\P\otimes \pi^{A^{\mathrm{Large}}}(\mathrm{Cross}(A, \mathcal C^{A^{\mathrm{Large}}})) \leq 1- \delta \mbox{ where } \delta=  \delta(\epsilon, \beta) >0\,.
\end{equation}
It is crucial that \eqref{eq-crossing-prob} holds uniformly for all possible monotone couplings $\pi^{A^{\mathrm{Large}}}$. Note that the probability for $\mathrm{Cross}(A, \mathcal C^{\Lambda_N, \pi})$ could potentially depend on the location of $A$, either due to different influences from the boundary at different locations or different coupling mechanisms  chosen at different location. However, thanks to \eqref{eq-crossing-prob}, all these probabilities have a uniform upper bound which is strictly less than 1. In addition, by monotonicity of the Ising model, for a collection of rectangles that are well-separated, the corresponding crossing events can be dominated by independent events which have probabilities strictly less than 1.
Next, we complete the proof of Proposition~\ref{prop-crossing-dimension} by utilizing this intuition.  For any $k\geq 1$ and any rectangles $A_1, \ldots, A_k \subseteq \{v\in \mathbb R^2: |v|_\infty\leq N/2\}$ with aspect ratios at least $a$ such that  (a) $\ell_0\leq \ell_{A_i} \leq N/32$ for all $1\leq i\leq k$ and (b) $A^{\mathrm{Large}}_1, \ldots, A^{\mathrm{Large}}_k$ are disjoint, we see that under any coupling $\pi$ of $\mu^{\Lambda_N, \pm}$, there exist sets $\mathcal C^{A_i^{\mathrm{Large}}}$ such that
 \begin{itemize}
\item $\mathcal C^{A_i^{\mathrm{Large}}}$ is sampled according to \emph{some} monotone coupling of $\mu^{A_i^{\mathrm{Large}}, \pm}$.
\item $\mathcal C^{\Lambda_N, \pi} \cap A_i \subset \mathcal C^{A_i^{\mathrm{Large}}} \cap A_i$ (by monotonicity of Ising model with respect to boundary conditions).
\item $\mu^{A_i^{\mathrm{Large}}, \pm}$'s are mutually independent (as they only depend on $\{h_v: v\in A_i^{\mathrm{Large}}\}$ respectively).
 \end{itemize}
Therefore, by \eqref{eq-crossing-prob},
$$\P\otimes \pi( \cap_{i=1}^k \mathrm{Cross}(A_i, \mathcal C^{A_i^{\mathrm{Large}}})) \leq (1- \delta)^k\,.$$
This proves an analogue of Lemma~\ref{lem-zero-assumption}, which verifies the hypothesis required in order to apply \cite{AB99}. The remaining proof is merely an adaption of Proposition~\ref{prop-zero-crossing-dimension} and thus we omit further details.
\end{proof}

\subsection{Admissible coupling and adaptive admissible coupling}\label{sec-admissible-coupling}

In Sections~\ref{sec-admissible-coupling} and \ref{sec-multi-scale}, we wish to prove an analogue of Lemma~\ref{lem-zero-m-star}. In the case for $T>0$, it seems quite a bit more challenging as the choice of the coupling for various Ising measures plays a role, which seems to be subtle in light of Remark~\ref{remark-sad} below. To address the issue, we present a general class of couplings for various Ising measures (i.e., adaptive admissible couplings) in this section. In Section~\ref{sec-contruction-adaptive-coupling}, we present a particular construction of adaptive admissible coupling, which is suited for the multi-scale analysis (the multi-scale analysis is a more complicated version of the proof for Lemma~\ref{lem-zero-m-star}) presented in Section~\ref{sec-analysis-coupling}.

For $k\geq 1$, we consider \emph{deterministic} boundary conditions and external fields $(\tau^{(i)}, \{h^{(i)}_v: v\in \Lambda\})$ where $\tau^{(i)}\in \{-1, 1\}^{\partial \Lambda}$ for $1\leq i\leq k$ (these will be fixed throughout this section). We define the partial order $\prec$ by
\begin{equation}
i \prec j \mbox{ if } \tau^{(i)} \leq \tau^{(j)} \mbox{ and }h^{(i)} \leq h^{(j)}\,.
\end{equation}
We say that $(\sigma^{(1)}, \ldots, \sigma^{(k)})$  (for $\sigma^{(1)}, \ldots, \sigma^{(k)} \in \{-1, 1\}^\Lambda$) is an admissible configuration if
$
\sigma^{(i)} \leq \sigma^{(j)} \mbox{ for all } i \prec j$. Denote by $\Sigma_k$ the collection of all admissible configurations. For $A\subset \Lambda$, write $(\sigma^{(1)}, \ldots, \sigma^{(k)})_A$ for the restriction of $(\sigma^{(1)}, \ldots, \sigma^{(k)})$ on $A$.
\begin{definition}\label{def-admissible}
For each $1\leq i\leq k$, let  $\mu^{(i)}$ be the Ising measure on $\Lambda$ with boundary condition $\tau^{(i)}$ and external field $h^{(i)}$. We say that a measure $\pi$ is an admissible coupling of $\mu^{(1)}, \ldots, \mu^{(k)}$ if $\pi$ is supported on $\Sigma_k$ and its marginal distributions agree with $\mu^{(i)}$'s.
\end{definition}

\begin{remark}\label{remark-sad}
Ideally, it would be great if there would exist an admissible coupling $\pi$ which satisfies the Markov field property. Or, it would also be great if there would exist an admissible coupling $\pi$ which satisfies a weak version of Markov field property, such that  for any $\Gamma \subset \Lambda$ the measure
$\pi(\sigma^{(i)}_{S_\Gamma} \in \cdot \mid (\sigma^{(1)}, \ldots, \sigma^{(k)})_\Gamma)$ is the Ising measure on $S_\Gamma$ with boundary condition $\sigma^{(i)}_{\partial S_\Gamma}$ and external field $\{h^{(i)}_v: v\in S_\Gamma\}$. However, such coupling does not exist as we can see from the following simple example. Let us consider Ising measures on a line segment with no external field and plus/minus boundary conditions on one end (denoted as $u$). Suppose that there exists an admissible coupling $\pi$  (in this case a monotone coupling) with weak Markov field property. Then  conditioned on the event that the two spins disagree at the other end of the line (denoted as $v$), we claim that the spins from the two Ising measures have to disagree on every vertex on the line,  thereby violating the weak Markov property.  In order to verify the claim, we suppose the claim fails and let $w$ be the first vertex (from $u$) where the two spins agree with each other. Conditioned on spins from $u$ to $w$, the two marginals at $v$ are the same (by the weak Markov property) and thus have to agree in a monotone coupling.
\end{remark}

In light of Remark~\ref{remark-sad}, we will seek for admissible couplings with a desirable property even weaker than the weak Markov field property. To this end, we will explore the spins using certain ``adaptive'' algorithm and then wish to argue that the marginal measures on the unexplored region remain to be Ising measures. This motivates us to consider the \emph{adaptive admissible coupling} (see Definition~\ref{def-adaptive-admissible-coupling} below). Let $\Xi_k = \{(\sigma^{(1)}, \ldots, \sigma^{(k)}) \in \{-1, 1\}^k : \sigma^{(i)} \leq \sigma^{(j)} \mbox{ for all } i \prec j\}$. For $\theta_1, \ldots, \theta_k$ which are measures on $\{-1, 1\}$, we say that $\theta_1, \ldots, \theta_k$ are admissible if $\theta_i(1)\leq \theta_j(1)$ for all $i \prec j$. In this case, let $\theta$ be the monotone coupling of $\theta_1, \ldots, \theta_k$. That is, $\theta$ is the joint measure of $(\sigma_1, \ldots, \sigma_k)$, which is defined in terms of a uniform variable $U$ on $[0, 1]$ such that
$$\sigma_i =  -1 \mbox{ if and only if } U \leq 1-\theta_i(1)\,.$$
Clearly, $\theta$ is supported on $\Xi_k$ and its marginals are $\theta_1, \ldots, \theta_k$. In addition, $\theta$ is consistent, i.e.,
\begin{equation}\label{eq-theta-consistent}
\mbox{The projection of }\theta \mbox{  onto the first $(k-1)$ spins is the monotone coupling for } \theta_1, \ldots, \theta_{k-1}\,.
\end{equation}
In order to define adaptive admissible couplings, we make use of exploration procedures. An exploration procedure can be encoded by a family of deterministic maps $\{f_V: V\subset \Lambda, V\neq \Lambda\}$ where $f_V$ is a mapping that maps an admissible configuration on $V$ to a vertex in  $\Lambda \setminus V$. That is to say, if we have explored a set $V\subset \Lambda$ and the spin configuration on $V$ is given by $(\sigma^{(1)}, \ldots, \sigma^{(k)})_V$, then the next vertex we will explore is $f_V((\sigma^{(1)}, \ldots, \sigma^{(k)})_V)$.
\begin{definition}\label{def-adaptive-admissible-coupling}
For each exploration procedure $\{f_V\}$, we associate an admissible coupling in the following manner. Let $\mathcal V_0 = \emptyset$. For $t\geq 1$, let $v_t  = f_{\mathcal V_{t-1}}((\sigma^{(1)}, \ldots, \sigma^{(k)})_{\mathcal V_{t-1}})$. Let $\mathcal V_t = \mathcal V_{t-1} \cup \{v_t\}$. Quenched on the realization of $\{\mathcal V_{t-1}, (\sigma^{(1)}, \ldots, \sigma^{(k)})_{\mathcal V_{t-1}}\}$, for $1\leq i\leq k$ let $\theta_i^{(t)}(\pm 1) = \mu^{(i)}(\sigma^{(i)}_{v_{t}} = \pm1 \mid  \sigma^{(i)}_{\mathcal V_{t-1}})$. Let $\theta^{(t)}$ be the monotone coupling of $\theta^{(t)}_1, \ldots, \theta^{(t)}_k$, and we sample $(\sigma^{(1)}, \ldots, \sigma^{(k)})_{v_t}$ according to $\theta^{(t)}$. We repeat this procedure until $t = \#\Lambda$. We let $\pi$ be the measure on $(\sigma^{(1)}, \ldots, \sigma^{(k)})$ at the end of the procedure. In addition, we say that a random set $\mathcal V$ is a \emph{stopping set} if $\{\mathcal V = \mathcal V_t = V_t\}$ (for any deterministic $V_t \subset \Lambda$) is measurable with respect to $\{(\sigma^{(1)}, \ldots, \sigma^{(k)})_{V_{t}})\}$.
\end{definition}
\begin{remark}
In the study of spin models, it is common to use an exploration procedure to discover certain observables (such as interfaces) associated with spin configurations. Often times, an instance of spin configurations is sampled a priori (which is usually sampled according to a Gibbs measure) and then the exploration procedure is performed on this instance. This is different from Definition~\ref{def-adaptive-admissible-coupling}, where the spin configuration is sampled as the exploration procedure evolves and more importantly the measure on spin configurations depends on the exploration procedure.
\end{remark}

\begin{lemma}\label{lem-adaptive-admissible-coupling}
For each exploration procedure, the measure $\pi$ given in Definition~\ref{def-adaptive-admissible-coupling} is a well-defined admissible coupling. In addition, for any stopping set $\mathcal V$, given the realization of $\mathcal V$ and $(\sigma^{(1)}, \ldots, \sigma^{(k)})_{\mathcal V}$, the conditional measure of $\pi$ restricted on $\mathcal V^c$ has marginals corresponding to Ising measures on $\mathcal V^c$ with boundary condition $\sigma^{(i)}_{\partial \mathcal V^c}$ and external field $\{h^{(i)}_v: v\in \mathcal V^c\}$.
\end{lemma}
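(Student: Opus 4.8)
The plan is to prove the three assertions in order: (i) $\pi$ is well-defined; (ii) $\pi$ is an admissible coupling of $\mu^{(1)},\dots,\mu^{(k)}$; (iii) the stopping-set Markov property. For (i), I would note that at each step $t$ the only thing that could go wrong is that the conditional single-site measures $\theta_1^{(t)},\dots,\theta_k^{(t)}$ fail to be admissible, i.e.\ fail $\theta_i^{(t)}(1)\le\theta_j^{(t)}(1)$ whenever $i\prec j$, so that the monotone coupling $\theta^{(t)}$ is not defined. This is where monotonicity of the Ising model enters: by induction on $t$, the partial configuration $(\sigma^{(1)},\dots,\sigma^{(k)})_{\mathcal V_{t-1}}$ is admissible (each step samples from $\theta^{(t)}$ which is supported on $\Xi_k$, hence preserves the coordinatewise order on the newly revealed vertex, and the exploration map $f_V$ is only ever applied to admissible configurations); then for $i\prec j$ we have $\tau^{(i)}\le\tau^{(j)}$, $h^{(i)}\le h^{(j)}$, and the conditioning configurations satisfy $\sigma^{(i)}_{\mathcal V_{t-1}}\le\sigma^{(j)}_{\mathcal V_{t-1}}$, so the FKG/monotonicity comparison for Ising measures (conditional probabilities are monotone in boundary data and external field, c.f.\ \cite[Section 2.2]{AP18}) gives $\theta_i^{(t)}(1)\le\theta_j^{(t)}(1)$. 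Hence $\theta^{(t)}$ exists and $\pi$ is well-defined, and moreover $\pi$ is supported on $\Sigma_k$.

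For (ii), the point is that the marginal of $\pi$ on the $i$-th coordinate is exactly $\mu^{(i)}$. This follows from the consistency property \eqref{eq-theta-consistent}: projecting the construction onto a single coordinate $i$, at step $t$ we sample $\sigma^{(i)}_{v_t}$ from $\theta_i^{(t)}(\cdot)=\mu^{(i)}(\sigma^{(i)}_{v_t}=\cdot\mid\sigma^{(i)}_{\mathcal V_{t-1}})$, and $v_t$ is a deterministic function of $(\sigma^{(1)},\dots,\sigma^{(k)})_{\mathcal V_{t-1}}$. One must check that, even though $v_t$ depends on \emph{all} coordinates and not just coordinate $i$, the one-coordinate marginal is still that of a valid sequential sampling of $\mu^{(i)}$; this is because, conditionally on the full revealed configuration on $\mathcal V_{t-1}$, the next site and the conditional law of $\sigma^{(i)}_{v_t}$ depend on the history only through the $\mu^{(i)}$-conditional probability given $\sigma^{(i)}_{\mathcal V_{t-1}}$, so summing out the other coordinates one recognizes the chain rule for $\mu^{(i)}$ along a (history-dependent but adapted) ordering of $\Lambda$. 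Since any such sequential specification reconstructs $\mu^{(i)}$ exactly (the Ising measure is a product-free positive measure on a finite set, and the chain rule holds for any ordering), the marginal is $\mu^{(i)}$.

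For (iii), let $\mathcal V$ be a stopping set, so $\{\mathcal V=V_t\}$ is measurable w.r.t.\ $(\sigma^{(1)},\dots,\sigma^{(k)})_{V_t}$ for each realization $V_t$ of $\mathcal V_t$. I would condition on $\{\mathcal V=V,\ (\sigma^{(1)},\dots,\sigma^{(k)})_{\mathcal V}=\eta\}$ and argue that, for each fixed $i$, the conditional law of $\sigma^{(i)}_{\mathcal V^c}$ is the Ising measure on $\mathcal V^c$ with boundary condition $\eta^{(i)}_{\partial\mathcal V^c}$ and external field $\{h^{(i)}_v:v\in\mathcal V^c\}$. The key input is the \emph{optional-stopping / strong Markov} principle for the sequential sampling: because $\mathcal V$ is a stopping set, the event we condition on is determined by the revealed data on $\mathcal V$, and the remaining steps of the algorithm continue to sample site $v_{t+1},v_{t+2},\dots$ from the $\mu^{(i)}$-conditional probabilities given the current $i$-th partial configuration; by the spatial Markov (Gibbs) property of the Ising measure $\mu^{(i)}$, these conditional probabilities depend only on $\sigma^{(i)}$ on $\mathcal V$ through the boundary $\partial\mathcal V^c$ together with the external field inside $\mathcal V^c$. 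Running this sequential sampling to completion therefore reconstructs exactly $\mu^{\mathcal V^c,\,\eta^{(i)}_{\partial\mathcal V^c}}$ with field $\{h^{(i)}_v\}_{v\in\mathcal V^c}$, which is the claim.

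I expect the main obstacle to be (iii): one has to be careful that the stopping-set hypothesis is used correctly, namely that conditioning on $\{\mathcal V=V,\,(\sigma)_{\mathcal V}=\eta\}$ does not secretly condition on any information about the spins in $\mathcal V^c$ (this is exactly what ``measurable w.r.t.\ $(\sigma^{(1)},\dots,\sigma^{(k)})_{V_t}$'' buys us), and that the ``rest of the algorithm'' is itself a valid sequential specification of the Ising measures on $\mathcal V^c$ — which requires invoking the Gibbs/Markov property of each $\mu^{(i)}$ to localize the conditional probabilities to $\partial\mathcal V^c$. The well-definedness in (i) is a clean induction once the monotonicity comparison is quoted, and (ii) is the chain rule; neither should present difficulty.
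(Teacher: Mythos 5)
Your proposal is correct and follows essentially the same route as the paper: well-definedness by forward induction using monotonicity of the conditional single-site laws, and the marginal/stopping-set properties from the fact that, given the full revealed history, the $i$-th coordinate is always sampled from the correct $\mu^{(i)}$-conditional (the paper packages your parts (ii) and (iii) into a single backward induction establishing $\pi(\sigma^{(i)}_{\Lambda\setminus V_{t-1}}\in\cdot\mid (\sigma^{(1)},\ldots,\sigma^{(k)})_{\mathcal V_{t-1}},\mathcal V_{t-1}=V_{t-1})=\mu^{(i)}(\sigma^{(i)}_{\Lambda\setminus V_{t-1}}\in\cdot\mid\sigma^{(i)}_{V_{t-1}})$, but the content is the same as your chain-rule plus strong-Markov argument).
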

\begin{proof}
The measure $\pi$ is well-defined since we can inductively verify that for $t=0, 1, 2,\ldots$, the sequence $\theta^{(t)}_1, \ldots, \theta^{(t)}_k$ is admissible and thus $(\sigma^{(1)}, \ldots, \sigma^{(k)})_{\mathcal V_{t+1}}$ is admissible. To prove the second part of the statement, it suffices to show that for each $1\leq i\leq k$ and $1\leq t\leq \#\Lambda$,
\begin{equation}\label{eq-admissible-property}
\pi(\sigma^{(i)}_{\Lambda \setminus V_{t-1}} \in \cdot \mid  (\sigma^{(1)}, \ldots, \sigma^{(k)})_{\mathcal V_{t-1}}, \mathcal V_{t-1} = V_{t-1}) = \mu^{(i)}(\sigma^{(i)}_{\Lambda \setminus V_{t-1}} \in \cdot  \mid  \sigma^{(i)}_{V_{t-1}})\,.
\end{equation}
We prove \eqref{eq-admissible-property} by induction for $t = \# \Lambda, \ldots, 1$. It is obvious from Definition~\ref{def-adaptive-admissible-coupling} that \eqref{eq-admissible-property} holds for $t = \# \Lambda$. Suppose \eqref{eq-admissible-property} holds for $t$, we then deduce for $t-1$ that
\begin{align*}
&\pi(\sigma^{(i)}_{\Lambda \setminus (V_{t-2} \cup \{v_{t-1}\})} \in \cdot,  \sigma^{(i)}_{v_{t-1}}  = \pm 1\mid  (\sigma^{(1)}, \ldots, \sigma^{(k)})_{\mathcal V_{t-2}}, \mathcal V_{t-2} = V_{t-2}) \\
&= \mu^{(i)}(\sigma^{(i)}_{v_{t-1}} = \pm1 \mid  \sigma^{(i)}_{V_{t-2}}) \times  \mu^{(i)}(\sigma^{(i)}_{\Lambda \setminus (V_{t-2} \cup \{v_{t-1}\})} \in \cdot \mid  \sigma^{(i)}_{V_{t-2}}, \sigma^{(i)}_{v_{t-1}} = \pm1)\,.
\end{align*}
This implies that $\pi(\sigma^{(i)}_{\Lambda \setminus V_{t-2}}  \in \cdot\mid  (\sigma^{(1)}, \ldots, \sigma^{(k)})_{\mathcal V_{t-2}}, \mathcal V_{t-2} = V_{t-2}) = \mu^{(i)}(\sigma^{(i)}_{\Lambda \setminus V_{t-2}} \in \cdot  \mid  \sigma^{(i)}_{V_{t-2}})$, thereby completing the proof by induction.
\end{proof}
In what follows, we refer to $\pi$ as in Definition~\ref{def-adaptive-admissible-coupling} as an adaptive admissible coupling. In addition, we will always define adaptive admissible couplings by presenting an exploration procedure and then consider the associated admissible coupling given in Definition~\ref{def-adaptive-admissible-coupling}. For convenience of exposition, we usually describe an exploration procedure in words rather than specifying the maps $\{f_V\}$.

\subsection{A multi-scale analysis via another perturbation argument}\label{sec-multi-scale}

Let $\alpha>1$ be as in Proposition~\ref{prop-crossing-dimension}. Let $\sqrt{1/\alpha} < \alpha' < 1$.
Let $N_0 = N_0(\epsilon, \beta)$  be a large number to be chosen.
For each $N \geq N_0$ (of the form $4^n$), set  $\Delta = \Delta(N) = N^{-\alpha(\alpha')^2}$. In the rest of the paper, we consider the following perturbation:
\begin{equation}\label{eq-def-tilde-h-again}
\tilde h^{(N)}_v = \begin{cases}
h_v + \Delta, &\mbox{ for } v\in \Lambda_N \setminus \Lambda_{N/4}\,,\\
h_v, &\mbox{ for } v\in \Lambda_{N/4}\,.
\end{cases}
\end{equation}
We denote by $\tilde \mu^{\Lambda_{N}, \pm}$ the Ising measures on $\Lambda_{N}$ with respect to plus/minus boundary conditions and external field $\{\tilde h^{(N)}_v: v\in \Lambda_{N}\}$, and denote by $\tilde\sigma^{\Lambda_{N}, \pm}$ the spins sampled according to $\tilde \mu^{\Lambda_{N}, \pm}$. In this whole section except in \eqref{eq-def-E} and \eqref{eq-prob-D-N}, we will quench on the realization of $\{h_v\}$ and thus the external field is viewed as deterministic.

\subsubsection{A construction of an adaptive admissible coupling}\label{sec-contruction-adaptive-coupling}
We will define the following adaptive admissible coupling $\pi_{\Lambda_N}$ for $\mu^{\Lambda_{N}, \pm}$ and $\tilde \mu^{\Lambda_{N}, \pm}$.
According to Definition~\ref{def-adaptive-admissible-coupling}, in order to specify $\pi_{\Lambda_N}$, we only need to specify the exploration procedure (i.e., the order of vertices in which we sample the spins), as described as follows.  Throughout the procedure, we let $\mathcal C_*^{\Lambda_N}$ be the collection of vertices $v$ which have been sampled such that $\sigma^{\Lambda_{N}, +}_v > \sigma^{\Lambda_{N}, -}_v$ and $\tilde \sigma_v^{\Lambda_{N}, +} > \tilde \sigma_v^{\Lambda_{N}, -}$. We first sample spins at vertices on $\partial \Lambda_{k}$ for $k= N - 1, N-2, \ldots, \frac{N}{2}$. For vertices on $\partial \Lambda_k$, for concreteness we sample in clockwise order starting from the right top corner. Next,
let $K = \lfloor N^{\alpha' \alpha} \rfloor$ and $\ell = \lfloor \frac{1}{4}N^{1-\alpha'}\rfloor$. A comment on the order of the scales chosen: the exploration procedure below contains $\ell$ phases, and in every phase we consider an annulus where the inner and outer boundaries have Euclidean distance $N^{\alpha'}$ and thus by Proposition~\ref{prop-crossing-dimension} typically have intrinsic distance $\geq K \gg N$. This is why we can hope to gain a contraction when comparing the number of disagreements on an annulus to that on its neighboring (larger) annulus (see \eqref{eq-m-j-contraction} below).

We now turn to the description of the exploration procedure. For each $1\leq j\leq \ell$ our construction employs the following procedure which we refer to as Phase $j$ (see Figure~\ref{figure-coupling} for an illustration). Let $N'  = \frac{N}{2} - (j-1) N^{\alpha'}$.
\begin{itemize}
\item We set $A_{j, 0}=\partial  \Lambda_{N'}\cap \mathcal C_*^{\Lambda_{N}}$, $V_{j,0}= \Lambda_N\setminus \Lambda_{N'}$, and for $k=0, 1, \ldots,  K$, we inductively employ the following procedure (which we refer to as stage). At the beginning of Stage $k+1$,  we first set $A_{j, k+1} = \emptyset$ and  $V_{j, k+1} = V_{j, k}$.
\begin{itemize}
\item If $A_{j, k} = \emptyset$ (which we denote as event $\mathcal E_{j, k, \emptyset}$), we sample the unexplored vertices in $\Lambda_{N}$ in a prefixed order (which can be arbitrary) and stop our procedure. Otherwise,
 we explore all the neighbors of $A_{j, k}$ (in a certain prefixed order, which can be arbitrary) which are in $\Lambda_{N'}\setminus  V_{j, k}$ (that is, vertices which have not been explored) and sample the spins at these vertices. We also put these vertices into $V_{j, k+1}.$
\item  If a newly sampled vertex is in $\partial \Lambda_{N' - N^{\alpha'}}$ (we denote this as event $\mathcal E_{j, k, d}$, where the subscript $d$ suggests an event related to the intrinsic distance), we sample the unexplored vertices in $\Lambda_{N}$ in a prefixed order (which can be arbitrary) and stop our procedure.  Otherwise, if
 a newly sampled vertex ends up in $\mathcal C_*^{\Lambda_{N}}$ then we add it to $ A_{j, k+1}$. (For $k\geq 1$, it is clear that $ A_{j, k}$ records all the vertices in $\Lambda_{N'}$ that are of $d_{\mathcal C_*^{\Lambda_{N}}}$-distance $k$ to $\partial \Lambda_{N'}$ and  $V_{j, k}$ records all the explored vertices up to Stage $k$.)
\end{itemize}
\item Sample the unexplored vertices in $\Lambda_{N'} \setminus \Lambda_{N' - N^{\alpha'}}$ in a prefixed order (which can be arbitrary).
\end{itemize}
Finally, if the procedure is not yet stopped after $\ell$ phases, we sample the unexplored vertices in $\Lambda_{N}$ in a prefixed order (which can be arbitrary).
\begin{remark}
(1) Later in the analysis, when we refer to sets such as $A_{j, k}$, $V_{j, k}$ we mean to use their values at the end of our procedure. (2) Note that in the preceding procedure, unless some event of the form $\mathcal E_{j, k, \emptyset}$ or $\mathcal E_{j, k, d}$ occurred, the exploration in all the $\ell$ phases is within $\Lambda_N \setminus \Lambda_{N/4}$.
\end{remark}
\begin{center}
 \begin{figure}[h]
 \vspace{-2cm} \hspace{-1.5cm}
  \includegraphics[width=18cm]{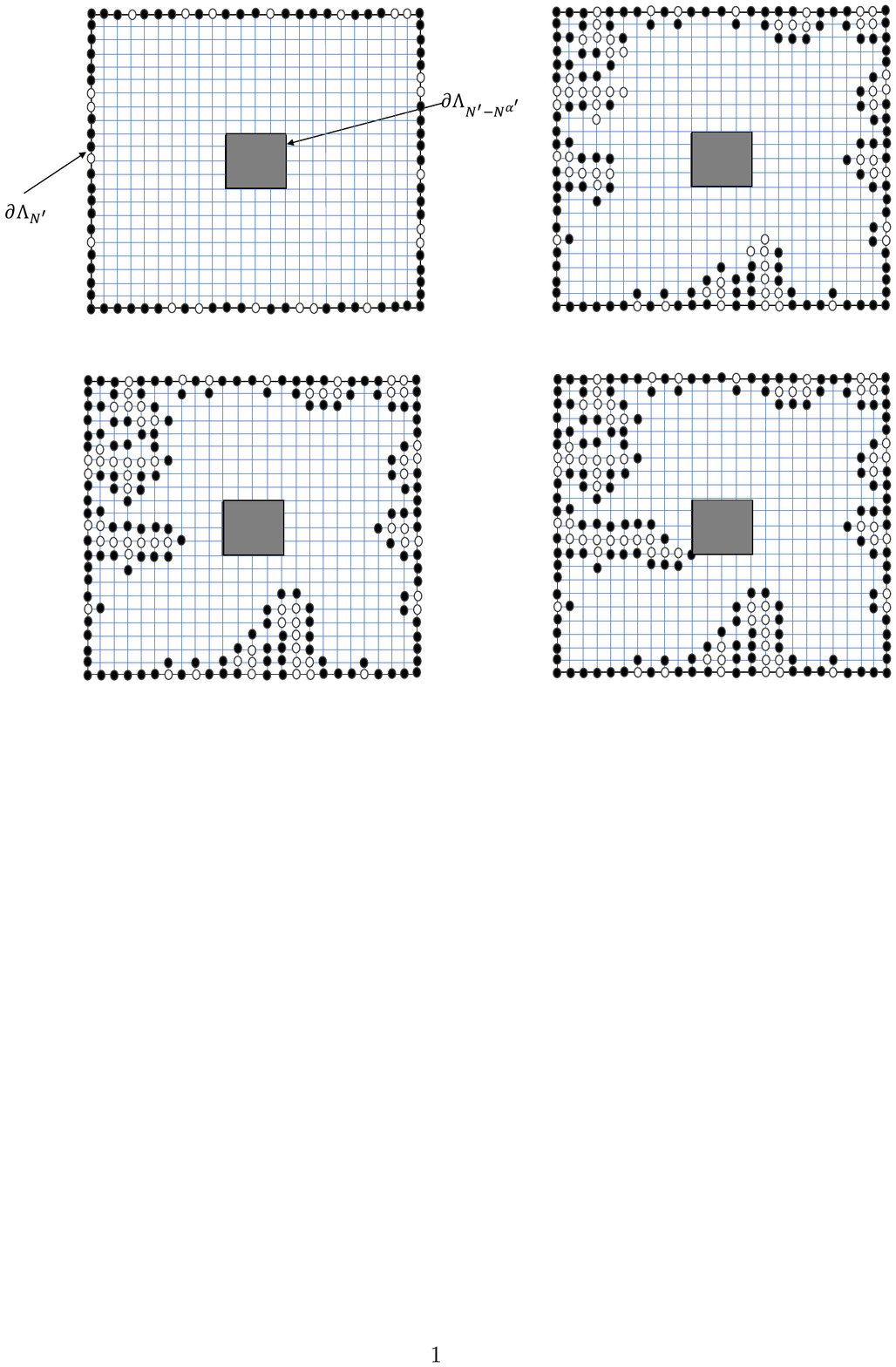}\vspace{-12cm}
  \\  \caption{Illustration for Phase $j$ of the construction in Section~\ref{sec-contruction-adaptive-coupling}. The inside square is $\Lambda_{N' - N^{\alpha'}}$, whose size has been reduced in the picture for better demonstration. On lattice points, empty indicates an unexplored vertex, an open circle indicates a vertex in $\mathcal C_*^{\Lambda_N}$, and a solid disk indicates a vertex not in $\mathcal C_*^{\Lambda_N}$. The top-left illustrates the beginning of Phase $j$, where vertices on $\partial \Lambda_{N'}$ have been explored (vertices outside have been explored too but we did not draw); the top-right illustrates the middle of Phase $j$ (here $k=5$); the bottom-left picture illustrates event $\mathcal E_{j, k, \emptyset}$ (here $k = 8$); the bottom right  event illustrates  $\mathcal E_{j, k, d}$ (here $k = 12$). }\label{figure-coupling}
\end{figure}
\end{center}
\subsubsection{Another perturbation argument}\label{sec-another-perturbation}
We use $\tilde H^{\Lambda_N, \pm}$, $\tilde F^{\Lambda_N, \pm}$, $\tilde \sigma^{\Lambda_N, \pm}$ to denote tilde versions of $H^{\Lambda_N, \pm}$, $F^{\Lambda_N, \pm}$, $\sigma^{\Lambda_N, \pm}$, i.e., defined analogously but with respect to the field $\{\tilde h_v^{(N)}\}$ defined as in \eqref{eq-def-tilde-h-again}. Without further notice, we will always consider measures where we couple all these Ising spins together. Thus, in particular, $\mathcal C^{\Lambda_N}$ and $\tilde {\mathcal C}^{\Lambda_N}$ are defined in the same probability space and
we can then define $\mathcal C_*^{\Lambda_N} = \tilde {\mathcal C}^{\Lambda_N} \cap  {\mathcal C}^{\Lambda_N}$.

We need some preparation before presenting our perturbative analysis. Suppose that $\mathcal V$ is a stopping set (see Definition~\ref{def-adaptive-admissible-coupling}) obtained when constructing $\pi_{\Lambda_N}$ described in Section~\ref{sec-contruction-adaptive-coupling}.  Let $\pi'_{\mathcal V^c}$ be the restriction of $\pi_{\Lambda_N}$ to $\mathcal V^c$.  (We use prime in the notation $\pi'_{\mathcal V^c}$ as we wish to save $\pi_{\mathcal V^c}$ for later use.) By Lemma~\ref{lem-adaptive-admissible-coupling} and our definition of $\pi_{\Lambda_N}$, we see that $\pi'_{\mathcal V^c}$ depends on $(\sigma^{\Lambda_N, \pm})_{\mathcal V}, (\tilde \sigma^{\Lambda_N, \pm})_{\mathcal V}$ only through
$(\sigma^{\Lambda_N, \pm})_{\partial \mathcal V^c}, (\tilde \sigma^{\Lambda_N, \pm})_{\partial \mathcal V^c}$. Thus, we may denote by $(\sigma^{\mathcal V^c, (\sigma^{\Lambda_N, \pm})_{\partial \mathcal V^c}}, \tilde \sigma^{\mathcal V^c, (\tilde \sigma^{\Lambda_N, \pm})_{\partial \mathcal V^c}})$ the spin configurations sampled according to $\pi'_{\mathcal V^c}$ with corresponding boundary conditions on $\partial \mathcal V^c$.
Thus,
\begin{equation}\label{eq-put-admissible-coupling-together}
((\sigma^{\Lambda_N, \pm}_{\mathcal V}, \sigma^{\mathcal V^c, (\sigma^{\Lambda_N, \pm})_{\partial \mathcal V^c}}),   (\tilde \sigma^{\Lambda_N, \pm}_{\mathcal V}, \tilde \sigma^{\mathcal V^c, (\tilde \sigma^{\Lambda_N, \pm})_{\partial \mathcal V^c}})) \mbox{ has law } \pi_{\Lambda_N}\,.
\end{equation}
In what follows, we will mainly consider the measure $\pi'_{\mathcal V^c}$. For clarity of exposition, we quench on the realization of $\mathcal V = V$. Let $S =  V^c$ and $\Gamma = \partial S$ (thus we have $S \subset S_\Gamma$).  Further, we quench on the values of $(\sigma^{\Lambda_N, \pm})_\Gamma,  (\tilde \sigma^{\Lambda_N, \pm})_\Gamma$ by
\begin{equation}\label{eq-quenched-realization}
(\sigma^{\Lambda_N, \pm})_\Gamma = \tau^{\pm}, (\tilde \sigma^{\Lambda_N, \pm})_\Gamma = \tilde \tau^{\pm}\,, \mbox{ where } \tau^\pm, \tilde \tau^\pm \in \{-1, 1\}^\Gamma\,.
\end{equation}

 For $v\in \Gamma$ (in fact, any $v\in \Lambda_N$), by admissibility there are only six possible values for $(\tau^+_v, \tau^-_v, \tilde \tau^+_v, \tilde\tau^-_v)$ as shown in Table \ref{tab:1}. For each such possible spin value, we will define a ``hat'' version $(\hat \tau^+_v, \hat \tau^-_v,  \hat {\tilde \tau}^+_v,  \hat {\tilde \tau}^{-}_v)$, where the definition is given in Table \ref{tab:2}. Note that the hat version is a modification of the original spin value, and we emphasize the change in Table~\ref{tab:2} by circling out the modifications. We will explain why we introduced the hat version of the spin on $\Gamma$ after a number of definitions.  From Tables~\ref{tab:1} and \ref{tab:2}, we see that
\begin{equation}\label{eq-hat-spin-monotonicity}
\hat \tau^+ \geq \hat \tau^- \geq \tau^-, \tilde \tau^+ \geq \hat {\tilde \tau}^+ \geq \hat {\tilde \tau}^-, \hat {\tilde \tau}^+ =  \hat \tau^+ \geq \tau^+, \hat {\tilde \tau}^- = \hat \tau^{-} = \tilde \tau^{-}\,.
\end{equation}
From a notation point of view, despite the fact that $\hat \tau^\pm = \hat {\tilde \tau}^\pm$, we still differentiate these two notations because our mental picture is that the boundary conditions $\hat \tau^{\pm}$ are matched to external field $\{h_v\}$ and the boundary conditions $\hat {\tilde \tau}^\pm$ are matched to external field $\{\tilde h^{(N)}_v\}$.

\begin{table}[h!]
\begin{minipage}{.5\linewidth}
\caption{Original spins on $\Gamma$}
   \centering
    \begin{tabular}{||c c c c c||}
 \hline
type & $\tau_v^{ +}$ & $\tau_v^{-}$ & $\tilde \tau_v^{+}$ & $\tilde\tau_v^{ -}$ \\ [0.5ex]
 \hline\hline
 a. & $- 1$ & $- 1$ & $- 1$ & $- 1$\\
 \hline
 b. & $- 1$ & $- 1$ & +1 & $- 1$ \\
 \hline
 c. & $- 1$ & $- 1$ & +1 & +1 \\
 \hline
 d. & +1 & +1 & +1 & +1 \\
 \hline
 e. & +1 & $- 1$ & +1 & +1 \\
 \hline
 f. & +1 & $- 1$ & +1 & $- 1$ \\ [1ex]
 \hline
 \end{tabular}

\label{tab:1}
\end{minipage}
\begin{minipage}{.55\linewidth}
\caption{The hat version of the spins on $\Gamma$}
\centering
\begin{tabular}{||c c c c c||}
 \hline
type & $\hat \tau_v^{ +}$ & $\hat \tau_v^{ -}$ & $\hat {\tilde \tau}_v^{ +}$ & $\hat {\tilde\tau}_v^{-}$ \\ [0.5ex]
 \hline\hline
 a. & $- 1$ & $- 1$ & $- 1$ & $- 1$\\
 \hline
 $\bullet b$. & $- 1$ & $- 1$ & \circled{$-1$} & $- 1$ \\
 \hline
 $\bullet c$. & \circled{+1} & \circled{$+ 1$} & $ +1$ & $ +1 $ \\
 \hline
 d. & +1 & +1 & +1 & +1 \\
 \hline
 $\bullet e$. & +1 & \circled{+1} & +1 & +1 \\
 \hline
 f. & +1 & $- 1$ & +1 & $- 1$ \\ [1ex]
 \hline

\end{tabular}
\label{tab:2}
\end{minipage}
\end{table}

Recall that $\pi'_S$ is the admissible coupling for Ising measures with boundary conditions and external fields $((\tau^{\pm})_\Gamma, \{h_v\})$, $((\tilde \tau^{ \pm})_\Gamma, \{\tilde h^{(N)}_v\})$, where the order of sampling vertex is given by that of $\pi_{\Lambda_N}$ conditioned on spin configurations on the stopping set $\mathcal V= V$. In addition, we can extend $\pi'_S$ to an adaptive admissible coupling $\pi_S$ for Ising measures with boundary conditions and external fields $((\tau^{\pm})_\Gamma, \{h_v\})$, $((\tilde \tau^{ \pm})_\Gamma, \{\tilde h^{(N)}_v\})$, $((\hat \tau^{\pm})_\Gamma, \{h_v\})$,  $((\hat {\tilde \tau}^{\pm})_\Gamma, \{\tilde h^{(N)}_v\})$, where the order of sampling vertices is determined by the coupling $\pi'_S$. Let $(\sigma^{S, \tau^\pm}, \tilde \sigma^{S, \tilde \tau^\pm}, \sigma^{S, \hat \tau^\pm}, \tilde \sigma^{S, \hat {\tilde {\tau}}^\pm})$ be the spin configuration sampled according to $\pi_S$ (note that we use the tilde symbol on $\sigma$ to emphasize the dependence on the external field $\{\tilde h^{(N)}_v\}$; similarly for $H$ and $F$ below).  By \eqref{eq-theta-consistent}, we see that the projection of $\pi_S$ onto $(\sigma^{S, \tau^\pm}, \tilde \sigma^{S, \tilde \tau^{\pm}})$ has measure $\pi'_S$. As a result, we will simply use $\pi_S$ in what follows.
 We also let $H^{S, \tau^{ \pm}},  \tilde H^{S, \tilde \tau^{ \pm}}, H^{S,  \hat \tau^{ \pm}}, \tilde H^{S, \hat {\tilde \tau}^{\pm}}$ denote Hamiltonians for corresponding Ising spins. Similarly, we denote by $F^{S, \tau^{\pm}}$,  $\tilde F^{S, \tilde \tau^{\pm}}$, $F^{S,  \hat \tau^{\pm}}$, $\tilde F^{S, \hat {\tilde \tau}^{\pm}}$   the log-partition-functions of corresponding Ising measures. Define
$$\mathcal C^{S, \tau^\pm} = \{v\in S: \sigma^{S, \tau^+}_v = 1, \sigma^{S, \tau^{-}}_v = -1\}$$
and similarly define $\tilde {\mathcal C}^{S, \tilde \tau^\pm}, \mathcal C^{S, \hat \tau^\pm}, \tilde {\mathcal C}^{S, \hat {\tilde \tau}^\pm}$. Define $\mathcal C_*^{S, \tau^\pm, \tilde \tau^\pm} = \mathcal C^{S, \tau^\pm} \cap \tilde {\mathcal C}^{S, \tilde \tau^\pm}$ and $\mathcal C_*^{S, \hat \tau^\pm, \hat {\tilde \tau}^\pm} = \mathcal C^{S, \hat \tau^\pm} \cap \tilde {\mathcal C}^{S, \hat {\tilde \tau}^\pm}$.

Now we have necessary notations to explain the reason for introducing the hat version of the spins on $\Gamma$. We wish to bound $\#(\mathcal C_*^{\Lambda_N} \cap S\cap (\Lambda_N\setminus \Lambda_{N/4}))$ in terms of $\#(\mathcal C_*^{\Lambda_N} \cap \Gamma)$. One way to achieve this is to track the change of difference between the log-partition-functions with plus and minus boundary conditions when the external field is perturbed. We see that on the one hand, such change of difference can be  bounded from below in terms of $\#(\mathcal C_*^{\Lambda_N} \cap S \cap (\Lambda_N\setminus \Lambda_{N/4}))$ (see Lemma~\ref{lem-free-energy-differences}); and on the other hand such change can be bounded from above by the number of disagreements for spins on $\Gamma$ with respect to the plus and minus boundary conditions. However, when approaching the upper bound, the spin values of Type b, c, e as in Table \ref{tab:1} will also contribute to the upper bound despite the fact that they do not belong to $\mathcal C_*^{\Lambda_N} \cap \Gamma$. To address this, we introduce the hat version of the spins, which are in agreement  except on  $\mathcal C_*^{\Lambda_N} \cap \Gamma$. A crucial feature  as we will show in Lemma~\ref{lem-hat-version-good}, is that under the admissible coupling $\pi_S$ we have $\mathcal{C}_*^{S, \tau^{\pm}, \tilde \tau^{\pm}}\subset \mathcal C_*^{S, \hat \tau^{\pm}, \hat {\tilde \tau}^{\pm}}$. Therefore, the intended lower bound on the change of log-partition-functions is still valid for the hat version. Another crucial feature of the hat version of the spin is that
\begin{equation}\label{eq-hat-zero}
\begin{split}
&\{v\in \Gamma: \tau^+_v=\tilde \tau^+_v=1,  \tau^-_v= \tilde \tau^-_v = -1\} = \{v\in \Gamma: \hat \tau^+_v=  \hat {\tilde \tau}^+_v = 1, \hat \tau^-_v =\hat {\tilde \tau}^-_v = -1\}\\
&=  \{v\in \Gamma: \hat \tau^+_v = 1, \hat \tau^-_v  = -1\} =  \{v\in \Gamma:  \hat {\tilde \tau}^+_v = 1, \hat {\tilde \tau}^-_v = -1\}\,.
\end{split}
\end{equation}

\begin{lemma}\label{lem-hat-version-good}
Under the admissible coupling $\pi_S$, we have $\mathcal{C}_*^{S, \tau^{\pm}, \tilde \tau^{\pm}}\subset \mathcal C_*^{S, \hat \tau^{\pm}, \hat {\tilde \tau}^{\pm}}$.
\end{lemma}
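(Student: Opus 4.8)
The plan is to verify the inclusion $\mathcal C_*^{S, \tau^{\pm}, \tilde \tau^{\pm}}\subset \mathcal C_*^{S, \hat \tau^{\pm}, \hat {\tilde \tau}^{\pm}}$ pointwise, using nothing more than the admissibility of $\pi_S$ together with the order relations \eqref{eq-hat-spin-monotonicity} among the boundary conditions and the field inequality $h \le \tilde h^{(N)}$ (which holds since $\Delta(N)>0$ in \eqref{eq-def-tilde-h-again}). Fix $v\in \mathcal C_*^{S,\tau^\pm,\tilde\tau^\pm}$, so that $\sigma^{S,\tau^+}_v=\tilde\sigma^{S,\tilde\tau^+}_v=1$ and $\sigma^{S,\tau^-}_v=\tilde\sigma^{S,\tilde\tau^-}_v=-1$. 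By Lemma~\ref{lem-adaptive-admissible-coupling} the configuration sampled from $\pi_S$ is admissible, i.e.\ $\sigma^{(i)}\le\sigma^{(j)}$ coordinatewise whenever $(\tau^{(i)},h^{(i)})\prec(\tau^{(j)},h^{(j)})$ among the eight Ising measures coupled by $\pi_S$. It therefore suffices to deduce, for this fixed $v$, that the four hat-spins satisfy $\sigma^{S,\hat\tau^+}_v=\tilde\sigma^{S,\hat{\tilde\tau}^+}_v=1$ and $\sigma^{S,\hat\tau^-}_v=\tilde\sigma^{S,\hat{\tilde\tau}^-}_v=-1$, since that is exactly the statement $v\in \mathcal C^{S,\hat\tau^\pm}\cap\tilde{\mathcal C}^{S,\hat{\tilde\tau}^\pm}=\mathcal C_*^{S,\hat\tau^\pm,\hat{\tilde\tau}^\pm}$.

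For the plus-type spins: from \eqref{eq-hat-spin-monotonicity} we have $\hat\tau^+\ge\tau^+$ with the same external field $h$, so admissibility gives $\sigma^{S,\hat\tau^+}_v\ge\sigma^{S,\tau^+}_v=1$, hence $\sigma^{S,\hat\tau^+}_v=1$. For $\tilde\sigma^{S,\hat{\tilde\tau}^+}$ we use $\hat{\tilde\tau}^+=\hat\tau^+\ge\tau^+$ together with $\tilde h^{(N)}\ge h$, so $(\tau^+,h)\prec(\hat{\tilde\tau}^+,\tilde h^{(N)})$ and admissibility gives $\tilde\sigma^{S,\hat{\tilde\tau}^+}_v\ge\sigma^{S,\tau^+}_v=1$. (Here one must route through $\sigma^{S,\tau^+}$ rather than $\tilde\sigma^{S,\tilde\tau^+}$, because $\tilde\tau^+\ge\hat{\tilde\tau}^+$ is the wrong direction — this is precisely why Table~\ref{tab:2} circles the modifications on types c and e.) For the minus-type spins: from \eqref{eq-hat-spin-monotonicity}, $\hat\tau^-=\tilde\tau^-$ as boundary configurations, and $h\le\tilde h^{(N)}$, so $(\hat\tau^-,h)\prec(\tilde\tau^-,\tilde h^{(N)})$ and admissibility gives $\sigma^{S,\hat\tau^-}_v\le\tilde\sigma^{S,\tilde\tau^-}_v=-1$, hence $\sigma^{S,\hat\tau^-}_v=-1$. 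Finally $\hat{\tilde\tau}^-=\tilde\tau^-$ with the \emph{same} field $\tilde h^{(N)}$, so the two Ising measures $\tilde\mu^{S,\hat{\tilde\tau}^-}$ and $\tilde\mu^{S,\tilde\tau^-}$ coincide; applying admissibility in both directions (equivalently, noting that in Definition~\ref{def-adaptive-admissible-coupling} the one-site conditional laws $\theta^{(t)}_i$ for these two indices are identical at every step because they evolve from identical — empty — exploration histories) forces $\tilde\sigma^{S,\hat{\tilde\tau}^-}_v=\tilde\sigma^{S,\tilde\tau^-}_v=-1$. Combining the four conclusions completes the proof.

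There is no substantive obstacle: this is a bookkeeping exercise in the partial order $\prec$, and the only subtle point is that for $\tilde\sigma^{S,\hat{\tilde\tau}^+}$ (type c/e) the natural comparison with $\tilde\tau^+$ is unavailable, so one is forced to compare across external fields via $\sigma^{S,\tau^+}$ — which is exactly the purpose of the hat construction. I would present the argument in this order (plus-spins, then minus-spins), each in one line, and not belabor the admissibility-both-ways step for the coinciding measures.
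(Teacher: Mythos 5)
Your proof is correct and follows essentially the same route as the paper's: both arguments verify the inclusion pointwise by chaining the order relations \eqref{eq-hat-spin-monotonicity} through the admissibility of $\pi_S$, using exactly the same four comparisons ($\sigma^{S,\hat\tau^+}\ge\sigma^{S,\tau^+}$, $\tilde\sigma^{S,\hat{\tilde\tau}^+}\ge\sigma^{S,\tau^+}$, $\sigma^{S,\hat\tau^-}\le\tilde\sigma^{S,\tilde\tau^-}$, and $\tilde\sigma^{S,\hat{\tilde\tau}^-}=\tilde\sigma^{S,\tilde\tau^-}$). Your added remarks on why one must route the $\hat{\tilde\tau}^+$ comparison through $\sigma^{S,\tau^+}$, and on why the two coinciding measures yield equal spins under the adaptive coupling, are correct elaborations of steps the paper states without comment.
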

\begin{proof}
For $u\in \mathcal{C}_*^{S, \tau^{\pm}, \tilde \tau^{\pm}}$, we have $\sigma_u^{S, \tau^+}=\tilde{\sigma}_u^{S, \tilde \tau^+}=1$ and $\sigma_u^{S, \tau^-}=\tilde{\sigma}_u^{S, \tilde \tau^-}=-1$. By \eqref{eq-hat-spin-monotonicity} and the admissible coupling, we see that $\sigma_u^{S, \hat \tau^+} \geq  \sigma_u^{S, \tau^+}=1$; similarly, $\sigma_u^{S, \hat \tau^-} \leq \tilde \sigma_u^{S, \tilde \tau^-}=-1$. So $u\in \mathcal C^{S, \hat \tau^\pm}$. In addition, by \eqref{eq-hat-spin-monotonicity} and the admissible coupling, we see that $\tilde \sigma_u^{S, \hat {\tilde \tau}^+} \geq  \sigma_u^{S, \tau^+}=1$; similarly, $\tilde \sigma_u^{S, \hat {\tilde \tau}^-} = \tilde{\sigma_u}^{S, \tilde \tau^-} = -1$. So $u\in \tilde {\mathcal C}^{S, \hat {\tilde \tau}^\pm}$. Thus, $u\in  \mathcal C_*^{S, \hat \tau^{\pm}, \hat {\tilde \tau}^{\pm}}$ as required.
\end{proof}
\begin{cor}\label{cor-C*-percolates}
Under the admissible coupling $\pi_S$, we have $o\not\in \mathcal{C}_*^{S, \tau^{\pm}, \tilde \tau^{\pm}}$ provided that $\mathcal C_*^{\Lambda_N} \cap \Gamma  = \emptyset$.
\end{cor}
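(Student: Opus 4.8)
The plan is to reduce the claim to the ``hat'' version of the configurations via Lemma~\ref{lem-hat-version-good}, and then exploit that the hypothesis $\mathcal C_*^{\Lambda_N}\cap\Gamma=\emptyset$ makes the two boundary conditions in each hat pair agree on $\Gamma$, so that the coupled configurations under $\pi_S$ are pathwise identical.

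First I would identify $\mathcal C_*^{\Lambda_N}\cap\Gamma$ with the set of Type-f vertices of Table~\ref{tab:1}: by \eqref{eq-quenched-realization} the spins on $\Gamma$ are precisely $\tau^\pm,\tilde\tau^\pm$, so $v\in\mathcal C_*^{\Lambda_N}\cap\Gamma$ iff $(\tau^+_v,\tau^-_v,\tilde\tau^+_v,\tilde\tau^-_v)=(1,-1,1,-1)$. Using $\hat\tau^+\geq\hat\tau^-$ and $\hat{\tilde\tau}^+\geq\hat{\tilde\tau}^-$ from \eqref{eq-hat-spin-monotonicity}, I get $\{v\in\Gamma:\hat\tau^+_v\neq\hat\tau^-_v\}=\{v\in\Gamma:\hat\tau^+_v=1,\hat\tau^-_v=-1\}$ and similarly for the tilde version; and by \eqref{eq-hat-zero} both of these sets coincide with $\mathcal C_*^{\Lambda_N}\cap\Gamma$. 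Hence the hypothesis forces $\hat\tau^+\equiv\hat\tau^-$ and $\hat{\tilde\tau}^+\equiv\hat{\tilde\tau}^-$ on all of $\Gamma$.

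Next I would argue that, under $\pi_S$, the configurations $\sigma^{S,\hat\tau^+}$ and $\sigma^{S,\hat\tau^-}$ coincide, and likewise $\tilde\sigma^{S,\hat{\tilde\tau}^+}=\tilde\sigma^{S,\hat{\tilde\tau}^-}$. Indeed, with $\hat\tau^+=\hat\tau^-$ on $\partial S=\Gamma$ and the common external field $\{h_v\}$, the measures $\mu^{S,\hat\tau^+}$ and $\mu^{S,\hat\tau^-}$ are literally the same Ising measure; and in the adaptive admissible coupling of Definition~\ref{def-adaptive-admissible-coupling} the monotone coupling of equal one-dimensional marginals is the diagonal coupling, while the conditional law used at each exploration step for a given measure depends only on that measure and on its own already-sampled spins. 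So an induction along the exploration order shows the two configurations never separate. Therefore $\mathcal C^{S,\hat\tau^\pm}=\emptyset$ and $\tilde{\mathcal C}^{S,\hat{\tilde\tau}^\pm}=\emptyset$, whence $\mathcal C_*^{S,\hat\tau^\pm,\hat{\tilde\tau}^\pm}=\emptyset$; by Lemma~\ref{lem-hat-version-good}, $\mathcal C_*^{S,\tau^\pm,\tilde\tau^\pm}\subset\mathcal C_*^{S,\hat\tau^\pm,\hat{\tilde\tau}^\pm}=\emptyset$, so in particular $o\notin\mathcal C_*^{S,\tau^\pm,\tilde\tau^\pm}$ (in fact the whole set is empty, which also trivially covers the case $o\notin S$).

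The argument is short; the only point needing a little care is the induction in the second step, namely that the exploration maps $f_V$ being shared by all eight coupled measures does not spoil the pathwise equality of two configurations with identical marginals — which holds because the step-$t$ conditional $\theta^{(t)}_i$ depends on the $i$-th configuration only through its values on the already-explored vertices, not on the other seven configurations or on which vertex $f_V$ selects.
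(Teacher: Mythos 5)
Your proof is correct and follows essentially the same route as the paper: use \eqref{eq-hat-zero} (equivalently, inspection of Tables~\ref{tab:1} and \ref{tab:2}) to see that $\mathcal C_*^{\Lambda_N}\cap\Gamma=\emptyset$ forces $\hat\tau^+=\hat\tau^-=\hat{\tilde\tau}^+=\hat{\tilde\tau}^-$, conclude $\mathcal C_*^{S,\hat\tau^\pm,\hat{\tilde\tau}^\pm}=\emptyset$, and finish with Lemma~\ref{lem-hat-version-good}. The only difference is that you spell out, via the induction along the exploration order, why identical boundary conditions and fields yield pathwise-identical configurations under $\pi_S$ — a step the paper leaves implicit — and your justification of it is sound.
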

\begin{proof}
If $\mathcal C_*^{\Lambda_N} \cap \Gamma = \emptyset$, we have $\hat \tau^+ = \hat \tau^- = \hat {\tilde \tau}^{+} = \hat {\tilde \tau}^-$, in which case we have $ \mathcal C_*^{S, \hat \tau^{\pm}, \hat {\tilde \tau}^{\pm}} = \emptyset$ and in particular $o\not\in \mathcal C_*^{S, \hat \tau^{\pm}, \hat {\tilde \tau}^{\pm}}$. Combined with Lemma~\ref{lem-hat-version-good}, this completes the proof of the corollary.
\end{proof}
\begin{lemma}\label{lem-free-energy-differences}
We have that
\begin{align}
2\Delta\langle\#(\mathcal C_*^{S, \hat \tau^{\pm}, \hat {\tilde \tau}^{\pm}}\cap (\Lambda_N\setminus \Lambda_{N/4}))\rangle_{\pi_S} & \leq  (\tilde F^{S,  \hat {\tilde \tau}^{+}} - \tilde F^{S, \hat {\tilde \tau}^{-}}) - (F^{S, \hat \tau^{+}} -  F^{S, \hat \tau^{-}})  \label{eq-free-energy-lower}\\
&\leq 16\#\{v\in \Gamma: \hat \tau^+_v=  \hat {\tilde \tau}^+_v = 1, \hat \tau^-_v =\hat {\tilde \tau}^-_v = -1\}\,. \label{eq-free-energy-upper}
 \end{align}
 \end{lemma}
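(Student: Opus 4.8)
The plan is to follow the same three-step template as in Lemma~\ref{lem-free-energy-differences-2}, adapted to the perturbation $\tilde h^{(N)}$ of \eqref{eq-def-tilde-h-again} which differs from $h$ only on $\Lambda_N\setminus\Lambda_{N/4}$, and to the boundary conditions $\hat\tau^\pm$, $\hat{\tilde\tau}^\pm$ rather than $\tau^\pm,\tilde\tau^\pm$. Throughout we work on $S = V^c$ with $\Gamma=\partial S$, quenched on the realization of the stopping set $\mathcal V=V$ and on the boundary spins as in \eqref{eq-quenched-realization}, and under the adaptive admissible coupling $\pi_S$.

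First I would prove the upper bound \eqref{eq-free-energy-upper}. Since $\hat\tau^+$ and $\hat{\tilde\tau}^+$ agree (they equal $\hat\tau^+=\hat{\tilde\tau}^+$ by Table~\ref{tab:2}) and likewise $\hat\tau^-=\hat{\tilde\tau}^-=\tilde\tau^-$, the difference $(\tilde F^{S,\hat{\tilde\tau}^+}-\tilde F^{S,\hat{\tilde\tau}^-}) - (F^{S,\hat\tau^+}-F^{S,\hat\tau^-})$ compares log-partition-functions where, for a fixed external field, only the boundary condition changes from $\hat\tau^-$ to $\hat\tau^+$. As in Step~1 of Lemma~\ref{lem-free-energy-differences-2}, flipping a single boundary spin in $\Gamma$ changes the Hamiltonian of any configuration by at most $8$ (each boundary vertex has at most $4$ neighbors in $S$, and each such edge contributes $\pm2$), so $F^{S,\hat\tau^+}-F^{S,\hat\tau^-}\le 8\,\#\{v\in\Gamma:\hat\tau^+_v\neq\hat\tau^-_v\}$ and symmetrically the $\tilde F$-difference lies in $[-8\,\#\{\cdots\}, 8\,\#\{\cdots\}]$ with the same index set, because $\hat\tau^\pm$ and $\hat{\tilde\tau}^\pm$ have the same disagreement set. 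By \eqref{eq-hat-zero} this disagreement set is exactly $\{v\in\Gamma:\hat\tau^+_v=\hat{\tilde\tau}^+_v=1,\ \hat\tau^-_v=\hat{\tilde\tau}^-_v=-1\}$, so the two bounds combine to give the factor $16$.

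Next, the lower bound \eqref{eq-free-energy-lower}. Write the difference as $(\tilde F^{S,\hat{\tilde\tau}^+}-F^{S,\hat\tau^+}) - (\tilde F^{S,\hat{\tilde\tau}^-}-F^{S,\hat\tau^-})$, and interpolate: introduce $h^{(s)} = h + s(\tilde h^{(N)}-h)$ for $s\in[0,1]$, which increases $h$ by $s\Delta$ on $\Lambda_N\setminus\Lambda_{N/4}$ and leaves it unchanged on $\Lambda_{N/4}$. Then $\frac{d}{ds}F^{S,\hat\tau^\pm,(s)} = \sum_{v\in S\cap(\Lambda_N\setminus\Lambda_{N/4})}\Delta\,\langle\sigma^{S,\hat\tau^\pm,(s)}_v\rangle$, and similarly for $\hat{\tilde\tau}^\pm$ (at $s=1$). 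Integrating, the difference equals $\Delta\int_0^1\sum_{v\in S\cap(\Lambda_N\setminus\Lambda_{N/4})}(\langle\sigma_v\rangle_{+}-\langle\sigma_v\rangle_{-})\,ds$ where $+$ refers to the $\hat{\tilde\tau}^+$-measure (resp.\ $\hat\tau^+$) and $-$ to $\hat\tau^-$ (resp.\ $\hat{\tilde\tau}^-$); by the monotonicity chain $\tilde\sigma^{S,\hat{\tilde\tau}^+}\ge\sigma^{S,\hat\tau^+}\ge\sigma^{S,\hat\tau^-}$ and $\tilde\sigma^{S,\hat{\tilde\tau}^-}=\sigma^{S,\hat\tau^-}$ from \eqref{eq-hat-spin-monotonicity} (together with FKG), each summand is $\ge 2$ on $\mathcal C_*^{S,\hat\tau^\pm,\hat{\tilde\tau}^\pm}$ and $\ge 0$ elsewhere. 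Here is where the hat version pays off: Lemma~\ref{lem-hat-version-good} guarantees this disagreement set dominates the one we care about, and — since $\hat\tau^\pm,\hat{\tilde\tau}^\pm$ and hence all four measures are coupled through $\pi_S$, which by \eqref{eq-theta-consistent} and Lemma~\ref{lem-adaptive-admissible-coupling} has the correct Ising marginals — one actually gets the cleaner intermediate bound $(\tilde F^{S,\hat{\tilde\tau}^+}-\tilde F^{S,\hat{\tilde\tau}^-})-(F^{S,\hat\tau^+}-F^{S,\hat\tau^-})\ge 2\Delta\,\langle\#(\mathcal C_*^{S,\hat\tau^\pm,\hat{\tilde\tau}^\pm}\cap(\Lambda_N\setminus\Lambda_{N/4}))\rangle_{\pi_S}$ by taking expectations of $\sum_v(\tilde\sigma^{S,\hat{\tilde\tau}^+}_v - \sigma^{S,\hat\tau^-}_v)$ restricted to $\Lambda_N\setminus\Lambda_{N/4}$ and noting the contribution at each disagreement vertex is exactly $2$.

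The main obstacle I expect is bookkeeping rather than conceptual: one must be careful that the four Ising measures appearing (two fields $\times$ two boundary conditions) are genuinely coupled together — this is exactly what $\pi_S$, the extension of $\pi'_S$ described before the lemma, provides — so that the chain of inequalities $\tilde\sigma^{S,\hat{\tilde\tau}^+}\ge\sigma^{S,\hat\tau^+}\ge\sigma^{S,\hat\tau^-}=\tilde\sigma^{S,\hat{\tilde\tau}^-}$ holds pointwise and termwise in the derivative of the log-partition-function. The subtlety is that the derivative $\frac{d}{ds}F$ only sees $v$ where the field is actually being perturbed, i.e.\ $v\in S\cap(\Lambda_N\setminus\Lambda_{N/4})$, which is why the restriction to $\Lambda_N\setminus\Lambda_{N/4}$ appears on the left-hand side of \eqref{eq-free-energy-lower}; one should double-check that vertices in $\mathcal C_*^{S,\hat\tau^\pm,\hat{\tilde\tau}^\pm}\cap\Lambda_{N/4}$ are correctly excluded and contribute nothing to the derivative. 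Modulo this care, the proof is a routine adaptation of Lemma~\ref{lem-free-energy-differences-2}.
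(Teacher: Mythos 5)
Your proposal is correct and follows the paper's proof essentially step for step: the upper bound \eqref{eq-free-energy-upper} by comparing Hamiltonians whose boundary conditions differ only on $\{v\in\Gamma:\hat\tau^+_v\neq\hat\tau^-_v\}$ (each flipped boundary spin shifting any configuration's energy by at most $8$) together with \eqref{eq-hat-zero}, and the lower bound \eqref{eq-free-energy-lower} by regrouping as $(\tilde F^{S,\hat{\tilde\tau}^+}-F^{S,\hat\tau^+})-(\tilde F^{S,\hat{\tilde\tau}^-}-F^{S,\hat\tau^-})$, interpolating the field on $\Lambda_N\setminus\Lambda_{N/4}$ so the derivative of the log-partition-function only sees that region, and using the admissible coupling to sandwich $\sigma_v^{S,\hat\tau^\pm,t}$ between the $t=0$ and $t=1$ configurations so that every vertex of $\mathcal C_*^{S,\hat\tau^\pm,\hat{\tilde\tau}^\pm}$ disagrees (with gap exactly $2$) at every interpolation time. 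One phrase should be dropped, though: lower-bounding the integrand by the expectation of $\sum_v(\tilde\sigma^{S,\hat{\tilde\tau}^+}_v-\sigma^{S,\hat\tau^-}_v)$ goes the wrong way, since the sandwiching gives $\sigma_v^{S,\hat\tau^+,t}-\sigma_v^{S,\hat\tau^-,t}\leq\tilde\sigma_v^{S,\hat{\tilde\tau}^+}-\sigma_v^{S,\hat\tau^-}$ (an upper bound on the integrand) --- but this remark is superfluous because your preceding observation (the summand is $\geq 2$ on the event $v\in\mathcal C_*^{S,\hat\tau^\pm,\hat{\tilde\tau}^\pm}$ and $\geq 0$ otherwise, pointwise under $\pi_{S,t}$, whose projection onto the unperturbed spins is $\pi_S$ by \eqref{eq-theta-consistent}) already yields the claim and is exactly the paper's argument.
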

 \begin{proof}
 The proof of the lemma shares some similarity to that of Lemma~\ref{lem-free-energy-differences-2}. However, we give a self-contained proof here in order for clarity of exposition.

 We first prove \eqref{eq-free-energy-upper}. A straightforward computation gives that
 \begin{align*}
\tilde F^{S,  \hat {\tilde \tau}^{+}} - \tilde F^{S, \hat {\tilde \tau}^{-}}
  = \frac{1}{\beta}\log \frac{\sum_{\sigma}e^{-\beta  \tilde H^{S, \hat {\tilde \tau}^{+}}(\sigma)}}{\sum_{\sigma}e^{-\beta \tilde H^{S, \hat {\tilde \tau}^{-}}(\sigma) }}
\leq\frac{1}{\beta}\log e^{8\beta \cdot \#\{v\in \Gamma: \hat {\tilde \tau}^+_v \neq \hat {\tilde \tau}^{-}_v\}} \leq 8\cdot \#\{v\in \Gamma: \hat {\tilde \tau}^+_v \neq \hat {\tilde \tau}^{-}_v\}\,.
 \end{align*}
 Similarly, $F^{S,  \hat { \tau}^{+}} - F^{S, \hat {\tau}^{-}}\geq - 8\cdot \#\{v\in \Gamma: \hat { \tau}^+_v \neq \hat {\tau}^{-}_v\}$. Combined with \eqref{eq-hat-zero}, this proves \eqref{eq-free-energy-upper}.

 Now we turn to prove \eqref{eq-free-energy-lower}. We write
 \begin{equation}\label{eq-int}
(\tilde F^{S,  \hat {\tilde \tau}^{+}} - \tilde F^{S, \hat {\tilde \tau}^{-}}) - (F^{S, \hat \tau^{+}} -  F^{S, \hat \tau^{-}})= (\tilde F^{S,  \hat {\tilde \tau}^{+}} -  F^{S, \hat \tau^{+}} ) - (\tilde F^{S, \hat {\tilde \tau}^{-}} -  F^{S, \hat \tau^{-}}).
 \end{equation}
 For $0\leq t\leq 1$, define
 \begin{equation}\label{eq-h-t-4}
 \tilde h^{(t)}_v = \begin{cases}
h_v + t \Delta, &\mbox{ for } v\in \Lambda_N \setminus \Lambda_{N/4}\,,\\
h_v, &\mbox{ for } v\in \Lambda_{N/4}\,.
\end{cases}
\end{equation}
Let $F^{S,  \hat  \tau^{+}, t}$ be the log-partition-function on $S$ with boundary condition  $\hat  \tau^{+}$ (note that $\hat {\tilde \tau}^{+} = \hat \tau^+$ by \eqref{eq-hat-spin-monotonicity}) and external field $\{\tilde h^{(t)}_v\}$. In particular, $F^{S,  \hat \tau^{+},0} = F^{S, \hat \tau^{+}}$ and $F^{S,  \hat  \tau^{+},1}= \tilde F^{S, \hat {\tilde \tau}^{+}}$. Similar notations apply for $F^{S,  \hat  \tau^{-},t}$.
Thus, we get that
 \begin{equation}\label{eq-part}
\tilde F^{S,  \hat {\tilde \tau}^{+}} -  F^{S, \hat \tau^{+}}  =\int_0^1 \frac{d F^{S,  \hat  \tau^{+}, t}}{dt} dt,\quad \quad
 \tilde F^{S, \hat {\tilde \tau}^{-}} -  F^{S, \hat \tau^{-}}=\int_0^1 \frac{d F^{S,  \hat  \tau^{-}, t}}{dt}  dt\,.
 \end{equation}
 Denote by $\sigma^{S, \hat \tau^\pm, t}$ spins sampled according to Ising measures with boundary conditions $\hat \tau^{\pm}$ and external field  $\{\tilde h^{(t)}\}$.  In addition, for any fixed $t$, we let $\pi_{S, t}$ be the admissible coupling extended from $\pi_S$ by also incorporating the spins $\sigma^{S, \hat \tau^{\pm}, t}$ (again, the order of sampling vertex is given by that of $\pi_S$). Therefore, we see
 $$\frac{d F^{S,  \hat \tau^{+}, t}}{dt}  = \Delta \sum_{v \in S\cap (\Lambda_N \setminus \Lambda_{N/4})}\langle \sigma_v^{S, \hat \tau^+, t}\rangle_{\pi_{S, t}} \mbox{ and } \frac{d F^{S,  \hat  \tau^{-}, t}}{dt}  = \Delta \sum_{v \in S\cap (\Lambda_N \setminus \Lambda_{N/4})}\langle \sigma_v^{S, \hat \tau^-, t}\rangle_{\pi_{S, t}}\,.$$
Combined with \eqref{eq-part} and \eqref{eq-int}, it yields that
\begin{equation}\label{eq-C-*-t}
(F^{S,  \hat {\tilde \tau}^{+}} - F^{S, \hat {\tilde \tau}^{-}}) - (F^{S, \hat \tau^{+}} -  F^{S, \hat \tau^{-}}) =2 \int_0^1 \Delta \langle \#\{v\in S\cap (\Lambda_N \setminus \Lambda_{N/4}):  \sigma_v^{S, \hat \tau^+, t} \neq \sigma_v^{S, \hat \tau^-, t}\} \rangle_{\pi_{S, t}} dt\,.
\end{equation}
For any $v\in S$ and $t\in(0,1)$, by admissible coupling we have $ \sigma_v^{S, \hat \tau^+}\leq \sigma_v^{S, \hat \tau^+, t}\leq \tilde \sigma_v^{S, \hat {\tilde \tau}^+}$ and $\sigma_v^{S, \hat \tau^-}\leq \sigma_v^{S, \hat \tau^-, t}\leq \tilde \sigma_v^{S, \hat {\tilde \tau}^-}$.
Therefore, $\{v\in S\cap (\Lambda_N \setminus \Lambda_{N/4}):  \sigma_v^{S, \hat \tau^+, t} \neq \sigma_v^{S, \hat \tau^-, t}\} \supset \mathcal C_*^{S, \hat\tau^{\pm}, \hat {\tilde \tau}^{\pm}} \cap (\Lambda_N \setminus \Lambda_{N/4})$. Combined with \eqref{eq-C-*-t}, this completes the proof of \eqref{eq-free-energy-lower}.
 \end{proof}

\begin{cor}\label{cor-C-*-bound}
Conditioned on the realization of the stopping set $\mathcal V = V$, let $S = V^c$ and $\Gamma = \partial S$. Then we have
$$\Delta\langle\#(\mathcal C_*^{\Lambda_N} \cap S \cap (\Lambda_N \setminus \Lambda_{N/4})) \mid (\sigma^{\Lambda_N, \pm}, \tilde \sigma^{\Lambda_N, \pm})_{V}\rangle_{\pi_{\Lambda_N}} \leq 8 \#\{\Gamma \cap \mathcal C_*^{\Lambda_N}\}\,.$$
\end{cor}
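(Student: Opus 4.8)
The plan is to derive Corollary~\ref{cor-C-*-bound} by chaining the three lemmas of this subsection after conditioning on the stopping set. First I would fix a realization $\mathcal V = V$ of the stopping set together with the spins $(\sigma^{\Lambda_N,\pm},\tilde\sigma^{\Lambda_N,\pm})_V$; since $\Gamma=\partial S\subset V$, the quantity $\#\{\Gamma\cap\mathcal C_*^{\Lambda_N}\}$ on the right-hand side is a deterministic function of this conditioning, and the boundary configurations $\tau^\pm,\tilde\tau^\pm$ of \eqref{eq-quenched-realization} are fixed. By Lemma~\ref{lem-adaptive-admissible-coupling} together with the discussion preceding \eqref{eq-quenched-realization}, the conditional law of $\pi_{\Lambda_N}$ restricted to $S=V^c$ is exactly $\pi'_S$, the adaptive admissible coupling of $\mu^{S,\tau^\pm}$ and $\tilde\mu^{S,\tilde\tau^\pm}$. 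Consequently, for each $v\in S$ the event $\{v\in\mathcal C_*^{\Lambda_N}\}$ agrees under this conditional law with $\{v\in\mathcal C_*^{S,\tau^\pm,\tilde\tau^\pm}\}$, and passing to the extension $\pi_S$ (which carries the hat-version spins) changes nothing by the consistency property \eqref{eq-theta-consistent}. Hence
\[
\langle\#(\mathcal C_*^{\Lambda_N}\cap S\cap(\Lambda_N\setminus\Lambda_{N/4}))\mid(\sigma^{\Lambda_N,\pm},\tilde\sigma^{\Lambda_N,\pm})_V\rangle_{\pi_{\Lambda_N}}=\langle\#(\mathcal C_*^{S,\tau^\pm,\tilde\tau^\pm}\cap(\Lambda_N\setminus\Lambda_{N/4}))\rangle_{\pi_S}\,.
\]

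Next I would invoke the pointwise containment $\mathcal C_*^{S,\tau^\pm,\tilde\tau^\pm}\subset\mathcal C_*^{S,\hat\tau^\pm,\hat{\tilde\tau}^\pm}$ from Lemma~\ref{lem-hat-version-good} to bound the right-hand side by $\langle\#(\mathcal C_*^{S,\hat\tau^\pm,\hat{\tilde\tau}^\pm}\cap(\Lambda_N\setminus\Lambda_{N/4}))\rangle_{\pi_S}$, and then apply Lemma~\ref{lem-free-energy-differences}: chaining \eqref{eq-free-energy-lower} and \eqref{eq-free-energy-upper} gives
\[
2\Delta\,\langle\#(\mathcal C_*^{S,\hat\tau^\pm,\hat{\tilde\tau}^\pm}\cap(\Lambda_N\setminus\Lambda_{N/4}))\rangle_{\pi_S}\leq 16\,\#\{v\in\Gamma:\hat\tau^+_v=\hat{\tilde\tau}^+_v=1,\ \hat\tau^-_v=\hat{\tilde\tau}^-_v=-1\}\,.
\]
Finally, the identity \eqref{eq-hat-zero} identifies the set on the right with $\{v\in\Gamma:\tau^+_v=\tilde\tau^+_v=1,\ \tau^-_v=\tilde\tau^-_v=-1\}=\Gamma\cap\mathcal C_*^{\Lambda_N}$. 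Combining the two displays and dividing by $2$ yields the asserted bound $\Delta\,\langle\#(\mathcal C_*^{\Lambda_N}\cap S\cap(\Lambda_N\setminus\Lambda_{N/4}))\mid\cdot\rangle_{\pi_{\Lambda_N}}\leq 8\,\#\{\Gamma\cap\mathcal C_*^{\Lambda_N}\}$.

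I do not anticipate a real obstacle: the corollary is bookkeeping assembled from Lemmas~\ref{lem-adaptive-admissible-coupling}, \ref{lem-hat-version-good} and \ref{lem-free-energy-differences}. The only step that deserves a sentence of care is the first one --- verifying that conditioning the adaptive admissible coupling on the stopping-set data genuinely reproduces $\pi'_S$ with quenched boundary conditions $\tau^\pm,\tilde\tau^\pm$ on $\Gamma$, so that disagreement inside $S$ is truly governed by $\mathcal C_*^{S,\tau^\pm,\tilde\tau^\pm}$; this is exactly what Lemma~\ref{lem-adaptive-admissible-coupling} was designed to supply, and the remaining inequalities are then immediate.
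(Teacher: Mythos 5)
Your proposal is correct and follows essentially the same route as the paper's proof: quench on the boundary data on $\Gamma$, identify the conditional law of the disagreements in $S$ with $\mathcal C_*^{S,\tau^\pm,\tilde\tau^\pm}$ under $\pi_S$ via \eqref{eq-put-admissible-coupling-together}, pass to the hat version by Lemma~\ref{lem-hat-version-good}, chain \eqref{eq-free-energy-lower} and \eqref{eq-free-energy-upper} from Lemma~\ref{lem-free-energy-differences}, and conclude with \eqref{eq-hat-zero}. The only difference is that you spell out the conditioning/restriction step in more detail than the paper, which simply cites \eqref{eq-put-admissible-coupling-together}.
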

\begin{proof}
Quench on the realization of $(\sigma^{\Lambda_N, \pm}, \tilde \sigma^{\Lambda_N, \pm})_\Gamma$ as in \eqref{eq-quenched-realization}. By
Lemmas~\ref{lem-hat-version-good} and \ref{lem-free-energy-differences},
\begin{align*}
\Delta\langle\#(\mathcal C_*^{S, \tau^{\pm}, \tilde \tau^{\pm}}\cap (\Lambda_N\setminus \Lambda_{N/4}) )\rangle_{\pi_S} &\leq 8\#\{v\in \Gamma: \hat \tau^+_v=  \hat {\tilde \tau}^+_v = 1, \hat \tau^-_v =\hat {\tilde \tau}^-_v = -1\}\\
& = 8\#\{v\in \Gamma: \tau^+_v=\tilde \tau^+_v=1,  \tau^-_v= \tilde \tau^-_v = -1\}\,,
\end{align*}
where the equality follows from \eqref{eq-hat-zero}.
Combined with \eqref{eq-put-admissible-coupling-together}, this completes  the proof of the corollary.
\end{proof}

\subsubsection{Analysis of the adaptive admissible coupling}\label{sec-analysis-coupling}

We now analyze the adaptive admissible coupling $\pi_{\Lambda_N}$.
Recall that $\ell = \lfloor \frac{1}{4} N^{1-\alpha'}\rfloor$ and
 $K =\lfloor N^{\alpha \alpha'} \rfloor$, and define $\mathcal D_N$ to be the event (measurable with respect to the Gaussian field) by
\begin{equation}\label{eq-def-E}
\mathcal D_N = \{\pi_{\Lambda_N} (\min_{1\leq j\leq \ell} d_{\mathcal C^{\Lambda_N}} (\partial \Lambda_{N/2 -  j N^{\alpha'}}, \partial \Lambda_{N/2 - (j-1) N^{\alpha'}}) \leq K) \geq N^{-20}\}\,.
\end{equation}
By Proposition~\ref{prop-crossing-dimension} and a simple Markov's inequality, we see that  for $C=C(\epsilon, \beta)>0$
\begin{equation}\label{eq-prob-D-N}
\P(\mathcal D_N) \leq C N^{-20}\,.
\end{equation}
 In what follows, we quench on the Gaussian field at which $\mathcal D_N$ does not occur.

\begin{lemma}\label{lem-bound-m}
We have that $\pi_{\Lambda_N}(o\in \mathcal C_*^{\Lambda_N}) \leq C N^{-10}$ on $\mathcal D_N^c$, for $C = C(\epsilon, \beta)>0$.
\end{lemma}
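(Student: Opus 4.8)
## Proof plan for Lemma~\ref{lem-bound-m}

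The plan is to run a multi-scale induction over the $\ell$ phases of the exploration procedure $\pi_{\Lambda_N}$, using Corollary~\ref{cor-C-*-bound} at each phase to contract the expected number of disagreements as we move from one annulus to the next smaller one, and then chain the contractions together. Concretely, for $1\le j\le \ell$ let $N_j' = N/2-(j-1)N^{\alpha'}$ and set $\mathfrak{A}_j = \Lambda_{N_j'}\setminus\Lambda_{N_j' - N^{\alpha'}}$ (the annulus explored in Phase $j$), and let $m_j = \pi_{\Lambda_N}(\#(\mathcal{C}_*^{\Lambda_N}\cap \partial\Lambda_{N_j' - N^{\alpha'}}))$, the expected number of disagreements reaching the inner boundary of Phase $j$. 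The key geometric point, already built into the construction in Section~\ref{sec-contruction-adaptive-coupling}, is that at the end of Stage $k$ of Phase $j$, the set $A_{j,k}$ records exactly the vertices at $d_{\mathcal{C}_*^{\Lambda_N}}$-distance $k$ from $\partial\Lambda_{N_j'}$, and on $\mathcal{D}_N^c$ (which rules out short intrinsic distances across each annulus up to an event of $\pi_{\Lambda_N}$-probability $N^{-20}$), either $A_{j,K}=\emptyset$ or some $A_{j,k}$ with $k\le K$ is ``thin.'' I would define the stopping set $\mathcal{V}_j$ to be $V_{j,k_*}$ where $k_* = k_*(j)$ is the first stage at which $|A_{j,k}| \le K^{-1}\#(\mathcal{C}_*^{\Lambda_N}\cap \mathfrak{A}_j)$; this is a stopping set in the sense of Definition~\ref{def-adaptive-admissible-coupling} because the event $\{\mathcal{V}_j = V_{j,k_*}\}$ is determined by the spins on $V_{j,k_*}$.

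With $\mathcal{V}_j$ in hand, Corollary~\ref{cor-C-*-bound} applied with $\mathcal{V}=\mathcal{V}_j$ and $S=\mathcal{V}_j^c\supset\Lambda_{N_j'-N^{\alpha'}}$ gives, after taking expectations over the earlier spins,
\[
\Delta\,\pi_{\Lambda_N}\!\big(\#(\mathcal{C}_*^{\Lambda_N}\cap\Lambda_{N_j'-N^{\alpha'}})\big) \le 8\,\pi_{\Lambda_N}\!\big(\#(\Gamma_j\cap\mathcal{C}_*^{\Lambda_N})\big)
\le 8 K^{-1}\,\pi_{\Lambda_N}\!\big(\#(\mathcal{C}_*^{\Lambda_N}\cap\mathfrak{A}_j)\big),
\]
where $\Gamma_j=\partial\mathcal{V}_j^c$ and the last step uses the minimality of $k_*$ together with the fact that $\Gamma_j\subset A_{j,k_*}\subset\mathcal{C}_*^{\Lambda_N}\cap\mathfrak{A}_j$ (plus a small additive term of order $N^{-20}\cdot N^2$ coming from the event $\mathcal{D}_N^c$ failing inside the coupling, which is negligible). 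Since $\Delta = N^{-\alpha(\alpha')^2}$ and $K=\lfloor N^{\alpha\alpha'}\rfloor$, the prefactor $8/(K\Delta)$ is $\le N^{-\alpha\alpha' + \alpha(\alpha')^2 + o(1)} = N^{-\alpha\alpha'(1-\alpha') + o(1)}$, which is $\ll 1$ for $N\ge N_0$ large; call this contraction factor $\rho = \rho(N)$. Because $\mathcal{C}_*^{\Lambda_N}\cap\mathfrak{A}_j$ is contained in the ``input'' boundary layer $\partial\Lambda_{N_j'}$ thickened by the exploration, monotonicity of the coupling and a union bound over the $O(N)$ vertices of that boundary layer let me bound $\pi_{\Lambda_N}(\#(\mathcal{C}_*^{\Lambda_N}\cap\mathfrak{A}_j))$ in terms of $m_{j-1}$ (with $m_0 = O(N)$ trivially), giving a recursion $m_j \le \rho\cdot(\text{poly}(N))\cdot m_{j-1}$ — here I must be slightly careful that the polynomial loss per scale is beaten by $\rho$, which holds since $\rho = N^{-c}$ for a fixed $c>0$ while the loss is only $N^{o(1)}$ per scale, or alternatively I group a bounded number of scales together. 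Iterating over all $\ell = \lfloor\frac14 N^{1-\alpha'}\rfloor$ phases, $m_\ell \le (\rho\,\mathrm{poly}(N))^{\ell} m_0$, which is super-polynomially small, in particular $\le C N^{-10}$ for $N\ge N_0$. Finally, by Corollary~\ref{cor-C*-percolates}, if after all $\ell$ phases the procedure has stopped because some $A_{j,K}=\emptyset$ before reaching $\Lambda_{N/4}$, or if $\mathcal{C}_*^{\Lambda_N}$ fails to connect $o$ to the outermost explored boundary, then $o\notin\mathcal{C}_*^{\Lambda_N}$; otherwise $o\in\mathcal{C}_*^{\Lambda_N}$ forces a chain of disagreements surviving all $\ell$ annuli, whose probability is at most $m_\ell \le CN^{-10}$.

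The main obstacle I anticipate is the bookkeeping around the stopping set and the events $\mathcal{E}_{j,k,\emptyset}$, $\mathcal{E}_{j,k,d}$: I need $\mathcal{V}_j$ to genuinely be a stopping set so that Lemma~\ref{lem-adaptive-admissible-coupling} applies and the conditional marginals on $\mathcal{V}_j^c$ are honest Ising measures, and I need to handle cleanly the case where Phase $j$ terminates early via $\mathcal{E}_{j,k,d}$ (intrinsic distance $\le K$), which is exactly the event excluded — up to $\pi_{\Lambda_N}$-probability $N^{-20}$ — on $\mathcal{D}_N^c$. A second delicate point is that the estimate from Corollary~\ref{cor-C-*-bound} controls an \emph{expectation under $\pi_{\Lambda_N}$} conditioned on the realization of $\mathcal{V}_j$, so chaining the $\ell$ bounds requires iterated conditioning (tower property) and care that the constant $8/(K\Delta)$ — which is uniform — is what propagates, rather than something that degrades multiplicatively with $j$; the cleanest route is to prove $m_j \le \tfrac12 m_{j-1}$ directly for $N\ge N_0$ (absorbing all polynomial losses into the single gain $N^{-c}$ per scale) and then just say $m_\ell \le 2^{-\ell} m_0 \le N^2\cdot 2^{-\ell} \le CN^{-10}$.
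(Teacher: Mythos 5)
Your high-level strategy is the paper's: contract the expected number of disagreements from one annulus of the exploration to the next via Corollary~\ref{cor-C-*-bound}, iterate over the $\ell$ phases, and conclude with Corollary~\ref{cor-C*-percolates}. However, there is a genuine gap in your key step. You define $\mathcal V_j = V_{j,k_*}$ where $k_*$ is the first stage with $\#A_{j,k}\le K^{-1}\#(\mathcal C_*^{\Lambda_N}\cap\mathfrak A_j)$. This is \emph{not} a stopping set: the threshold involves $\#(\mathcal C_*^{\Lambda_N}\cap\mathfrak A_j)$, the total number of disagreements in the whole annulus, which is only revealed after the entire annulus has been explored, i.e.\ after stage $k_*$. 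So $\{\mathcal V_j=V_{j,k_*}\}$ is not measurable with respect to the spins on $V_{j,k_*}$, Lemma~\ref{lem-adaptive-admissible-coupling} does not apply, and the conditional marginals on $\mathcal V_j^c$ need not be Ising measures --- which is exactly the hypothesis Corollary~\ref{cor-C-*-bound} needs. (The analogous pigeonhole in the zero-temperature Lemma~\ref{lem-zero-perturbation} is legitimate because there the argument is deterministic given the field; at positive temperature the measurability constraint is the whole point of the stopping-set machinery.) The paper's fix is to apply Corollary~\ref{cor-C-*-bound} at \emph{every} stage $V_{j,k}$, $k=1,\dots,K$ --- each of which is an honest stopping set --- and sum the resulting inequalities over $k$, using $\sum_{k=1}^K\#A_{j,k}\le\#(\mathcal C_*^{\Lambda_N}\cap\mathfrak A_j)$ since the layers are disjoint. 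This yields the $8/(K\Delta)\le 10^{-3}$ contraction in expectation without ever selecting a layer adaptively.

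A second, related problem is your recursion variable. You set $m_j$ to be the expected number of disagreements on the single boundary circle $\partial\Lambda_{N_j'-N^{\alpha'}}$, and then need to bound $\pi_{\Lambda_N}(\#(\mathcal C_*^{\Lambda_N}\cap\mathfrak A_j))$ (an annulus of $\asymp N\cdot N^{\alpha'}$ sites) by $m_{j-1}$ (a circle of $\asymp N$ sites). Monotonicity and a union bound do not give this; the honest loss is a factor of order $N^{\alpha'}$ (the annulus thickness), not $N^{o(1)}$, and since the gain per scale is only $8/(K\Delta)=N^{-\alpha\alpha'(1-\alpha')+o(1)}$ with $\alpha\alpha'(1-\alpha')<\alpha'$ for the admissible parameter range, the recursion as you set it up does not close. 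The paper avoids this entirely by running the recursion between \emph{annulus} counts $m^*_j$ directly: the left-hand side of the summed corollary already controls all unexplored disagreements in $\Lambda_N\setminus\Lambda_{N/4}$, hence in particular those in the next annulus, and the right-hand side is the count in the current annulus. Your handling of the events $\mathcal E_{j,k,d}$ via the additive $N^{-20}\cdot N^2$ term and the final appeal to Corollary~\ref{cor-C*-percolates} are fine.
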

\begin{proof}
 For $1\leq j\leq \ell$, $1\leq k\leq K$, let $\mathcal E_{j, k, \emptyset}, \mathcal E_{j, k, d}, V_{j, k}, A_{j, k}$ be defined as in Section~\ref{sec-contruction-adaptive-coupling}.
 For each $1\leq j\leq \ell$, let $\mathcal E_{j, \emptyset} = \cup_{i=1}^{j}\cup_{k=1}^K \mathcal E_{i, k, \emptyset}$ and define
$$m^*_j = \langle \#( \mathcal C_*^{\Lambda_N} \cap (\Lambda_{N/2 - (j-1) N^{\alpha'}} \setminus \Lambda_{N/2 -  j N^{\alpha'}})) \one_{\mathcal E_{j-1, \emptyset}^c} \rangle_{\pi_{\Lambda_N}}\,.$$
By Corollary~\ref{cor-C*-percolates}, it suffices to prove that $m^*_\ell \leq 2 N^{-10}$. To this end, it suffices to prove that for $N\geq N_0 = N_0(\epsilon, \beta)$ (where $N_0$ is to be selected)
\begin{equation}\label{eq-m-j-contraction}
m^*_{j+1} \leq 10^{-3} m^*_j + N^{-10} \mbox{ for all } 1\leq j\leq \ell-1\,.
\end{equation}
Let $\mathcal E_{j, d} = \cup_{i=1}^{j}\cup_{k=1}^K \mathcal E_{i, k, d}$. Since $\pi_{\Lambda_N}(\mathcal E_{j, d}) \leq C N^{-20}$ on $\mathcal D_N^c$,  it suffices to show that
\begin{equation}\label{eq-m-j-to-prove}
\langle \#( \mathcal C_*^{\Lambda_N} \cap (\Lambda_{N/2 - j N^{\alpha'}} \setminus \Lambda_{N/2 -  (j+1) N^{\alpha'}})) \one_{\mathcal E_{j, \emptyset}^c} \one_{{\mathcal E}_{j, d}^c}\rangle_{\pi_{\Lambda_N}} \leq 10^{-3} m^*_j\,.
\end{equation}
Fix $1\leq j\leq \ell$. For $1\leq k\leq K$, write $\mathcal E_{j, \leq k, \emptyset} = \mathcal E_{j-1, \emptyset} \cup\cup_{i=1}^k\mathcal E_{j, i, \emptyset}$ and $\mathcal E_{j, \leq k, d} = \mathcal E_{j-1, d} \cup\cup_{i=1}^k\mathcal E_{j, i, d}$. Thus, we can deduce that
\begin{align*}
&\Delta\langle\#(\mathcal C_*^{\Lambda_N} \cap (\Lambda_{N/2 - j N^{\alpha'}} \setminus \Lambda_{N/2 - (j+1) N^{\alpha'}}))\one_{\mathcal E_{j, \leq k, \emptyset}^c} \one_{{\mathcal E}_{j, \leq k, d}^c} \mid (\sigma^{\Lambda_N, \pm}, \tilde \sigma^{\Lambda_N, \pm})_{V_{j, k}}\rangle_{\pi_{\Lambda_N}}  \\
&=
\one_{\mathcal E_{j, \leq k, \emptyset}^c} \one_{{\mathcal E}_{j, \leq k, d}^c}\Delta\langle\#(\mathcal C_*^{\Lambda_N} \cap (\Lambda_{N/2 - j N^{\alpha'}} \setminus \Lambda_{N/2 - (j+1) N^{\alpha'}})) \mid (\sigma^{\Lambda_N, \pm}, \tilde \sigma^{\Lambda_N, \pm})_{V_{j, k}}\rangle_{\pi_{\Lambda_N}} \\
&\leq 8 \#A_{j, k} \cdot  \one_{\mathcal E_{j, \leq k, \emptyset}^c} \one_{{\mathcal E}_{j, \leq k, d}^c}\,,
\end{align*}
where the equality holds since $\mathcal E_{j, \leq k, \emptyset}$ and ${\mathcal E}_{j, \leq k, d}$ are measurable with respect to $(\sigma^{\Lambda_N, \pm}, \tilde \sigma^{\Lambda_N, \pm})_{V_{j, k}}$, and the inequality is obtained by applying Corollary~\ref{cor-C-*-bound} with $V = V_{j, k}$ (note that $\Lambda_{N/2 - j N^{\alpha'}}  \cap V_{j, k} = \emptyset$ on the event $\mathcal E_{j, \leq k, d}^c$).
Averaging over the conditioning in the preceding display and recalling that $\mathcal E_{j-1, \emptyset} \subset \mathcal E_{j, \leq k, \emptyset} \subset \mathcal E_{j, \emptyset}$ and $\mathcal E_{j, \leq k, d} \subset \mathcal E_{j, d}$, we deduce that
$$\Delta\langle \#( \mathcal C_*^{\Lambda_N} \cap (\Lambda_{N/2 - j N^{\alpha'}} \setminus \Lambda_{N/2 -  (j+1) N^{\alpha'}})) \one_{\mathcal E_{j, \emptyset}^c} \one_{{\mathcal E}_{j, d}^c}\rangle_{\pi_{\Lambda_N}} \leq \langle 8\# A_{j, k} \cdot \one_{\mathcal E_{j-1, \emptyset}^c} \one_{\mathcal E_{j, \leq k, d}^c}\rangle_{\pi_{\Lambda_N}}\,.$$
Since  $\sum_{k=1}^K \# A_{j, k} \cdot \one_{\mathcal E_{j, \leq k, d}^c} \leq \#(\mathcal C_*^{\Lambda_N} \cap (\Lambda_{N/2 - (j-1) N^{\alpha'}} \setminus \Lambda_{N/2 - j N^{\alpha'}}))$, summing the preceding display over $1\leq k\leq K$ yields
 \eqref{eq-m-j-to-prove} (recall that $\Delta K = N^{-\alpha (\alpha')^2}\lfloor N^{\alpha\alpha'} \rfloor  \geq 10^5$ if $N\geq N_0$ for large enough $N_0$). This completes the proof of the lemma.
\end{proof}

\subsection{Proof of Theorem~\ref{thm-main} for $T>0$} \label{sec-proof-main-thm}
We continue to consider $\tilde h^{(N)}$ defined as in \eqref{eq-def-tilde-h-again}, and let $\mu^{\Lambda_N, \pm}, \tilde \mu^{\Lambda_N, \pm}, \pi_{\Lambda_N}$ be defined as in Section~\ref{sec-multi-scale}. For $\delta>0$, let $Q_\delta\subset [-1, 1]$ be the collection of multiples of $\delta$, and for $q\in Q_\delta$ define $\mathcal E_{o, N, q}^*$ to be an event measurable with respect to the Gaussian field by (the tilde symbol only applies on the minus version below)
\begin{equation}\label{eq-def-E*-o-N-pi-q}
\mathcal E_{o,  N,  q}^* =  \{\langle \sigma^{\Lambda_N, +}_o\rangle_{\mu^{\Lambda_N, +}} \geq q + \delta,  \langle \tilde \sigma^{\Lambda_N, -}_o\rangle_{\tilde \mu^{\Lambda_N, -}} \leq q - \delta\}\,.
\end{equation}
By admissibility, on the event $\mathcal E_{o,  N,  q}^*$ we have $\pi_{\Lambda_N}(o\in \mathcal C_*^{\Lambda_N}) \geq \delta$. Combined with Lemma~\ref{lem-bound-m} and \eqref{eq-prob-D-N}, it yields that
\begin{equation}\label{eq-E*-o-N-pi-q-prob}
\P(\mathcal E_{o,  N,  q}^*) = O(N^{-10}/\delta)\,.
\end{equation}
(Throughout, $O(1)$ hides a constant that may depend on $(\epsilon, \beta)$.) Next, we define
\begin{equation}\label{eq-def-E-o-N-pi-q}
\mathcal E_{o,  N,  q} =  \{\langle \sigma^{\Lambda_N, +}_o\rangle_{\mu^{\Lambda_N, +}} \geq q + \delta,  \langle  \sigma^{\Lambda_N, -}_o\rangle_{ \mu^{\Lambda_N, -}} \leq q - \delta\}\,.
\end{equation}
By monotonicity, we thus have
\begin{equation}\label{eq-E-o-N-q-monotonicity}
\mathcal E_{o, N, q} \subset \mathcal E_{o, N', q} \mbox{ and } \mathcal E_{o, N, q}^* \subset \mathcal E_{o, N', q}^* \mbox{ for all } N'\leq N\,.
\end{equation}

\begin{lemma}\label{lem-m-N-bound}
Let $\delta = N^{-3}/3$. There exists $C= C(\epsilon, \beta)>0$ such that  $\P(\mathcal E_{o, N, q}) \leq C N^{-6}$ for all $q\in Q_\delta$.
\end{lemma}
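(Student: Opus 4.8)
The plan is to mimic the structure of the zero-temperature argument, specifically the proof of Lemma~\ref{lem-zero-m-N-bound}, but now tracking the truncated magnetizations $\langle\sigma_o^{\Lambda_N,+}\rangle$ and $\langle\sigma_o^{\Lambda_N,-}\rangle$ rather than the literal spin disagreement of the ground state. First I would reduce to $N$ of the form $4^n$ and introduce the dyadic annuli $\mathfrak{A}_\ell=\Lambda_{4^\ell}\setminus\Lambda_{4^{\ell-1}}$ for $0.9n\le \ell\le n$. For each such $\ell$, consider the perturbation $\tilde h^{(4^\ell)}$ supported on $\mathfrak{A}_\ell$ (really on $\Lambda_{4^\ell}\setminus\Lambda_{4^{\ell-1}}$) with amplitude $\Delta(4^\ell)=(4^{\ell})^{-\alpha(\alpha')^2}$, and the associated event (analogous to $E_\ell$ in the $T=0$ proof) that $o\notin\mathcal C_\star^{\Lambda_{4^\ell}}$ in the relevant sense; Lemma~\ref{lem-bound-m} together with \eqref{eq-prob-D-N} (via \eqref{eq-E*-o-N-pi-q-prob}) gives that the ``bad'' event where the perturbation fails to erase the disagreement at $o$ has probability $O((4^\ell)^{-10}\cdot (4^\ell)^3)=O((4^\ell)^{-7})$, which summed over $0.9n\le\ell\le n$ is $O(N^{-6})$.

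Next I would exploit the Gaussian decomposition exactly as in \eqref{eq-zero-Gaussian-conditioning}: write $h_v = |\mathfrak{A}_\ell|^{-1}h_{\mathfrak{A}_\ell}+g_v$ for $v\in\mathfrak{A}_\ell$, with the $g$-processes in distinct annuli mutually independent and independent of the corresponding $h_{\mathfrak{A}_\ell}$. Let $\mathcal F_\ell=\sigma(h_v:v\in\Lambda_{4^\ell})$ and $\mathcal F'_\ell$ the sub-$\sigma$-field of events in $\mathcal F_\ell$ independent of $h_{\mathfrak{A}_\ell}$. By monotonicity of $\langle\sigma_o^{\Lambda_{4^\ell},+}\rangle_{\mu^{\Lambda_{4^\ell},+}}$ and $\langle\sigma_o^{\Lambda_{4^\ell},-}\rangle_{\mu^{\Lambda_{4^\ell},-}}$ in $h_{\mathfrak{A}_\ell}$, the event $\mathcal E_{o,4^\ell,q}$ conditioned on $\mathcal F'_\ell$ is of the form $\{h_{\mathfrak{A}_\ell}\in I_\ell\}$ for an $\mathcal F'_\ell$-measurable interval $I_\ell$; on the good event (perturbation erases disagreement) one can further restrict to the top sub-interval $I'_\ell$ of length $O(|\mathfrak{A}_\ell|\cdot \Delta(4^\ell))=O(|\mathfrak{A}_\ell|4^{-\ell\alpha(\alpha')^2})$. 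Since $\mathrm{Var}(h_{\mathfrak{A}_\ell})=\epsilon^2|\mathfrak{A}_\ell|$, the Gaussian density bound gives $\P(h_{\mathfrak{A}_\ell}\in I'_\ell\mid\mathcal F'_\ell)\le C 4^{-\ell(\alpha(\alpha')^2-1)}$. Because $\{o\in\mathcal C^{\Lambda_{4^\ell}}\}\cap(\text{good}_\ell)$ is $\mathcal F_\ell$-measurable hence $\mathcal F'_{\ell+1}$-measurable, and $\alpha(\alpha')^2>1$, the telescoping conditioning over $0.9n\le\ell\le n$ shows the intersection over all $\ell$ of these events has probability $\le CN^{-3}\cdot$(something summable), in fact $O(N^{-6})$ after using that there are $\sim 0.1n$ scales each contributing a factor $\le C 4^{-\ell(\alpha(\alpha')^2-1)}$ with $4^\ell\ge N^{0.9}$.

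The main obstacle, and where the positive-temperature proof genuinely departs from $T=0$, is that $\mathcal E_{o,N,q}$ is defined via the \emph{annulus-only} perturbation $\tilde h^{(N)}$ of \eqref{eq-def-tilde-h-again} and the associated adaptive admissible coupling $\pi_{\Lambda_N}$, whereas Lemma~\ref{lem-bound-m} controls $\pi_{\Lambda_N}(o\in\mathcal C_*^{\Lambda_N})$ only on $\mathcal D_N^c$; I need the bound $\P(\mathcal E_{o,N,q})\le CN^{-6}$ to hold for the \emph{un-perturbed} magnetizations, so I must pass from $\mathcal E_{o,N,q}$ to $\mathcal E_{o,N,q}^*$ as in \eqref{eq-def-E*-o-N-pi-q}, \eqref{eq-def-E-o-N-pi-q}. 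The point is that $\mathcal E_{o,N,q}\cap\{\text{perturbation in }\mathfrak{A}_\ell\text{ pushes }o\text{ to agreement}\}$ forces $h_{\mathfrak{A}_\ell}$ into the narrow sub-interval $I'_\ell$, and this ``pushing'' event is exactly what $\mathcal E_{o,N,q}^*$ (or rather its complement) captures via Lemma~\ref{lem-bound-m}; so the bookkeeping has to carefully keep the four families of boundary conditions and external fields coupled through $\pi_{\Lambda_N}$ while still having the clean conditional-interval structure in $h_{\mathfrak{A}_\ell}$. A secondary subtlety is ensuring the independence across scales survives: the adaptive coupling at scale $4^\ell$ uses only $\{h_v:v\in\Lambda_{4^\ell}\}$ and the exploration stays within $\Lambda_{4^\ell}\setminus\Lambda_{4^{\ell}/4}$ off the bad events, so the relevant events at scale $\ell$ are $\mathcal F_\ell$-measurable, which is what makes the telescoping go through. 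Modulo these adaptations the computation is the one in Lemma~\ref{lem-zero-m-N-bound}, and I would state that explicitly rather than repeat it in full.
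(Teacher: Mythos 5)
Your proposal follows essentially the same route as the paper: dyadic annuli $\mathfrak A_\ell$ for $0.9n\le\ell\le n$, the Gaussian decomposition \eqref{eq-Gaussian-conditioning} with the conditional-interval argument, the passage from $\mathcal E_{o,4^\ell,q}$ to $\mathcal E^*_{o,4^\ell,q}$ via Lemma~\ref{lem-bound-m} and \eqref{eq-prob-D-N}, and the telescoping over $\mathcal F'_\ell$-measurable events. The only slip is cosmetic: since $\delta=N^{-3}/3$ is fixed across scales, the per-scale bound from \eqref{eq-E*-o-N-pi-q-prob} is $O((4^\ell)^{-10}N^3)$ rather than $O((4^\ell)^{-7})$, but the sum over $\ell\ge 0.9n$ is still $O(N^{-6})$, so the conclusion is unaffected.
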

\begin{proof}
While the proof of the lemma is similar to that of Lemma~\ref{lem-zero-m-N-bound}, we nevertheless provide a self-contained proof for clarity of exposition.

For $A\subseteq \mathbb Z^2$, we set $h_A = \sum_{v\in A} h_v$.
Without loss of generality, let us only consider  $N = 4^n$ for some $n\geq 1$, and for $1\leq \ell\leq n$, we define $\{\tilde h^{(4^\ell)}_v: v\in \Lambda_{4^\ell}\}$ as in \eqref{eq-def-tilde-h-again}. Write $\mathfrak A_{\ell} = \Lambda_{4^{\ell}} \setminus \Lambda_{4^{\ell-1}}$.
For $0.9 n\leq \ell \leq n$, let $\mathcal F_\ell = \sigma(h_v: v\in \Lambda_{4^\ell})$ and write
\begin{equation}\label{eq-Gaussian-conditioning}
h_v = (\#\mathfrak A_{\ell})^{-1} h_{\mathfrak A_{\ell}} + g_v \mbox{ for } v\in\mathfrak A_{\ell}\,,
\end{equation}
where $\{g_v: v \in \mathfrak A_{\ell}\}$ is a mean-zero Gaussian process independent of $h_{\mathfrak A_{\ell}}$ and $\{g_v: v \in\mathfrak A_{\ell}\}$ for $0.9 n\leq \ell \leq n$ are mutually independent. Let $\mathcal F'_\ell$ be the $\sigma$-field which contains every event in $\mathcal F_{\ell}$ that is independent of $h_{\mathfrak A_{\ell}}$ (so in particular $\mathcal F_\ell \subset \mathcal F'_{\ell +1} \subset \mathcal F_{\ell+1}$). Write $\mathcal E_* = \cup_{0.9n\leq \ell\leq n} \mathcal E^*_{o, 4^\ell, q}$. By monotonicity of $\langle \sigma^{\Lambda_N, +}_o\rangle_{\mu^{\Lambda_N, +}}$ and  $\langle  \sigma^{\Lambda_N, -}_o\rangle_{ \mu^{\Lambda_N, -}}$ with respect to the external field,  there exists an interval $I_\ell$ measurable with respect to $\mathcal F'_\ell$ such that conditioned on $\mathcal F'_\ell$ we have $\mathcal E_{o, 4^{\ell}, q}$ occurs if and only if $h_{\mathfrak A_{\ell}} \in I_\ell$. Let $I'_\ell$ be the maximal sub-interval of $I_\ell$ which shares the upper endpoint and $|I'_\ell| \leq  \frac{\#\mathfrak A_{\ell}}{4^{ \alpha (\alpha')^2 \ell}}$ (here $|I'_\ell|$ denotes the length of the interval $I'_\ell$).
By definition in \eqref{eq-def-E*-o-N-pi-q} and \eqref{eq-def-tilde-h-again}, we see
 from \eqref{eq-Gaussian-conditioning} that  conditioned on $\mathcal F'_\ell$ we have that $\mathcal E_{o, 4^{\ell}, q}\cap (\mathcal E^*_{o, 4^{\ell}, q})^c$ occurs only if $h_{\mathfrak A_{\ell}} \in I'_\ell$. Thus,
$$\P(\mathcal E_{o, 4^{\ell}, q}\cap (\mathcal E^*_{o, 4^{\ell}, q})^c\mid \mathcal F'_\ell) \leq \P(h_{\mathfrak A_{\ell}}\in I'_\ell)\,, \mbox{ for }0.9 n \leq \ell \leq n \,.$$
Combined with the fact that $\var (h_{\mathfrak A_{\ell}}) = \epsilon^2 \#\mathfrak A_{\ell}$, this gives that for $C=C(\epsilon, \beta)>0$ (whose value may be adjusted below)
$$\P(\mathcal E_{o, 4^{\ell}, q}\cap (\mathcal E^*_{o, 4^{\ell}, q})^c\mid \mathcal F'_\ell) \leq \frac{C}{4^{\ell(\alpha (\alpha')^2-1)}}\,.$$
By \eqref{eq-E-o-N-q-monotonicity}, we have $\mathcal E_{o, N, q} \cap \mathcal E_*^c = \cap_{\ell = 0.9n}^{n} (\mathcal E_{o, 4^{\ell}, t} \cap (\mathcal E^*_{o, 4^{\ell}, q})^c)$. Since $(\mathcal E_{o, 4^{\ell}, t} \cap (\mathcal E^*_{o, 4^{\ell}, q})^c)$ is $\mathcal F_\ell$-measurable (and thus is $\mathcal F'_{\ell+1}$-measurable), we deduce that (recalling $\alpha (\alpha')^2 > 1$)
$$\P(\mathcal E_{o, N, q} \cap \mathcal E_*^c ) \leq C N^{-6}\,.$$
By \eqref{eq-E*-o-N-pi-q-prob},  we have $\P( \mathcal E_*) \leq CN^{-6}$. Combined with the preceding display, this completes the proof of the lemma.
\end{proof}
Define $\mathcal E_{o, N}$ to be an event measurable with respect to the Gaussian field by
\begin{equation}\label{eq-def-E-o-N}
\mathcal E_{o,  N} = \{\langle \sigma^{\Lambda_N, +}_o\rangle_{\mu^{\Lambda_N, +}} -  \langle \sigma^{\Lambda_N, -}_o\rangle_{\mu^{\Lambda_N, -}} \geq N^{-3}\}\,.
\end{equation}
Since $\mathcal E_{o, N} \subset \cup_{q\in Q_\delta} \mathcal E_{o, N, q}$ with $\delta = N^{-3}/3$,
we  get from Lemma~\ref{lem-m-N-bound}  that $\P(\mathcal E_{o, N})  = O(N^{-3})$. Thus,
\begin{align}\label{eq-marginal-bound}
\E (\langle \sigma^{\Lambda_N, +}_o\rangle_{\mu^{\Lambda_N, +}} - \langle \sigma^{\Lambda_N, -}_o\rangle_{\mu^{\Lambda_N, -}}) &\leq 2 \P(\mathcal E_{o, N}) + \E (\one_{\mathcal E_{o, N}^c} (\langle \sigma^{\Lambda_N, +}_o\rangle_{\mu^{\Lambda_N, +}} -  \langle \sigma^{\Lambda_N, -}_o\rangle_{\mu^{\Lambda_N, -}})) \nonumber \\
&= O(N^{-3})\,.
\end{align}
\begin{remark}
In Lemma~\ref{lem-m-N-bound}, we work with $\mathcal E_{o, N, q}$ other than $\mathcal E_{o, N}$, for the reason that we do not have the property that $\mathcal E_{o, N}$ occurs if and only if $h_{\mathfrak A_{\ell+1}}$ is in a certain interval (but the property holds for $\mathcal E_{o, N, q}$).
\end{remark}

In order to prove Theorem~\ref{thm-main}, we will consider a monotone coupling of $\mu^{\Lambda_N, \pm}$ and consider $\mathcal C^{\Lambda_N} = \{v\in \Lambda_N: \sigma^{\Lambda_N, +}_v > \sigma^{\Lambda_N, -}_v\}$. We
 wish to have that $\{o\in \mathcal C^{\Lambda_N}\}$ occurs only if $o$ is connected to $\partial \Lambda_N$ in $\mathcal C^{\Lambda_N}$. However, as we have seen in Remark~\ref{remark-sad}, this property does not hold for all monotone couplings of $\mu^{\Lambda_N, \pm}$ (For instance if we build an adaptive admissible coupling by first sampling the spin at $o$ and then the rest of the spins, then it is possible to get a configuration where the spin disagrees at $o$ but there exists a contour surrounding $o$ where all spins agree on this contour). In order to address this issue, we will construct an adaptive admissible coupling $\bar \pi_{\Lambda_N}$ such that this percolation property holds. Our construction is similar to that in Section~\ref{sec-contruction-adaptive-coupling} in a way that we explore $\mathcal C^{\Lambda_N}$ in a breadth first search order. But our construction now is much simpler as we no longer need to consider multiple phases.

By Definition~\ref{def-adaptive-admissible-coupling}, in order to define $\bar \pi_{\Lambda_N}$ we only need to specify the order of vertices in which we sample the spins, as described as follows.  Throughout the procedure, we let $\mathcal C^{\Lambda_N}$ be the collection of vertices $v$ which have been sampled and satisfy $\sigma^{\Lambda_{N}, +}_v > \sigma^{\Lambda_{N}, -}_v$.
We set $A_{0}=\partial  \Lambda_{N}$ and for $k=0, 1,  2, \ldots$, we inductively employ the following procedure (which we refer to as stage).
\begin{itemize}
\item At stage $k+1$, first set $A_{k+1} = \emptyset$. If $A_{k} = \emptyset$, we sample the unexplored vertices in $\Lambda_{N}$ in an (arbitrary) prefixed order and stop our procedure. Otherwise,
 we explore all the unexplored neighbors of $A_{k}$ (in a certain arbitrary prefixed order) and sample the spins at these vertices.
\item  For each newly sampled vertex, if it is in $\mathcal C^{\Lambda_{N}}$ then we add it to $ A_{k+1}$.
\end{itemize}
\begin{lemma}\label{lem-percolate-property}
Under the coupling $\bar \pi_{\Lambda_N}$, $o\in \mathcal C^{\Lambda_N}$ only if $o$ is connected to $\partial \Lambda_N$ in $\mathcal C^{\Lambda_N}$.
\end{lemma}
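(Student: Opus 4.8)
The plan is to exploit the breadth-first structure of the exploration procedure defining $\bar\pi_{\Lambda_N}$, exactly as is done implicitly for the ground-state disagreement set in Section~\ref{sec:zero-temperature}. The key point is that $\bar\pi_{\Lambda_N}$ is an adaptive admissible coupling built by exploring outward from $\partial\Lambda_N$ in ``shells'' $A_0, A_1, A_2, \dots$, where $A_{k+1}$ collects exactly those newly revealed neighbors of $A_k$ that land in $\mathcal C^{\Lambda_N}$. I would first record the structural observation that, by construction, the union $\bigcup_k A_k$ is precisely the set of vertices $v$ that are connected to $\partial\Lambda_N$ within $\mathcal C^{\Lambda_N}$ using only vertices that have been explored; and moreover that every vertex of $\mathcal C^{\Lambda_N}$ which is explored during the procedure lies in some $A_k$. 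So it suffices to show that $o\in\mathcal C^{\Lambda_N}$ forces $o$ to be explored during the procedure, i.e.\ the procedure does not terminate (via some $A_k=\emptyset$) before reaching $o$ while leaving $o$ in $\mathcal C^{\Lambda_N}$.

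The heart of the argument is the following dichotomy for the unexplored region at the moment the procedure halts. Suppose the procedure stops at stage $k+1$ because $A_k=\emptyset$, and let $\mathcal V$ be the (random) set of explored vertices at that point; note $\mathcal V$ is a stopping set in the sense of Definition~\ref{def-adaptive-admissible-coupling}. I claim that on $\partial(\mathcal V^c)$ — i.e.\ on the explored vertices adjacent to the unexplored region — the two spin configurations $\sigma^{\Lambda_N,+}$ and $\sigma^{\Lambda_N,-}$ agree: indeed such a vertex was explored at some earlier stage, was a neighbor of the then-active shell, but was \emph{not} added to any $A_j$, which by the rule ``add to $A_{k+1}$ iff in $\mathcal C^{\Lambda_N}$'' means exactly $\sigma^{\Lambda_N,+}_v=\sigma^{\Lambda_N,-}_v$. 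Now apply Lemma~\ref{lem-adaptive-admissible-coupling}: conditioned on $\mathcal V$ and the spins on $\mathcal V$, the marginals of $\bar\pi_{\Lambda_N}$ on $\mathcal V^c$ are Ising measures on $\mathcal V^c$ with boundary conditions $\sigma^{\Lambda_N,\pm}_{\partial\mathcal V^c}$ and the same external field $\{h_v\}$. Since these two boundary conditions are \emph{identical} (they agree on $\partial\mathcal V^c$) and the external field is identical, the two conditional Ising measures are the same measure; in the monotone coupling $\theta^{(t)}$ used at each remaining step, two identical marginals are coupled to be equal. Hence $\sigma^{\Lambda_N,+}_v=\sigma^{\Lambda_N,-}_v$ for all $v\in\mathcal V^c$, in particular for $v=o$ if $o$ was never explored. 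This yields $o\notin\mathcal C^{\Lambda_N}$, proving the contrapositive.

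Concretely I would organize the write-up as: (i) state that the procedure must terminate, and terminates either by exhausting $\Lambda_N$ (in which case $o$ is explored and the BFS-shell description gives a path in $\mathcal C^{\Lambda_N}$ from $o$ to $\partial\Lambda_N$ directly) or by hitting $A_k=\emptyset$ for some $k$; (ii) in the latter case, identify $\mathcal V=V_{j,k}$ as a stopping set and verify $\sigma^{\Lambda_N,+}=\sigma^{\Lambda_N,-}$ on $\partial\mathcal V^c$ from the exploration rule; (iii) invoke Lemma~\ref{lem-adaptive-admissible-coupling} and the definition of the monotone coupling of two coinciding single-site marginals to conclude $\mathcal C^{\Lambda_N}\cap\mathcal V^c=\emptyset$; (iv) conclude that if $o\in\mathcal C^{\Lambda_N}$ then $o\in\mathcal V$, and then by the shell structure $o\in A_k$ for some $k$, and chaining back through $A_{k-1},\dots,A_0=\partial\Lambda_N$ produces the desired connection in $\mathcal C^{\Lambda_N}$ (each $A_{i}$ vertex has a neighbor in $A_{i-1}$, and all these vertices are in $\mathcal C^{\Lambda_N}$).

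The main obstacle — really the only subtle point — is step (iii): one must be careful that ``same Ising marginals'' genuinely forces equality under $\bar\pi_{\Lambda_N}$, rather than merely equality in distribution. This is where the specific form of the adaptive admissible coupling matters: at each remaining exploration step the single-site conditional laws $\theta_i^{(t)}$ for the $+$ and $-$ versions coincide (since the conditioning data and the Ising specification coincide on the unexplored region), and the monotone coupling $\theta^{(t)}$ of two equal measures on $\{-1,1\}$ is supported on the diagonal; an easy induction on the remaining exploration steps then gives $\sigma^{\Lambda_N,+}_v=\sigma^{\Lambda_N,-}_v$ for every $v\in\mathcal V^c$. One should also note for cleanliness that the BFS shells $A_k$ exhaust the $\mathcal C^{\Lambda_N}$-component of $\partial\Lambda_N$ among explored vertices, which is immediate from the rule but worth stating so that step (iv) is airtight. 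Everything else is bookkeeping about the exploration order, which we may describe in words as the paper does elsewhere.
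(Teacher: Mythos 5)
Your proposal is correct and follows essentially the same route as the paper's proof: case analysis on whether $o$ is explored before the procedure halts, using the BFS-shell structure to produce the path to $\partial\Lambda_N$ when $o$ lies in some $A_k$, and invoking Lemma~\ref{lem-adaptive-admissible-coupling} together with the diagonal support of the monotone coupling of identical single-site marginals to rule out disagreement on the unexplored region. The subtle point you flag in step (iii) is exactly the parenthetical justification given in the paper.
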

\begin{proof}
Let $k_*$ be the first $k$ such that $A_ k = \emptyset$. If $o$ has been explored by the end of Stage~$(k_*-1)$, we see that $o$ is connected to $\partial \Lambda_N$ in $\mathcal C^{\Lambda_N}$. Otherwise, denote $V_{k_*}$ the collection of explored vertices at the end of Stage $(k_*)$. If $o$ was explored in Stage $k_*$, then $o\not\in \mathcal C^{\Lambda_N}$ (since $A_{k_*} = \emptyset$).  If $o$ was not explored by the end of Stage $k^*$, we see that $\sigma^{\Lambda_N, +}$ and $\sigma^{\Lambda_N, -}$ agree on $\partial V_{k_*}^c$, and thus they will have to agree with each other on $V_{k_*}^c$ by Lemma~\ref{lem-adaptive-admissible-coupling} (this is because $\sigma^{\Lambda_N, +}_v$ and $\sigma^{\Lambda_N, -}$ have the same conditional marginal for all $v\in V_{k_*}^c$ and thus have to agree with each other in an admissible coupling). This in particular implies that $o\not\in \mathcal C^{\Lambda_N}$, completing the proof of the lemma.
\end{proof}

\begin{proof}[Proof of Theorem~\ref{thm-main}: $T>0$]
Consider the adaptive admissible coupling $\bar \pi_{\Lambda_N}$. We will use the fact that $ \P\otimes \bar \pi_{\Lambda_N}(v\in \mathcal C^{\Lambda_N}) =  \frac{1}{2}\E(\langle \sigma^{\Lambda_N, +}_v \rangle_{\mu^{\Lambda_N, +}} - \langle \sigma^{\Lambda_N, -}_v \rangle_{\mu^{\Lambda_N,-}}) $ for all $v\in \Lambda_N$.
Let $N_0 = N_0(\epsilon, \beta)$ be chosen later. For any box $B$, recall that $B^{\mathrm{large}}$ is the box  concentric with $B$ of doubled side length. For $B\in \mathcal B(N, N_0)$, we say $B$ is open if $\mathcal C^{\Lambda_N} \cap B \neq \emptyset$. In order to analyze this percolation process, we say a box $B$ is exceptional if $\sum_{v\in B}(\langle \sigma^{B^{\mathrm{large}}, +}_v \rangle_{\mu^{B^{\mathrm{large}}, +}} - \langle \sigma^{B^{\mathrm{large}}, -}_v \rangle_{\mu^{B^{\mathrm{large}},-}}) \geq N_0^{-1/2}$ (so exceptional is a property measurable with respect to $\{h_v: v\in B^{\mathrm{large}}\}$). By  \eqref{eq-marginal-bound} and monotonicity,
$$\P(B \mbox{ is exceptional}) \leq N_0^{1/2} \sum_{v\in B} \E(\langle \sigma^{B^{\mathrm{large}}, +}_v \rangle_{\mu^{B^{\mathrm{large}}, +}} - \langle \sigma^{B^{\mathrm{large}}, -}_v \rangle_{\mu^{B^{\mathrm{large}},-}}) = O(N_0^{-1/2})\,.$$
Recall Definition~\ref{def-percolation-process}. We see that the exceptional boxes on $\mathcal B(N, N_0)$ form a percolation process which satisfies the $(N, N_0, 4, p)$-condition with $p = O(N_0^{-1/2})$.
In addition, for any box $B$ which is not exceptional, denoting by $\mathcal F_B$ the $\sigma$-field generated by spin configurations outside $B^{\mathrm{large}}$, we see from monotonicity that
$$\bar \pi_{\Lambda_N} (B \mbox{ is open } \mid \mathcal F_B) \leq \sum_{v\in B}(\langle \sigma^{B^{\mathrm{large}}, +}_v \rangle_{\mu^{B^{\mathrm{large}}, +}} - \langle \sigma^{B^{\mathrm{large}}, -}_v \rangle_{\mu^{B^{\mathrm{large}},-}}) = O(N_0^{-1/2})\,.$$ Altogether, this implies that the collection of open boxes forms a percolation process which also satisfies the $(N, N_0, 4, p)$-condition with $p = O(N_0^{-1/2})$.
By Lemma~\ref{lem-percolate-property}, in order for $o\in \mathcal C^{\Lambda_N}$, it is necessary that there exists an open lattice animal on $B\in \mathcal B(N, N_0)$ with size at least $\frac{N}{10N_0}$. Now, choosing $N_0$ sufficiently large (so that $p$ is sufficiently small) and applying Lemma~\ref{lem-zero-enhance} yields that $$\P\otimes \bar \pi_{\Lambda_N} (o\in \mathcal C^{\Lambda_N}) \leq c^{-1}e^{-c N} \mbox{ for }c=c(\epsilon, \beta)>0\,,$$ completing the proof of the theorem.
\end{proof}

\noindent {\bf Acknowledgement.} We thank Tom Spencer for introducing the problem to us, thank Steve Lalley for many interesting discussions and thank Subhajit Goswami, Steve Lalley for a careful reading of an earlier version of the manuscript. We also thank Michael Aizenman and Ron Peled for helpful conversations.
We thank two anonymous referees for many helpful suggestions on exposition.

\small

\end{document}